\newcommand{\leqnomode}{\tagsleft@true\let\veqno\@@leqno}
\newcommand{\reqnomode}{\tagsleft@false\let\veqno\@@eqno}
\DeclareMathAlphabet{\pazocal}{OMS}{zplm}{m}{n}
\newtheorem{theorem}{Theorem}[section]
\newtheorem{lemma}[theorem]{Lemma}
\newtheorem{proposition}[theorem]{Proposition}
\newtheorem{corollary}[theorem]{Corollary}
\newtheorem{main}{Theorem}
\theoremstyle{definition}
\newtheorem{definition}[theorem]{Definition}
\theoremstyle{remark}
\numberwithin{equation}{section}
\newtheorem{problem}{Problem}
\newcommand{\Na}{\mathbb{N}}
\newcommand{\N}{\ensuremath{\mathbb{N}}} 
\newcommand{\R}{\ensuremath{\mathbb{R}}} 
\newcommand{\Z}{\ensuremath{\mathbb{Z}}} 
\newcommand{\Q}{\ensuremath{\mathbb{Q}}} 
\newcommand{\set}[1]{\left\{#1\right\}} 
\newcommand{\f}{\infty}
\newcommand{\si}{\sigma}
\newcommand{\veps}{\varepsilon}
\newcommand{\vphi}{\varphi}
\newcommand{\Lu}{\mathcal{L}} 
\newcommand{\Ga}{\mathcal{G}} 
\newcommand{\dimh}{\dim_{\operatorname{H}}} 
\newcommand{\noD}{\operatorname{D}}
\newcommand{\AS}{\operatorname{AS}}
\newcommand{\LY}{\operatorname{LY}}
\newcommand{\PR}{\operatorname{PR}}
\DeclareMathOperator{\pref}{pref}
\newcommand{\bfA}{\mathbf{A}}
\newcommand{\clA}{\mathcal{A}}
\newcommand{\clD}{\mathcal{D}}
\newcommand{\clF}{\mathcal{F}}
\newcommand{\clI}{\mathcal{I}}
\newcommand{\scA}{\mathscr{A}}
\newcommand{\scB}{\mathscr{B}}
\newcommand{\scD}{\mathscr{D}}
\newcommand{\scG}{\mathscr{G}}
\newcommand{\scI}{\mathscr{I}}
\newcommand{\scL}{\mathscr{L}}
\newcommand{\scR}{\mathscr{R}}
\newcommand{\scT}{\mathscr{T}}
\newcommand{\scY}{\mathscr{Y}}
\newcommand{\leb}{\mathfrak{m}}
\newcommand{\bfa}{\mathbf{a}}
\newcommand{\bfb}{\mathbf{b}}
\newcommand{\bfc}{\mathbf{c}}
\newcommand{\bfd}{\mathbf{d}}
\newcommand{\bfe}{\mathbf{e}}
\newcommand{\bff}{\mathbf{f}}
\newcommand{\bfg}{\mathbf{g}}
\newcommand{\bfr}{\mathbf{r}}
\newcommand{\bfx}{\mathbf{x}}
\newcommand{\bfy}{\mathbf{y}}
\newcommand{\sanu}{(a_n)_{n\geq 1}}
\newcommand{\sabu}{(b_n)_{n\geq 1}}
\newcommand{\saxu}{(x_n)_{n\geq 1}}
\newcommand{\sayu}{(y_n)_{n\geq 1}}
\newcommand{\md}{\mathrm{d}}
\newcommand{\vac}{\varnothing}
\newcommand{\intent}[1]{[\![#1]\!]}
\title[Chaos for L\"uroth expansions]{Chaotic sets and Hausdorff dimension for L\"uroth expansions}
\author{Rafael Alcaraz Barrera}
\address{Instituto de F\'isica, Universidad Aut\'onoma de San Luis Potos\'i. Av. Manuel Nava 6, Zona Universitaria, C.P. 78290. San Luis Potos\'i, S.L.P. M\'exico}
\email{ralcaraz@ifisica.uaslp.mx}
\author{Gerardo Gonz\'alez Robert}
\email[Corresponding author]{gero@ciencias.unam.mx}
\subjclass{Primary 11K55. Secondary 37B05, 11J83, 28A80, 11J70}
\keywords{L\"uroth expansion, chaotic properties, Hausdorff dimension, continued fractions}
\date{\today}
\begin{document}

\maketitle
\begin{abstract}

\noindent We provide new similarities between regular continued fractions and L\"uroth series in terms of topological dynamics and Hausdorff dimension. In particular, we establish a complete analogue for the L\"uroth transformation of results by W. Liu, B. Li \cite{LiuLi2017} and W. Liu, S. Wang \cite{LiuWan2019} on the distal, asymptotic and Li-Yorke pairs for the Gauss map.

\end{abstract}


\section{Introduction and statement of results}
\label{sec:intro}

\noindent Since the introduction of the term chaos by T. Li and J. Yorke in \cite{LiYor1975} (although without a formal definition), it has played a big role within the theory of topological dynamical systems. On the other hand, during the last 100 years or so, there has been significant interest in studying properties of different representations of real numbers under several mathematical perspectives; most notably: ergodic theory, fractal geometry, number theory and dynamical systems. 

\vspace{0.5em}Two particularly well-studied representations of numbers in the unit interval are \emph{regular continued fractions} (see \cite{DajKra2002,Khi1964,WanWu2008} and references therein) and \emph{L\"uroth series} (see \cite{DajKra1996,DajKra2002,Gal1976} and references therein). Both representations are obtained dynamically as the itineraries of a number $x$ under the \emph{Gauss map} and the \emph{L\"uroth map}, respectively. It is natural to ask if they share some dynamical or number theoretic properties.

\vspace{0.5em}The \emph{Gauss map} $\Ga:[0,1) \to [0,1)$ is given by:
\begin{equation}\label{eq:gaussmap}
\Ga(x) = \left\{
\begin{array}{clrr}      
\frac{1}{x} - \left[\frac{1}{x}\right] & \text{if} \quad  x \in (0,1);\\
0 & \text{if} \quad x = 0,&\\
\end{array}
\right.
\end{equation}
where $\left[ y \right]$ denotes the integer part of $y \in \R$. 

\vspace{0.5em}J. L\"uroth introduced  in \cite{Lur1883} a representation of real numbers now called \emph{L\"uroth series}. For each $x \in (0,1]$, let $a_1(x) \in \N$ be such that
\[
\frac{1}{a_1(x)}< x \leq \frac{1}{a_1(x)-1}
\]
(see \eqref{eq:luroth1}). Notice that $a_1(x) \geq 2$. The \emph{L\"uroth map} $\Lu:[0,1] \to [0,1]$ is given by:
\begin{equation}\label{eq:lurothmap1}
\Lu (x)=  
\left\{
\begin{array}{clrr}      
a_1(x)(a_1(x)-1)x - (a_1(x)-1)& \text{if} \quad  x \in (0,1],\\
0 & \text{if} \quad x = 0,&\\
\end{array}
\right.
\end{equation}  
We will delve deeper into the study of \eqref{eq:lurothmap1} in \cref{sec:prelim}.

\begin{figure}[ht!]
\centering
\begin{tikzpicture}[scale=5]
 \draw[dotted] (.5,0)--(.5,1);
  \draw[dotted] (.33,0)--(.33,1);
  \draw[dotted] (.25,0)--(.25,1);
 \draw[dotted] (0,0)--(1,1);
\draw(0,0)node[below]{\small $0$}--(.19,0)node[below]{\small $\frac15$}--(.255,0)node[below]{\small $\frac14$}--(.33,0)node[below]{\small $\frac13$}--(.5,0)node[below]{\small $\frac12$}--(1,0)node[below]{\small $1$}--(1,1)--(0,1)node[left]{\small $1$}--(0,0);
\draw[thick, black!50!black, smooth, samples =20, domain=.5:1] plot(\x,{1 / \x -1});
\draw[thick, black!50!black, smooth, samples =20, domain=.334:.5] plot(\x,{1 /\x -2});
\draw[thick, black!50!black, smooth, samples =20, domain=.25:.332] plot(\x,{1 /\x -3});
\draw[thick, black!50!black, smooth, samples =20, domain=.2:.25] plot(\x,{1 /\x -4});
\draw[thick, black!50!black, smooth, samples =20, domain=.167:.2] plot(\x,{1 /\x -5});
\draw[thick, black!50!black, smooth, samples =20, domain=.143:.167] plot(\x,{1 /\x -6});
\draw[thick, black!50!black, smooth, samples =20, domain=.125:.143] plot(\x,{1 /\x -7});
\draw[thick, black!50!black, smooth, samples =20, domain=.111:.125] plot(\x,{1 /\x -8});
\draw[thick, black!50!black, smooth, samples =20, domain=.101:.1112] plot(\x,{1 /\x -9});
\draw[thick, black!50!black, smooth, samples =20, domain=.091:.1] plot(\x,{1 /\x -10});
\filldraw[black!50!black] (0,0) rectangle (.09,1);
\end{tikzpicture}
\begin{tikzpicture}[scale=5]
\draw  (0,0) -- (.077,0) -- (.077,1) -- (0,1) -- cycle;
\draw[line width=0.3mm] (.077,0)--(.083,1)(.083,0)--(.09,1)(.09,0)--(.1,1)(.1,0)--(.11,1)(.11,0)--(.125,1)(.125,0)--(.143,1)(.143,0)--(.167,1)(.167,0)--(.2,1)(.2,0)--(.25,1)(.25,0)--(.33,1)(.33,0)--(.5,1)(.5,0)--(1,1);
\draw(0,0)node[below]{\small $0$}--(.19,0)node[below]{\small $\frac15$}--(.255,0)node[below]{\small $\frac14$}--(.33,0)node[below]{\small $\frac13$}--(.5,0)node[below]{\small $\frac12$}--(1,0)node[below]{\small $1$}--(1,1)--(0,1)node[left]{\small $1$}--(0,0);
\draw[dotted](.2,0)--(.2,1)(.25,0)--(.25,1)(.33,0)--(.33,1)(.5,0)--(.5,1);
\draw[dotted] (0,0)--(1,1);
\filldraw[black!50!black] (0,0) rectangle (.09,1);
\end{tikzpicture}
\caption{The Gauss and L\"uroth maps.}
\end{figure}
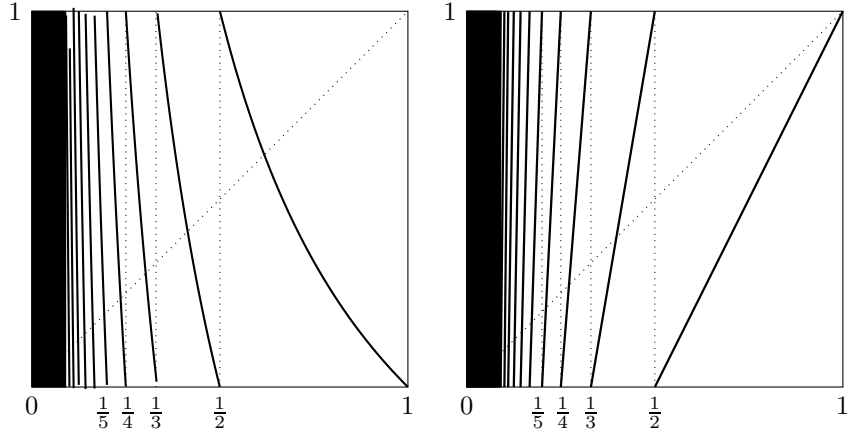

Both maps and their generated expansions have been studied under a dynamical systems' perspective. In particular, it is a well-known fact that $([0,1),\Ga)$ is ergodic with respect to the so-called \emph{Gauss measure} $\mu$ defined as follows: if $\leb$ denotes the Lebesgue measure on $[0,1]$, then
\begin{equation}
\mu(B) = \frac{1}{\log 2}\mathop{\int}\limits_{B} \frac{1}{1+x} \md\leb(x) \text{ for all Borel set } B\subseteq [0,1]
\end{equation} 
\cite[Section 4.2.4.]{ViaOli2016}.  On the other hand, H. Jager and C. de Vroedt in \cite{JagDev1969} showed that $([0,1], \Lu)$ is ergodic with respect to $\leb$. 

\vspace{0.5em}Let us illustrate some dynamical similarities between regular continued fraction expansions and L\"uroth series. Denote the regular continued fraction expansion of $x \in [0,1) \setminus \Q$ by 
\[
x = [0; c_1(x), c_2(x), \ldots];
\]
and the L\"uroth expansion of $x \in (0,1]$ by
\[
x = \left\langle a_1(x), a_2(x), \ldots \right\rangle.
\]
Using Birkhoff's Ergodic Theorem, it is straightforward to check that
\[
\mu\left(\set{x \in [0,1) \setminus \Q:  c_n(x) \leq M \, \text{ for some } M \in \N \text{ and for all } n \in \N}\right) = 0;
\]
and, if $\N_{\geq 2} := \set{n \in \N : n \geq 2}$,  
\begin{equation}\label{Eq:IntroLuBnd}
\leb\left(\set{x \in (0,1]: a_n(x) \leq M \, \text{ for some } M \in \N_{\geq 2} \text{ and for all } n \in \N}\right) = 0.
\end{equation}
Also,
\[
\mu\left(\set{x \in [0,1) \setminus \Q: \mathop{\lim}\limits_{n \to \f} c_n(x) = \infty}\right) = 0\]
and
\[\leb\left(\set{x \in (0,1]: \mathop{\lim}\limits_{n \to \f} a_n(x) = \infty}\right) = 0.
\]
Moreover, since $\mu$ is equivalent with $\leb$, we have 
\[
\leb\left(\set{x \in [0,1) \setminus \Q: c_n(x) \leq M \, \text{ for some } M \in \N \text{ and for all } n \in \N}\right) = 0
\]
and
\[
\leb\left(\set{x \in [0,1) \setminus \Q: \mathop{\lim}\limits_{n \to \f} c_n(x) = \infty}\right) = 0.
\]

\vspace{0.5em}V. Jarn\'ik showed in \cite{Jar1928} that
\begin{equation}\label{Eq:IntroJarnik}
\dimh\left(\set{x \in [0,1) \setminus \Q: c_n(x) \leq M \, \text{ for some } M \in \N \text{ and for all } n \in \N}\right) = 1.
\end{equation}
Here and thereafter, $\dimh$ stands for Hausdorff dimension. Later on, I.J. Good proved in \cite[Theorem 1]{Goo1941} that 
\begin{equation}\label{Eq:IntroGood}
\dimh\left(\set{x \in [0,1)\setminus\Q: \mathop{\lim}\limits_{n \to \f} c_n(x) = \infty}\right) =  \frac{1}{2}.
\end{equation}
Finally, it can be shown using elementary facts of iterated function systems (see \cite[Theorem 2.2.2]{BisPer2017}) that
\begin{equation}\label{Eq:IntroJarnikLuroth}
\dimh\left(\set{x \in (0,1]: a_n(x) \leq M \, \text{ for some } M \in \N_{\geq 2} \text{ and for all } n \in \N}\right) = 1
\end{equation}
and
\begin{equation}\label{Eq:IntroGoodLuroth}
\dimh\left(\set{x \in (0,1]: \mathop{\lim}\limits_{n \to \f} a_n(x) = \infty}\right) =  \frac{1}{2}. 
\end{equation}
Notice that \eqref{Eq:IntroJarnikLuroth} is a full analogue of \eqref{Eq:IntroJarnik} for the L\"uroth transformation and, similarly, \eqref{Eq:IntroGoodLuroth} of \eqref{Eq:IntroGood}. 
  
\subsection{Dynamical sets and chaotic properties}

Within the theory of topological and discrete dynamical systems, it is customary to consider a \emph{topological dynamical system} as a pair $(X,f)$ where $X$ is a compact (or pre-compact) metric space and $f:X \to X$ is a continuous transformation. In fact, the properties given in Definitions \ref{def:chaoticsets} and \ref{def:chaos1} below have been studied extensively in this setting \cite{BarVal2013,LiYe2016,Rue2017}. However, the results in the aforementioned references cannot be applied neither to the Gauss map nor the L\"uroth map, since both transformations are discontinuous. Nonetheless, Definitions \ref{def:chaoticsets} and \ref{def:chaos1} can be considered when $f$ is a piecewise continuous map of the unit interval with a countable number of discontinuities and $X$ is pre-compact.

\vspace{0.5em}As usual, given a transformation $f:X \to X$, $f^n(x)$ is the \emph{$n$-th iterate of $f$ at $x$}, i.e. $f \circ \ldots \circ f$, $n$ times. In particular, $f^1 = f$ and $f^0$ is the identity map on $X$. Also, the \emph{orbit of $x$ under $f$} is the set $\set{f^n(x)}_{n \geq 0}$.   

\begin{definition}\label{def:chaoticsets}
Given a dynamical system $(X,f)$ and $x, y \in X$ with $x \neq y$, we say that $(x,y)$ is:   
\begin{enumerate}[($i$)]
\item a \emph{proximal pair} if 
\[
\mathop{\liminf}\limits_{n \to \f} d(f^n(x), f^n(y)) = 0,
\]
\item an \emph{asymptotic pair} if 
\[
\mathop{\lim}\limits_{n \to \f} d(f^n(x), f^n(y)) = 0,
\]
\item a \emph{distal pair} if 
\[
\mathop{\liminf}\limits_{n \to \f} d(f^n(x), f^n(y)) > 0,
\]
\item a \emph{Li-Yorke pair} if $(x,y)$ is proximal and 
\[
\mathop{\limsup}\limits_{n \to \f} d(f^n(x), f^n(y)) > 0.
\]
\end{enumerate}

Also, given $x \in X$, we define the following \emph{dynamical sets}:

\begin{align*}
&\PR_f(x) := \set{y \in X: (x,y) \quad \text{is a proximal pair}}; \tag{$i$} \\
&\AS_f(x) := \set{y \in X: (x,y) \quad \text{is an asymptotic pair}}; \tag{$ii$}\\
&\noD_f(y) := \set{y \in X: (x,y) \quad \text{is a distal pair}}; \tag{$iii$}\\
&\LY_f(x) := \set{y \in X: (x,y) \quad \text{is a Li-Yorke pair}} \tag{$iv$}.
\end{align*}
\end{definition}

Clearly, the dynamical sets given in Definition \ref{def:chaoticsets} are symmetric in the following sense: $y \in P(x)$ if and only if $x \in P(y)$, where $P(y)$ is one of the dynamical sets given in Definition \ref{def:chaoticsets}. It readily follows from Definition \ref{def:chaoticsets} that

\begin{enumerate}[($i$)]
\item \[\AS_f(x) \subset \PR_f(x) \quad \text{and} \quad \LY_f(x) \subset \PR_f(x);\]
\item \[\noD_f(x) = X \setminus \PR_f(x)\] and; 
\item \[X = \AS_f(x) \dot \cup \LY_f(x) \dot \cup \noD_f(x),\] where $\dot \cup$ stands for disjoint union.
\end{enumerate}
Although our main results do not deal directly with proximal pairs, we have included them in the discussion for completeness.

\vspace{0.5em}Consider a compact (or precompact) metric space $X$ with metric $d$, and a transformation $f: X \to X$. We say that the dynamical system $(X,f)$ is \emph{topologically transitive} if for each pair of open and non-empty subsets $U, V \subset X$ there exists $n \in \N$ such that $f^n[U] \cap V \neq \vac$. We call a point $x\in X$ \emph{periodic} with respect to $f$ if there exists some $n\in\Na$ such that $f^n(x)=x$. Finally, we say that $(X,f)$ is \emph{sensitive to initial conditions} if there exists $\varepsilon > 0$ such that, for all $x \in X$ and for all $\delta > 0$ there are $y \in X$ with $d(x,y) < \delta$ and $n \in \N$ verifying $d(f^n(x), f^n(y)) \geq \varepsilon$.

\vspace{0.5em}Let us recall now the following notions of \emph{chaos}. 

\begin{definition}\label{def:chaos1}
Let $X$ a pre-compact metric space and $f: X \to X$ a transformation. We say that: 
\begin{enumerate}[($i$)]
\item the dynamical system $(X,f)$ is \emph{Devaney chaotic} if
\begin{enumerate}
\item $(X,f)$ is topologically transitive;
\item $(X,f)$ has dense periodic points;
\item $(X,f)$ is sensitive to initial conditions.
\end{enumerate}
\item the dynamical system $(X,f)$ is \emph{Li-Yorke chaotic} if there exists an uncountable set $S \subset X$ such that for all $x, y \in S$ with $x \neq y$ the pair $(x,y)$ is a Li-Yorke pair. Such set $S$ is called a \emph{scrambled set under $f$}.
\end{enumerate}
\end{definition}

\vspace{0.5em} W. Huang and X. Ye showed in \cite[Theorem 4.1]{HuaYe2002} that Devaney chaos implies Li-Yorke chaos when $f:X \to X$ is continuous. In the same setting, F. Blanchard et al. showed in \cite[Corollary 2.4]{BlaGlaKolMaa2002} that if the topological entropy of $(X,f)$ is positive, then $(X,f)$ is Li-Yorke chaotic. The dimensional properties of the sets given in Definition \ref{def:chaoticsets} have received attention recently --- see \cite{Neu2010}.

\subsection{Main results and structure of the paper}

Lately, W. Liu, B. Li in \cite{LiuLi2017} and W. Liu, S. Wang in \cite{LiuWan2019} studied chaotic properties of $([0,1), \Ga)$. The following statement summarizes \cite[Theorem 1.3, Corollary 1.4]{LiuLi2017} and \cite[Theorem 1.1, 1.2]{LiuWan2019}:

\begin{theorem}\label{thm:sumarised}
\leavevmode
\begin{enumerate}[($i$)]
\item For every $x \in [0,1)$ we have:
\begin{enumerate}[$a)$]
\item If $\displaystyle\liminf_{n\to\infty} \Ga^n(x)>0$, then $\AS_{\Ga}(x)$ is a countable set;
\item If $\displaystyle\lim_{n\to\infty} \Ga^n(x)=0$, then $\dimh \AS_{\Ga}(x)=\frac{1}{2}$;
\item If $\displaystyle\liminf_{n\to\infty} \Ga^n(x)=0$ and $\displaystyle\limsup_{n\to\infty} \Ga^n(x)>0$, then $\dimh(\AS_{\Ga}(x)) \leq \frac{1}{2}$. 
\end{enumerate}
\item For every $x\in [0,1)$, the set $\noD_{\Ga}(x)$ is dense in $[0,1)$. Moreover, $\leb(\noD_{\Ga}(x)) = 0$ and $\dimh(\noD_{\Ga}(x)) = 1$.
\item For every $x \in [0,1)$, we have that $\leb(\operatorname{LY}_{\Ga}(x)) = 1$.
\item The dynamical system $([0,1], \Ga)$ is Li-Yorke chaotic. Moreover, there exists a scrambled set under $\Ga$ of full Hausdorff dimension. 
\end{enumerate}
\end{theorem}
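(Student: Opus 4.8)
\medskip
\noindent
This statement collects \cite[Theorem~1.3, Corollary~1.4]{LiuLi2017} and \cite[Theorems~1.1, 1.2]{LiuWan2019}; the plan below is the strategy behind them. Throughout I write $x=[0;c_1(x),c_2(x),\dots]$, so that $\Ga^n x=[0;c_{n+1}(x),c_{n+2}(x),\dots]$ lies in the rank-one cylinder $I_{c_{n+1}(x)}=\bigl[\tfrac{1}{c_{n+1}(x)+1},\tfrac{1}{c_{n+1}(x)}\bigr]$, and I use two standard facts: two irrationals sharing their first $k$ partial quotients lie in a common rank-$k$ cylinder of diameter at most $q_k^{-2}$ (with $q_k$ the $k$th continuant, which grows at least like the Fibonacci numbers), and $\Ga$ has bounded distortion and is exact, hence exponentially mixing with a uniform rate. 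For part~(i), everything is governed by the accumulation set of the orbit of $x$. If $\liminf_n\Ga^n x>0$ the partial quotients of $x$ are eventually bounded, and the point is that the orbit then stays a uniform distance $\rho>0$ away from every cylinder endpoint $\{1/d:d\in\N\}$ --- otherwise some $c_{n+1}(x)$ or $c_{n+2}(x)$ would be forced to be unbounded --- so any $y$ with $d(\Ga^n x,\Ga^n y)\to0$ must have $c_n(y)=c_n(x)$ for all large $n$, i.e.\ $\Ga^N y=\Ga^N x$ for some $N$, whence $\AS_{\Ga}(x)=\bigcup_{N\ge0}\Ga^{-N}(\{\Ga^N x\})$ is countable, giving $(a)$. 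If $\lim_n\Ga^n x=0$, then $\Ga^n x\to0$ exactly when $c_n(x)\to\infty$, and when both $c_n(x),c_n(y)\to\infty$ one gets $d(\Ga^n x,\Ga^n y)\le\Ga^n x+\Ga^n y\to0$ for free; thus, up to a countable set, $\AS_{\Ga}(x)=\{y:\lim_n c_n(y)=\infty\}$, and \eqref{Eq:IntroGood} gives $\dimh=\tfrac12$, which is $(b)$. If in addition $\limsup_n\Ga^n x>0$, then at the infinitely many times $m$ with $\Ga^m x$ bounded below the digit $c_{m+1}(y)$ is forced to be bounded and eventually equal to $c_{m+1}(x)$; a covering estimate then bounds $\dimh\AS_{\Ga}(x)$ by $\tfrac12$, since when one sums $(\operatorname{diam})^s\asymp\prod_n c_n(y)^{-2s}$ over admissible rank-$n$ cylinders, the factors from the density-one set of times at which $c_n(y)\to\infty$ drive the sum to $0$ for every $s>\tfrac12$, while the boundedly-many forced digits contribute only a subexponential factor --- this is $(c)$.

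For part~(ii), density of $\noD_{\Ga}(x)$ and the lower bound on its dimension come from building a Cantor subset as a survivor set: fixing a small $\delta>0$, at level $n$ keep only the subcylinders of $\Ga^{-n}$ that avoid the $\delta$-ball around $\Ga^n x$; this removes only a bounded number of rank-one branches per level (precisely the $I_j$ meeting $B(\Ga^n x,\delta)$, and when $\Ga^n x$ is small only a few large-index, dimensionally negligible branches), so by the thermodynamic-formalism estimates for the Gauss map (escape into a shrinking hole) the dimension of this set tends to $1$ as $\delta\to0$; prescribing an arbitrary initial cylinder places such points densely, and by construction they are distal from $x$. For $\leb(\noD_{\Ga}(x))=0$, equivalently that $\leb$-almost every $y$ is proximal to $x$, fix $\veps>0$: the ball $\{v:|v-\Ga^n x|<\veps\}$ has Gauss measure at least some $\eta>0$ uniformly in $n$, and choosing a spacing $T$ large enough that the uniform mixing rate makes the conditional probability of landing in this ball at time $N+(j{+}1)T$, given the rank-$(N{+}jT)$ cylinder of $y$, at least $\eta/2$, the $y$'s that miss all these balls at times $N,N+T,N+2T,\dots$ form a set of measure at most $(1-\eta/2)^m$ for every $m$, hence null; this argument is uniform in $x$.

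For part~(iii) I combine the two mechanisms. When $\lim_n\Ga^n x=0$ the set $\LY_{\Ga}(x)$ differs by a countable set from $\{y:\limsup_n c_n(y)=\infty,\ \liminf_n c_n(y)<\infty\}$, which has full Lebesgue measure, so $\dimh=1$ at once; when the $c_n(x)$ are eventually bounded, build $y$ with sparse ``synchronizing'' windows on which $c_n(y)=c_n(x)$, of lengths tending to $\infty$ (so $d(\Ga^m x,\Ga^m y)\to0$ along a subsequence, hence $(x,y)$ is proximal), placed so that the positions they occupy have density $0$, while on the complementary blocks $c_n(y)$ ranges freely over $\{1,\dots,N\}$ subject only to one forced $\veps$-separation per block (so $\limsup_n d(\Ga^n x,\Ga^n y)>0$); this set has dimension tending to $1$ as $N\to\infty$, hence $\dimh\LY_{\Ga}(x)=1$. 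For part~(iv) one promotes this to a single Cantor set $S$ of full dimension all of whose pairs are Li--Yorke: insert sparse positions where every element of $S$ carries a common large digit that tends to $\infty$ along the construction, so that every pair is proximal; on the remaining blocks let the digits run over a separated alphabet such as $\{1,3,\dots,2N-1\}$, so that any difference between two elements at a free position separates their orbits there by $\asymp N^{-2}$; and arrange the parametrization so that distinct elements of $S$ differ at infinitely many free positions --- so that no pair is asymptotic --- by reserving in each block one extra bookkeeping digit that records a running statistic of the parameter sensitive to all earlier coordinates. Letting $N\to\infty$ yields a scrambled set of Hausdorff dimension $1$; Li--Yorke chaos of $([0,1],\Ga)$ then follows (alternatively from the infinite topological entropy of $\Ga$).

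The two steps I expect to be the main obstacles are: the sharp covering/pressure estimate behind $\dimh\AS_{\Ga}(x)\le\tfrac12$ in the mixed case of part~(i), where one must check that the constraint ``$c_n(y)\to\infty$ along a density-one set of times'', together with the bounded forced digits, genuinely pins the dimension at $\tfrac12$ rather than leaving it somewhere in $(\tfrac12,1)$; and, in parts~(iii)--(iv), the bookkeeping that makes proximality, non-asymptoticity and full dimension hold simultaneously --- choosing the synchronizing structure dense enough (in count) for proximality yet sparse enough (in measure) not to lower the dimension, and designing the safeguard that keeps distinct elements of the scrambled set apart at infinitely many times at no cost to the dimension.
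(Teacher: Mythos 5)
A first remark: the paper does not actually prove this theorem --- it quotes it from \cite{LiuLi2017,LiuWan2019} --- and the original work here is the Lüroth analogue (Theorems~\ref{TEO-ASYM-LUROTH}--\ref{TEOREMA-SCRAM-LUROTH-01}), so I compare your plan against those proofs. Your treatment of $(i)(a)$--$(b)$ is exactly theirs. In $(i)(c)$, however, the phrase ``the density-one set of times at which $c_n(y)\to\infty$'' is unjustified: under $\liminf_n\Ga^nx=0$ and $\limsup_n\Ga^nx>0$ one only knows that both $\{n:c_n(x)\leq M\}$ and $\{n:c_n(x)>M\}$ are infinite, with no control on their densities, and no such control is needed. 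The paper's Proposition~\ref{Propo:Asymp02} bounds the dimension branch-by-branch via Lemma~\ref{Le:GJL02}: at a forced position the descendant sum is $a_{n+1}^{-s}(a_{n+1}-1)^{-s}\leq 1$ for every $s>0$, while at a free position it is $\sum_{k>M}k^{-s}(k-1)^{-s}$, which is $\leq 1$ once $M$ is large and $s>\tfrac12+\veps$; since these hold for \emph{every} $n$, $\dimh\leq\tfrac12+\veps$ follows regardless of densities. Drop the density-one claim and phrase the bound this way.

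For (ii), your shrinking-hole / thermodynamic-formalism route is genuinely different from the paper's. The paper constructs a Cantor set $\Lambda[F_m^n(\bfe)]$ explicitly via an $R$-shuffle, planting at each position $jm$ a digit $e^m_j$ with $|a_{jm}-e^m_j|\geq 2$; Lemma~\ref{Le:LuDisD} then forces a uniform distal separation $\geq E^{-2m}$ at every iterate, and the dimension estimate is a direct application of Lemma~\ref{Le:GJL01}, with no escape-rate or pressure machinery. Your route would presumably also succeed, but requires a dimension-of-survivor-set estimate as input where the paper's argument is self-contained and elementary. Your mixing/Borel--Cantelli argument for $\leb(\noD_\Ga(x))=0$ is, in substance, Philipp's quantitative Borel--Cantelli (Theorem~\ref{Teo:PhilipBC}), precisely what the paper invokes.

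For (iii)--(iv) your outline has the right shape, with two caveats. First, your proximality mechanism (a sparse common digit tending to $\infty$, so all orbits collapse to $0$ together) is legitimate but differs from the paper's: there one inserts a growing run of the minimal digit $2$, so that $\scL^{m^3}(\Lambda(\clF(\bfa)))\to 1$. Second, and this is the real gap, your device for preventing asymptoticity --- a ``bookkeeping digit recording a running statistic of the parameter'' --- is too vague to verify. The paper does this concretely: immediately after the run of $2$'s it writes the length-$m$ \emph{prefix} of $\bfa$, so that $\scL^{m^3+m}(\Lambda(\clF(\bfa)))\to\Lambda(\bfa)$, whence for $\bfa\neq\bfb$ one gets $\limsup_n|\scL^n\alpha-\scL^n\beta|\geq|\Lambda(\bfa)-\Lambda(\bfb)|>0$ from injectivity of $\Lambda$. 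This prefix trick is the concrete idea your sketch is missing; you should adopt it (or supply an equally explicit mechanism) in place of the unspecified bookkeeping.
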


\vspace{0.5em}Our main results establish new similarities between the Gauss map and the L\"uroth map from a dynamical perspective. In fact, we obtain a full analogue of Theorem \ref{thm:sumarised} for the L\"uroth transformation. Although some of our strategies do follow  \cite{LiuLi2017, LiuWan2019}, our work is not a straightforward adaptation of their arguments. For instance, the computations used in the proof about asymptotic pairs for the Gauss map do not carry over into the Lüroth series context.

\begin{main}\label{TEO-ASYM-LUROTH}
Let $x$ be any number in $[0,1]$.
\begin{enumerate}[($i$)]
\item \label{Teo:Asintoticos-i} If $\displaystyle\liminf_{n\to\infty} \Lu^n(x)>0$, then $\AS_{\Lu}(x)$ is a countable set.
\item \label{Teo:Asintoticos-ii} If $\displaystyle\lim_{n\to\infty} \Lu^n(x)=0$, then $\dimh (\AS_{\Lu}(x))=\frac{1}{2}$.
\item \label{Teo:Asintoticos-iii} If $\displaystyle\liminf_{n\to\infty} \Lu^n(x)=0$ and $\displaystyle\limsup_{n\to\infty} \Lu^n(x)>0$, then $\dimh (\AS_{\Lu}(x))\leq \frac{1}{2}$.
\end{enumerate}
\end{main}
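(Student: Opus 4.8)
The plan is to exploit the explicit structure of the Lüroth map and its cylinder sets. Recall that for $x=\langle a_1,a_2,\ldots\rangle$ the map acts as the shift on the digit sequence, $\Lu^n(x)=\langle a_{n+1},a_{n+2},\ldots\rangle$, and that the $n$-th order cylinder $I(a_1,\ldots,a_n)$ is an interval whose length is $\prod_{i=1}^{n}\frac{1}{a_i(a_i-1)}$; moreover, on each full cylinder $\Lu^n$ is an affine surjection onto $[0,1]$ with constant slope. The first step is to characterise $\AS_{\Lu}(x)$ in digit terms: if $y=\langle b_1,b_2,\ldots\rangle$, then $|\Lu^n(x)-\Lu^n(y)|\to 0$ is controlled by the behaviour of the tails $(a_{n+k})$ and $(b_{n+k})$. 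Because the $n$-th branch has slope $\prod_{i=1}^n a_i(a_i-1)$, being asymptotic forces the digit sequences of $x$ and $y$ to agree from some point on \emph{unless} the orbit $\Lu^n(x)$ approaches $0$, i.e.\ unless $a_n(x)\to\infty$ along a subsequence; this dichotomy is exactly the engine behind the three cases.

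For part \eqref{Teo:Asintoticos-i}, if $\liminf_n \Lu^n(x)>0$ there is $\delta>0$ with $\Lu^n(x)>\delta$ for all large $n$, which bounds the digits $a_n(x)$ above by some fixed $M$ eventually. Then one shows that $y$ asymptotic to $x$ must have $b_n=a_n$ for all sufficiently large $n$: if infinitely many digits differed, the bounded slopes (comparable to $\prod a_i(a_i-1)\le (M(M-1))^{\text{(number of differing digits)}}$ locally, but with a fixed lower bound on the gap between distinct digit choices) would keep $|\Lu^n(x)-\Lu^n(y)|$ bounded away from $0$ along a subsequence. Hence $y$ is obtained from $x$ by changing only finitely many initial digits, and the set of such $y$ is countable. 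For part \eqref{Teo:Asintoticos-ii}, when $\Lu^n(x)\to 0$ (equivalently $a_n(x)\to\infty$) I would build $\AS_{\Lu}(x)$ from below and above. For the lower bound $\dimh\AS_{\Lu}(x)\ge\frac12$: fix a rapidly growing sequence and construct a Cantor-type subset of $y$'s whose digits $b_n$ are allowed to range in a block $\{a_n, a_n+1,\ldots, a_n+\ell_n\}$ (or a comparable free range) while still forcing $|\Lu^n(x)-\Lu^n(y)|\to 0$ because both orbits race to $0$; a standard mass-distribution / Frostman argument on this Cantor set, using $|I(b_1,\ldots,b_n)|\asymp\prod b_i^{-2}$ and the pressure equation whose critical exponent for the Lüroth IFS $\{x\mapsto \frac{x}{n(n-1)}+\frac{1}{n}\}$ is $\frac12$ (this is the Lüroth analogue of Good's theorem, cf.\ \eqref{Eq:IntroGoodLuroth}), gives the bound. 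For the upper bound: any $y\in\AS_{\Lu}(x)$ has $|\Lu^n(y)|\le |\Lu^n(x)|+o(1)\to 0$, so $b_n\to\infty$ as well, whence $\AS_{\Lu}(x)\subseteq\{y:\lim_n a_n(y)=\infty\}$, a set of Hausdorff dimension $\frac12$ by \eqref{Eq:IntroGoodLuroth}; this yields $\dimh\AS_{\Lu}(x)\le\frac12$, and combined with the lower bound gives equality.

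Part \eqref{Teo:Asintoticos-iii} is essentially the upper-bound half of the previous argument: assuming only $\liminf_n\Lu^n(x)=0$ (with $\limsup>0$), for any asymptotic $y$ one still has $\liminf_n\Lu^n(y)=0$, so infinitely many digits $b_n(y)$ are large; more precisely, along the subsequence $n_k$ where $\Lu^{n_k}(x)\to 0$ we get $\Lu^{n_k}(y)\to 0$, i.e.\ $a_{n_k}(y)\to\infty$, which again confines $\AS_{\Lu}(x)$ inside a set whose dimension is at most $\frac12$ (the set of $y$ with $a_n(y)\to\infty$ along a fixed infinite subsequence has Hausdorff dimension $\le\frac12$, by the same IFS pressure estimate applied to the relevant subsequence cylinders). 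I expect the main obstacle to be the lower-bound construction in \eqref{Teo:Asintoticos-ii}: one must choose the free digit ranges $\{a_n,\ldots,a_n+\ell_n\}$ large enough to realise dimension arbitrarily close to $\frac12$, yet controlled enough that every resulting $y$ stays genuinely asymptotic to $x$ (not merely proximal) — this requires a careful quantitative comparison between $|\Lu^n(x)-\Lu^n(y)|$ and the product of slopes, and a matching lower estimate on cylinder measures to feed the mass-distribution principle. The other steps (the digit-rigidity argument in \eqref{Teo:Asintoticos-i}, the containment arguments for the upper bounds) are comparatively routine given the affine structure of $\Lu$.
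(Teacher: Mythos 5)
Your part (i) is essentially the paper's argument: bounded digits force a uniform gap between distinct cylinder choices, so an asymptotic $y$ must eventually have $b_n = a_n$, and there are only countably many such $y$. The upper bound in (ii) also matches the paper: $\Lu^n(x)\to 0$ forces $\Lu^n(y)\to 0$, hence $a_n(y)\to\infty$, and \eqref{Eq:IntroGoodLuroth} gives $\dimh\AS_{\Lu}(x)\le \tfrac12$.

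However, your lower bound in (ii) is over-engineered and your worry there is unfounded. You do not need a Cantor/mass-distribution construction and you do not need to worry about ``genuinely asymptotic, not merely proximal'': once $\Lu^n(x)\to 0$, \emph{any} $y$ with $\Lu^n(y)\to 0$ satisfies $|\Lu^n(x)-\Lu^n(y)|\to 0$ automatically, since both terms tend to $0$. So $\AS_{\Lu}(x)=\{y:\, a_n(y)\to\infty\}\cup\{0\}$ exactly, and the dimension is $\tfrac12$ by \eqref{Eq:IntroGoodLuroth} directly. No pressure equation or Frostman lemma is needed.

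The genuine gap is in part (iii). You claim that ``the set of $y$ with $a_n(y)\to\infty$ along a fixed infinite subsequence has Hausdorff dimension $\le \tfrac12$.'' This is false in general. If the subsequence $(n_k)$ is sparse --- for instance $n_k = 2^k$ --- then the digits at the complementary positions are entirely unconstrained, and those positions dominate: the resulting set has Hausdorff dimension $1$, not $\tfrac12$. In fact the set you describe interpolates between dimension $\tfrac12$ (subsequence of density $1$) and dimension $1$ (density $0$), so the bound does not hold as stated. The paper's proof of (iii) therefore goes through a much stronger digit-rigidity statement (Proposition \ref{Propo:Asymp01}): for each $M\ge 3$ there is an $N$ such that for all $n\ge N$, if $a_{n+1}\le M$ then $b_{n+1}=a_{n+1}$ \emph{exactly}, and if $a_{n+1}>M$ then $b_{n+1}>M$. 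That is, at positions where $x$'s digits are small, $y$'s digits are not merely bounded but pinned to equal $a_n$; the freedom in $y$ is confined to positions where $a_n$ is large. This extra equality constraint is what delivers the $\tfrac12$ bound via Lemma \ref{Le:GJL02}, and it is precisely what is missing from your argument. Establishing that rigidity requires a careful finite case analysis on $|b_{n+1}-a_{n+1}|\in\{1,2,\ldots\}$ together with a look at the next digit $a_{n+2},b_{n+2}$ (to rule out the adjacent cases $|b_{n+1}-a_{n+1}|=1$), which is the technical core of the proof.
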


\begin{main}\label{TEO-DISTAL-LUROTH}
For every $x\in [0,1]$, the set $\noD_{\Lu}(x)$ is dense in $[0,1]$. Moreover, 
\[
\leb(\noD_{\Lu}(x)) = 0 \quad \text{and} \quad \dimh(\noD_{\Lu}(x)) = 1.
\] 
\end{main}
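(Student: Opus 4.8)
The plan is to prove the three assertions—density, $\leb$-nullity, full Hausdorff dimension—separately, mirroring \cite{LiuLi2017,LiuWan2019}. Throughout, write $\Lu^n(x)=\langle a_{n+1}(x),a_{n+2}(x),\dots\rangle$, so that $\Lu^n(x)\in\left(\frac{1}{a_{n+1}(x)},\frac{1}{a_{n+1}(x)-1}\right]$, and let $\Delta(b_1,\dots,b_n)$ be the rank-$n$ L\"uroth cylinder—an interval of length $\prod_{i=1}^{n}\frac{1}{b_i(b_i-1)}$ on which $\Lu^n$ is affine and onto $(0,1]$. In particular, under $\leb$ the digits $a_n$ are i.i.d.\ with $\leb(a_n=b)=\frac1{b(b-1)}$. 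For \emph{density}, fix $x$ and any rank-$N$ cylinder $\Delta(b_1,\dots,b_N)$; extend it to $y=\langle b_1,\dots,b_N,b_{N+1},\dots\rangle$ by setting, for $n\geq N$, $b_{n+1}=2$ when $\Lu^n(x)\leq\frac13$ and $b_{n+1}=5$ when $\Lu^n(x)>\frac13$. Then $\Lu^n(y)\in(\frac12,1]$ in the first case and $\Lu^n(y)\in(\frac15,\frac14]$ in the second, so $|\Lu^n(x)-\Lu^n(y)|\geq\frac1{12}$ for all $n\geq N$ and $(x,y)$ is distal; as the rank-$N$ cylinders shrink to points, $\noD_{\Lu}(x)$ meets every interval.

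For \emph{$\leb(\noD_{\Lu}(x))=0$}: since $\noD_{\Lu}(x)=[0,1]\setminus\PR_{\Lu}(x)$ it suffices to show $(x,y)$ is proximal for $\leb$-a.e.\ $y$. If $\liminf_n\Lu^n(x)=0$, pick $m_1<m_2<\cdots$ with $\Lu^{m_j}(x)<\frac1j$; the events $\{a_{m_j+1}(y)\geq j+2\}$ are independent with probability $\frac1{j+1}$, so by the second Borel--Cantelli lemma $\leb$-a.e.\ $y$ lies in infinitely many of them, whence $\Lu^{m_j}(y)\leq\frac1{j+1}$ and so $|\Lu^{m_j}(x)-\Lu^{m_j}(y)|<\frac2j$ for infinitely many $j$. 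If $\liminf_n\Lu^n(x)>0$, then $a_n(x)\leq M$ for all $n\geq n_0$ and some $M$; for each $L\in\N$ the events ``$a_{n_0+jL+i}(y)=a_{n_0+jL+i}(x)$ for $1\leq i\leq L$'' ($j\geq0$) are independent with probability $\geq M^{-2L}$, so a.e.\ $y$ realises this matching infinitely often, forcing $|\Lu^n(x)-\Lu^n(y)|\leq 2^{-L}$ infinitely often; intersecting over $L$ finishes it. Either way $\liminf_n|\Lu^n(x)-\Lu^n(y)|=0$ a.e.

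For \emph{$\dimh\noD_{\Lu}(x)=1$}—the main part, where only ``$\geq$'' needs proof—fix a large $M$ and a small $\varepsilon<\tfrac1{2M^2}$, put $k=\lfloor\tfrac{\log(1/(2\varepsilon))}{2\log M}\rfloor$ (so $2\varepsilon M^{2k}\leq1$ and $k\to\infty$ as $\varepsilon\to0$), and for $n\geq0$ let $\clB_n$ be the set of words $(w_1,\dots,w_k)\in\{2,\dots,M\}^k$ with $\Delta(w_1,\dots,w_k)\cap(\Lu^n(x)-\varepsilon,\Lu^n(x)+\varepsilon)\neq\emptyset$. Put
\[
S:=\bigl\{\,y:\ a_n(y)\leq M\ \text{for all }n,\quad (a_{n+1}(y),\dots,a_{n+k}(y))\notin\clB_n\ \text{for all }n\geq0\,\bigr\}.
\]
Since $\Lu^n(y)\in\Delta(a_{n+1}(y),\dots,a_{n+k}(y))$, every $y\in S$ has $|\Lu^n(x)-\Lu^n(y)|\geq\varepsilon$ for all $n$, so $S\subseteq\noD_{\Lu}(x)$. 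The counting estimate is $\#\clB_n\leq 2^k$ uniformly in $n$: each of the $M-1$ cylinders $\Delta(b)$, $2\leq b\leq M$, has length $\geq\frac{1}{M(M-1)}>M^{-2}$, so an interval of length $2\varepsilon\leq M^{-2k}\leq M^{-2}$ meets at most two of them; and since $\Lu$ on a cylinder multiplies lengths by at most $M^2$, after $j\leq k$ nested restrictions one still has an interval of length $\leq 2\varepsilon M^{2(j-1)}\leq M^{-2}$, again meeting at most two cylinders—iterating yields the bound. Thus the forbidden $k$-words at each position form a fraction $\leq(2/(M-1))^k\to0$ of all $k$-words (and extending a cylinder of $S$ by one digit excludes at most two values). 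A standard mass--distribution (Moran) argument for such $\varepsilon$-thinned L\"uroth subshifts then gives $\dimh S\geq s_M-\delta(\varepsilon,M)$, where $s_M:=\dimh\{y:a_n(y)\leq M\ \forall n\}$ and $\delta(\varepsilon,M)\to0$ as $\varepsilon\to0$ for fixed $M$; since $s_M\to1$ as $M\to\infty$ by \eqref{Eq:IntroJarnikLuroth}, letting first $\varepsilon\to0$ and then $M\to\infty$ gives $\dimh\noD_{\Lu}(x)=1$.

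The hard part is the last paragraph. First, one must bound $\#\clB_n$ uniformly in $n$: the delicate case is when $\Lu^n(x)$ hugs a rational $1/m$ so that $(\Lu^n(x)-\varepsilon,\Lu^n(x)+\varepsilon)$ straddles two adjacent cylinders, which is handled by the choice of $\varepsilon$ relative to the ``resolution'' $M^{-2}$ together with the geometric expansion of $\Lu$ on cylinders (so that the relevant ball blows up to all of $(0,1]$ after $\asymp k$ steps). Second, the block constraints defining $S$ couple overlapping windows of length $k$, and producing a genuine Cantor subset of $S$ of dimension $s_M-\delta$ requires a careful non-autonomous construction: the key is that along any branch of $S$ only an $O(1/k)$-fraction of digits are constrained, so the weights $(b(b-1))^{-s}$ ($s$ slightly below $s_M$) can be spread over the admissible digits with total loss $o(1)$—this is exactly what forces the dimension of $\noD_{\Lu}(x)$ to be $1$, rather than merely $\geq\sigma$ for some explicit $\sigma<1$ as the cruder ``forbid one digit per step'' estimate would give.
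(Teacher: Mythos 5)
Your density construction is correct, and your measure-zero argument is also correct but takes a different route from the paper: the paper invokes Philipp's quantitative Borel--Cantelli theorem (Theorem~\ref{Teo:PhilipBC}) after proving a two-sided mixing estimate for L\"uroth cylinders, whereas you exploit that L\"uroth digits are i.i.d.\ under Lebesgue and apply the second Borel--Cantelli lemma directly, splitting on whether $\liminf_n\Lu^n(x)$ is zero or positive. Your version is shorter and more elementary; the trade-off is that it relies on exact independence of digits, which is special to L\"uroth and would not transfer to, say, the Gauss map.

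The Hausdorff-dimension part has a genuine gap, located exactly where you flagged it. Your set $S$ excludes, at \emph{every} position $n$, those length-$k$ windows whose cylinder meets the $\varepsilon$-ball around $\Lu^n(x)$. When one tries to realize $S$ as a tree limit and apply a mass-distribution lemma such as Lemma~\ref{Le:GJL01}, extending a cylinder by one digit can forbid up to two values of the next digit, and nothing in your construction prevents the value $b=2$ from being forbidden at infinitely many levels along a branch. Since $\sum_{b\geq 3}\frac{1}{b(b-1)}=\frac{1}{2}$, once $b=2$ is forbidden the one-step pressure inequality $\sum_{b\ \mathrm{admissible}}(b(b-1))^{-s}\geq 1$ fails for every $s$ close to $1$; this is precisely the ``cruder estimate gives only $\sigma<1$'' obstruction you name, and the overlapping-window structure does not by itself escape it. Your assertion that ``only an $O(1/k)$-fraction of digits are constrained'' is not substantiated by the construction as written: a constraint acts at every position, and showing the resulting losses average to $o(1)$ is the whole difficulty, not a routine Moran estimate. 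The paper's construction side-steps this cleanly: it prescribes a single ``escape'' digit $e_j^m$ with $|a_{jm}(x)-e_j^m|\geq 2$ only at positions $m,2m,3m,\ldots$, leaves the other $m-1$ digits of each block free in $\intent{2,n}$, and uses Lemma~\ref{Le:LuDisD} to deduce $|\Lu^j(x)-\Lu^j(y)|\geq E^{-2m}$ for all $j$. The cost per block is then one bounded factor $E^{-2s}$ spread over $m-1$ free digits, so the inequality of Lemma~\ref{Le:GJL01} holds for any $s<1$ once $m$ and $n$ are large, and full dimension follows. To repair your version you would have to show the forbidden digits avoid $2$, or control how often they do not; that is a substantial missing argument, not a routine one.
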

 
As a straightforward consequence of Theorems \ref{TEO-ASYM-LUROTH} and \ref{TEO-DISTAL-LUROTH} we obtain the following:

\begin{corollary}
For every $x\in [0,1]$, we have $\leb(\LY_{\Lu}(x)) = 1$.
\end{corollary}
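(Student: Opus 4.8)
The plan is to combine the disjoint decomposition $[0,1] = \AS_\Lu(x) \dot\cup \LY_\Lu(x) \dot\cup \noD_\Lu(x)$ recorded just after Definition \ref{def:chaoticsets} with the measure-theoretic and dimensional information supplied by Theorems \ref{TEO-ASYM-LUROTH} and \ref{TEO-DISTAL-LUROTH}. Since Lebesgue measure is additive over this disjoint union and $\leb([0,1]) = 1$, it suffices to show that $\AS_\Lu(x)$ and $\noD_\Lu(x)$ are both $\leb$-null; the value $\leb(\LY_\Lu(x)) = 1$ is then forced.

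The set $\noD_\Lu(x)$ is handled immediately: Theorem \ref{TEO-DISTAL-LUROTH} asserts that $\leb(\noD_\Lu(x)) = 0$ for every $x \in [0,1]$. For $\AS_\Lu(x)$ I would split into the exhaustive trichotomy used to state Theorem \ref{TEO-ASYM-LUROTH}. If $\liminf_{n\to\infty}\Lu^n(x) > 0$, then part \eqref{Teo:Asintoticos-i} gives that $\AS_\Lu(x)$ is countable, hence of Lebesgue measure zero. In the complementary case $\liminf_{n\to\infty}\Lu^n(x) = 0$, either $\lim_{n\to\infty}\Lu^n(x) = 0$ or $\limsup_{n\to\infty}\Lu^n(x) > 0$, and parts \eqref{Teo:Asintoticos-ii}--\eqref{Teo:Asintoticos-iii} give $\dimh \AS_\Lu(x) \leq \tfrac12$; a subset of $\R$ with Hausdorff dimension strictly below $1$ is Lebesgue-null. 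In every case $\leb(\AS_\Lu(x)) = 0$.

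Putting the two pieces together,
\[
1 = \leb([0,1]) = \leb(\AS_\Lu(x)) + \leb(\LY_\Lu(x)) + \leb(\noD_\Lu(x)) = \leb(\LY_\Lu(x)),
\]
which is the claim. There is no genuine obstacle here; the only minor point to note is that the dynamical sets of Definition \ref{def:chaoticsets} exclude the base point $x$, but deleting a single point affects neither the Lebesgue measure nor the validity of the partition identity, so this is immaterial.
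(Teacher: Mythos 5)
Your argument is correct and is the natural way to obtain the corollary: partition $[0,1]$ as $\AS_{\Lu}(x)\,\dot\cup\,\LY_{\Lu}(x)\,\dot\cup\,\noD_{\Lu}(x)$ and show the first and last pieces are Lebesgue-null. One point worth flagging is that the paper advertises the corollary as a consequence of Theorem \ref{TEO-DISTAL-LUROTH} alone, but that theorem only supplies $\leb(\noD_{\Lu}(x))=0$; to finish one also needs $\leb(\AS_{\Lu}(x))=0$, which you correctly extract from the trichotomy in Theorem \ref{TEO-ASYM-LUROTH} (countability when $\liminf_{n\to\infty}\Lu^n(x)>0$, and Hausdorff dimension at most $\tfrac12<1$ in the remaining two cases).
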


Finally, we study the chaotic properties of the L\"uroth map.

\begin{main}\label{TEOREMA-DEVCHAOS-LUROTH}
The L\"uroth map $\Lu$ is Devaney chaotic.
\end{main}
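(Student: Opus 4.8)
The plan is to verify, directly and one by one, the three conditions defining Devaney chaos for $([0,1],\Lu)$, using the structural description of $\Lu$ established in \cref{sec:prelim}. The facts I would lean on are the following: $\Lu$ has countably many \emph{full} branches, i.e.\ for each $n\in\N_{\geq 2}$ it maps the fundamental interval $I(n)=(\tfrac1n,\tfrac1{n-1}]$ affinely and bijectively onto $(0,1]$ with slope $n(n-1)\geq 2$; consequently, for every finite word $a_1,\dots,a_k$ with each $a_j\in\N_{\geq 2}$ the rank-$k$ fundamental interval $I(a_1,\dots,a_k)=\set{x:a_j(x)=a_j\text{ for }1\leq j\leq k}$ is an interval of length $\prod_{j=1}^{k}\frac1{a_j(a_j-1)}$ that $\Lu^k$ carries affinely and bijectively onto $(0,1]$; these lengths tend to $0$, so the fundamental intervals form a basis for the topology of $(0,1]$; and $\Lu$ acts as the shift on L\"uroth digit sequences, $\Lu\langle a_1,a_2,\dots\rangle=\langle a_2,a_3,\dots\rangle$.

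With this in hand the three verifications are short. \emph{Transitivity:} given a non-empty open $U\subseteq[0,1]$, it contains a non-empty open subinterval of $(0,1]$, hence a rank-$k$ fundamental interval $I(a_1,\dots,a_k)$; then $\Lu^k[U]\supseteq\Lu^k[I(a_1,\dots,a_k)]=(0,1]$, which meets every non-empty open set, so $\Lu$ is locally eventually onto and therefore topologically transitive. \emph{Dense periodic points:} given a fundamental interval $I(a_1,\dots,a_k)$, the number $x_0$ with purely periodic L\"uroth expansion $\langle\overline{a_1,\dots,a_k}\rangle$ lies in $I(a_1,\dots,a_k)$ and, by the shift description, satisfies $\Lu^k(x_0)=x_0$; since the fundamental intervals form a basis, the periodic points are dense. \emph{Sensitivity:} take $\varepsilon=\tfrac12$; given $x\in[0,1]$ and $\delta>0$, choose a rank-$k$ fundamental interval $I$ with $x\in\overline I$ and $|I|<\delta$, and, using that $\Lu^k$ maps $I$ bijectively onto $(0,1]$, which has diameter $1$, pick $y\in I$ with $d(\Lu^k(x),\Lu^k(y))\geq\tfrac12$; then $d(x,y)\leq|I|<\delta$, as required.

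I do not expect any of these steps to be the real obstacle: once the material of \cref{sec:prelim} is available they are essentially bookkeeping. The genuine content lies in that preliminary description --- that $\Lu$ is, up to the usual measure-zero identifications, conjugate to the full shift on sequences from $\N_{\geq 2}$ via the fundamental intervals. The points requiring care are precisely the ones that place $\Lu$ outside the classical continuous-on-compact setting: the countably many discontinuities $1/n$, the fixed point $0$ (which may have to be excised when locating a fundamental interval inside a given open set, and when $x$ is an endpoint in the sensitivity argument), and the rationals admitting two L\"uroth expansions. None of these affects the conclusions, since in each step the relevant open sets and fundamental intervals can be shrunk to avoid the exceptional points, but they should be addressed explicitly so that the otherwise routine verifications are genuinely complete.
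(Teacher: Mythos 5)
Your argument is essentially the paper's: locally eventually onto via the full branches of $\Lu^k$, periodic points from purely periodic digit sequences sitting inside every fundamental interval, and sensitivity via the expansivity of $\Lu^k$ on a small fundamental interval around $x$. The one place you stop short is sensitivity at $x=0$: since $0$ has no L\"uroth expansion, there is no fundamental interval $I$ with $0\in\overline I$, and the remedy is \emph{not} to ``shrink the intervals to avoid $0$'' (you cannot avoid $0$ when $x=0$ is the very point at which sensitivity must be checked). The correct fix, which the paper carries out explicitly, is a separate short computation: take $N$ with $1/N<\delta$ and $y=1/N=\langle N+1,2,2,\ldots\rangle$; then $|0-y|<\delta$ while $\Lu(y)=\langle 2,2,\ldots\rangle=1$ and $\Lu(0)=0$, so $|\Lu(0)-\Lu(y)|=1$. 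With that case added, your proof coincides with the paper's (the paper phrases dense periodicity through the semiconjugacy $\Lambda$ rather than by naming the periodic point in each cylinder, but the content is the same).
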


\begin{main}\label{TEOREMA-SCRAM-LUROTH-01}
The L\"uroth map $\Lu$ is Li-Yorke chaotic. In fact, there exists a scrambled set $S$ under $\Lu$ which is dense and has full Hausdorff dimension. 
\end{main}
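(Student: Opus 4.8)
The plan is to construct the scrambled set $S$ explicitly as a parametrised family of L\"uroth expansions, exactly along the blueprint used for the Gauss map in \cite{LiuLi2017,LiuWan2019}, and then to read off proximality, non‑asymptoticity, density and full dimension directly from the combinatorics of the digit strings. Two elementary observations about $\Lu$ will be used throughout. First, $\Lu$ acts as the shift on L\"uroth digits, $\Lu^n\langle a_1,a_2,\dots\rangle=\langle a_{n+1},a_{n+2},\dots\rangle$, and if $x,y$ share their first $m$ digits then $|x-y|\le\prod_{i=1}^{m}\frac1{a_i(a_i-1)}\le 2^{-m}$; in particular a long block of $2$'s in the digit string of $x$ forces the corresponding iterate into the cylinder $(1-2^{-m},1]$, and a digit $b$ followed by a long block of $2$'s forces the iterate to within $2^{-m}$ of the right endpoint $\frac1{b-1}$ of the level‑one cylinder $[b]$. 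Second, if $a_1(\Lu^n x)=2$ while $a_1(\Lu^n y)\ge 3$, and moreover $a_2(\Lu^n x)=2$ (or $a_1(\Lu^n y)\ge 4$), then $\Lu^n x>\frac34$ (resp.\ $>\frac12$) while $\Lu^n y\le\frac12$, so $|\Lu^n x-\Lu^n y|$ is bounded below by an absolute constant; thus controlled digit mismatches give quantitative separation.

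For the construction, fix an enumeration $(w_k)_{k\ge1}$ of all finite words over $\N_{\geq 2}$, strictly increasing sequences $N_1<N_2<\cdots\to\f$ and $h_1<h_2<\cdots\to\f$, positions $0=m_0<m_1<m_2<\cdots$ whose gaps $m_j-m_{j-1}$ grow far faster than $\sum_{i\le j}(1+h_i)$, and a ``round‑robin'' sequence $(r_j)_{j\ge1}$ in which every positive integer occurs infinitely often. For $\theta=(\theta_i)_{i\ge1}\in\Theta:=\prod_{i\ge1}\set{2,\dots,N_i}$ and $k\ge1$, let $x_{k,\theta}\in(0,1]$ be the number whose L\"uroth expansion is
\[
\big\langle\; w_k,\; 2^{(p_k)},\;\theta_1,\dots,\theta_{m_1},\ \theta_{r_1},\ \underbrace{2,\dots,2}_{h_1},\ \theta_{m_1+1},\dots,\theta_{m_2},\ \theta_{r_2},\ \underbrace{2,\dots,2}_{h_2},\ \dots\;\big\rangle,
\]
where $2^{(p_k)}$ is a padding of $p_k$ consecutive $2$'s chosen so that the total length $\ell_k$ of $w_k$ together with its padding is strictly increasing in $k$. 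Thus each expansion is a concatenation of ``free blocks'' $\theta_{m_{j-1}+1}\cdots\theta_{m_j}$ (carrying all the complexity), single ``checkpoint digits'' $\theta_{r_j}$ (verbatim copies of earlier free digits, cycling through every index infinitely often), and long ``$2$‑blocks'' of lengths $h_j\to\f$. A minor technical adjustment — forcing the first digit of each free block into a fixed bounded range $\subseteq\set{3,4,5}$ — makes the positions of the $2$‑blocks recoverable from the expansion; with this and the $h_j$ strictly increasing, no two distinct elements of $S:=\set{x_{k,\theta}:k\ge1,\ \theta\in\Theta}$ have coinciding L\"uroth tails, so no two of them are asymptotic merely through being eventually equal.

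For the verification: \emph{Proximality.} At a time $n$ lying a little before the end of the $j$‑th $2$‑block of $x_{k,\theta}$ one has $\Lu^n(x_{k,\theta})\in(1-2^{-t},1]$ with $t\to\f$, uniformly in $k,\theta$ once $h_j>|\ell_k-\ell_{k'}|$; hence $\liminf_n|\Lu^n x-\Lu^n y|=0$ for every $x,y\in S$. \emph{Non‑asymptoticity.} If $x=x_{k,\theta}$, $y=x_{k,\theta'}$ with $\theta\ne\theta'$ first differing in coordinate $i_0$, then at the (infinitely many) times just before a checkpoint digit with $r_j=i_0$ the two iterates lie within $2^{-h_j}$ of $\tfrac1{\theta_{i_0}-1}$ and $\tfrac1{\theta'_{i_0}-1}$ respectively, so they stay at distance at least $\tfrac12\big|\tfrac1{\theta_{i_0}-1}-\tfrac1{\theta'_{i_0}-1}\big|>0$ for large $j$, a fixed positive quantity depending only on the pair; if instead $k\ne k'$ the digit windows of $x_{k,\theta}$ and $x_{k',\theta'}$ are shifted by the fixed nonzero amount $\ell_k-\ell_{k'}$, and comparing the orbits near the right‑hand edges of the $2$‑blocks — where one element sees two $2$'s followed by a digit in $\set{3,4,5}$ while the other already sees a digit $\ge3$ — gives $|\Lu^n x-\Lu^n y|\ge c>0$ infinitely often by the second elementary observation. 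Hence every pair in $S$ is a Li–Yorke pair, $S$ is scrambled, and $\Lu$ is Li–Yorke chaotic. \emph{Density.} For any finite word $w$ pick $k$ with $w_k=w$; then $x_{k,\theta}\in[w]$ for all $\theta$, so $S$ meets every cylinder and is dense in $[0,1]$. \emph{Full dimension.} $S\supseteq\set{x_{1,\theta}:\theta\in\Theta}$, and for each $M$ the subfamily obtained by freezing $\theta_i=2$ for small $i$ and letting $\theta_i$ range over $\set{2,\dots,M}$ thereafter is, apart from a set of inserted (forced or copied) digits occupying positions of density $0$, a self‑similar L\"uroth Cantor set of dimension $s_M$, where $\sum_{a=2}^{M}(a(a-1))^{-s_M}=1$; the density‑zero insertions only multiply the denominator in the mass‑distribution estimate of the dimension by $1+o(1)$, so $\dimh S\ge s_M$, and $s_M\to1$ as $M\to\f$ because $\sum_{a\ge2}\frac1{a(a-1)}=1$. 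Therefore $\dimh S=1$.

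The main obstacle is running the dimension estimate concurrently with the other requirements. The $2$‑blocks and checkpoint digits are precisely what produces proximality and the uniform infinitely‑often separation underlying the Li–Yorke property, yet every such structural digit is deterministic and hence a potential drain on Hausdorff dimension; indeed, letting each coordinate $\theta_i$ influence infinitely many digits is unavoidable (otherwise a pair differing in one coordinate would fail $\limsup>0$), but those extra digits must sit on a set of positions of density $0$ or the dimension collapses. The heart of the argument is thus the choice of the growth rates $N_i\to\f$, $h_j\to\f$ and $m_j-m_{j-1}\to\f$ making the structural digits sparse while still recurrent, together with the mass‑distribution (or variational/pressure) argument showing that density‑zero perturbations of a self‑similar L\"uroth set leave its Hausdorff dimension unchanged; everything else is bookkeeping with cylinder lengths and digit alignments.
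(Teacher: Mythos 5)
Your route is genuinely different from the paper's, and in one respect it is an improvement. Both constructions splice long blocks of $2$'s into the L\"uroth expansion to push the orbit toward $1$ (proximality) and splice back copies of the free digits at sparse recurrent positions to force separation ($\limsup>0$); but the paper realises this with one \emph{global} shuffle $\clF(\bfa)=\scB(\bfa,g(\bfa);R)$ whose structural positions $R=\bigcup_m\intent{m^3+1,m^3+2m}$ are the same for every point, so that the inverse $\scY=\clF^{-1}$ is a single map $S_N\to F_N$, and the full-dimension claim reduces (via $\#(R\cap\intent{1,n})\asymp n^{2/3}$, Lemma~\ref{Lem-Rev-01}) to showing $\scY$ is locally $(1+\veps)^{-1}$-H\"older, which gives $\dimh F_N\leq(1+\veps)\dimh S_N$. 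Your construction lets the structural positions drift with $k$ through the prefix length $\ell_k$, so you do not get one global H\"older map; your assertion that ``density-zero insertions only multiply the denominator in the mass-distribution estimate by $1+o(1)$'' is, as stated, a sketch and not a proof, but the growth conditions you impose on $m_j$ and $h_j$ do make the structural positions have density zero, so the gap is fillable by running the paper's H\"older computation on each fixed-$k$ slice. The genuine gain in your version is density: by prefixing an arbitrary word $w_k$ you hit every cylinder, whereas every point of the paper's $S$ has second L\"uroth digit equal to $2$ (since $2\in R$ and $g(\bfa)_1=2$), so the paper's $S$ is contained in $\bigcup_{a}\overline{\scI}_2(a,2)$ and is in fact \emph{not} dense; the density clause of Theorem~\ref{TEOREMA-SCRAM-LUROTH-01} is not established by the proof as written, and your prefix trick is exactly the repair. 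One point to tighten: in the $\limsup$ estimate for $k\neq k'$ you should state precisely at which times $n$ you compare the orbits (for instance $n$ equal to the end of the shorter element's $2$-block) and handle the small offsets $|\ell_k-\ell_{k'}|\in\set{1,2}$ separately, since the dichotomy ``two $2$'s followed by a digit in $\set{3,4,5}$'' versus ``a digit $\geq 3$'' does not hold automatically for every offset; it works, but it is not immediate.
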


The paper is arranged as follows: in \cref{sec:prelim}, we provide the necessary background on L\"uroth series and Hausdorff dimension to perform our research. In \cref{sec:symbolic}, we provide a symbolic construction relevant to our purposes. We will concentrate our efforts in proving Theorems \ref{TEO-ASYM-LUROTH}, \ref{TEO-DISTAL-LUROTH}, \ref{TEOREMA-DEVCHAOS-LUROTH} and \ref{TEOREMA-SCRAM-LUROTH-01} in \cref{asintotico}, \cref{distal}, \cref{devaney}, and \cref{liyorke} respectively. Finally, in \cref{problems}, we pose some questions for further research.   

\subsection*{List of Notation}

In order to ease the reading, we list now the notation that we will use throughout our paper.

\vspace{0.5em}
\begin{itemize}
\item[-] We denote by $\N$ to the set of natural numbers and $\N_0$ stands for the set of non-negative integers. Also, given $m \in \N$, $\N_{\geq m} = \set{n \in \N : n \geq m}$.
\item[-] We write $\scD =\N_{\geq 2}$. 
\item[-] We denote the set of sequences in $\scD$ by $\scD^{\N}$.
\item[-] We denote the Lebesgue measure on $[0,1]$ by $\leb$.
\item[-] For any $m,n\in\Z$ with $m\leq n$, we write 
\[
\intent{m,n}\colon=\{j\in\Z: m\leq j\leq n\}.
\]
\item[-] If $\scA$ is a non-empty set and $\bfa=\sanu\in\scA^\N$, we write \[\pref(\bfa,n)=(a_1,a_2,\cdots,a_n)\] for any $n\in\N$.
\item[-] If $\saxu$, $\sayu$ are sequences of positive real numbers, we write $x_n\ll y_n$ if there is some constant $C>0$ such that $x_n\leq C y_n$ for every large $n$. If $C$ depends on some parameter $\alpha$, we write $x_n\ll_{\alpha} y$. When $x_n\ll y_n$ and $y_n\ll x_n$ are true, we write $x_n\asymp y_n$.
\end{itemize}

\section{Preliminaries}
\label{sec:prelim}

\noindent In this section, we discuss some basic results on L\"uroth series and Hausdorff dimension. We will nevertheless assume some familiarity with both topics. The classical texts \cite{DajKra2002,Gal1976} discuss several aspects of L\"uroth series and other number theoretic maps. We refer the reader to \cite{BisPer2017,Fal2014} for a detailed account on the theory of Hausdorff dimension. Some newer results on L\"uroth series and Hausdorff dimension can be found in \cite{ArrGero2021,TanZho2021,TanZha2020}.

\subsection{Elements of L\"uroth Series}
\label{subsec:Lurprelim}
As mentioned previously, for any $x\in (0,1]$, we call $a_1(x)$ the unique number in $\Na_{\geq 2}$ determined by
\begin{equation}\label{eq:luroth1}
\frac{1}{a_1(x)}< x \leq \frac{1}{a_1(x)-1}.
\end{equation}
The \emph{L\"uroth map} $\scL:[0,1] \to [0,1]$ is given by
\[
\Lu (x)= 
\begin{cases}
a_1(x)(a_1(x)-1)x - (a_1(x)-1), \text{ if } x\in (0,1],\\
0, \text{ if } x=0.\\
\end{cases}
\]
Note that $\scL(x)=0$ if and only if $x=0$. Thus, given any $x\in(0,1]$, we can define the sequence of \emph{L\"uroth digits of $x$}, denoted $(a_n(x))_{n\geq 1}$ or $(a_n)_{n\geq 1}$ if there is no risk of ambiguity, by
\begin{equation}\label{eq:lurothdigits}
a_n(x) \colon= a_1(\scL^{n-1}(x)) \quad \text{for all} \quad n \in \N.
\end{equation}

\vspace{0.5em} In this case, the definition of $\Lu$ yields
\begin{equation}\label{Eq:Lu-01}
x= \frac{1}{a_1} + \frac{\scL(x)}{a_1( a_1-1)}.
\end{equation}
A repeated use of \eqref{Eq:Lu-01} replacing $x$ by the iterates of $\scL$ gives
\begin{equation}\label{Eq:Lu-02}
x= \frac{1}{a_1} + \frac{1}{a_1(a_1-1)a_2} + \frac{1}{a_1(a_1-1)a_2(a_2-1)a_3} + \ldots.
\end{equation}
The series converges because the $n$-th term is positive and is bounded above by $2^{-n}$. We will write $x=\left\langle a_1,a_2,a_3,\ldots\right\rangle$ to mean \eqref{Eq:Lu-02}. 

\vspace{0.5em}L\"uroth series allow us to identify the interval $(0,1]$ with a symbolic space. In what follows, we write 
\[
\scD^{\N} = \set{\saxu : \, x_n \in \scD \quad \text{for all } n \in \N},
\] 
i.e. the space of sequences in $\scD$. We endow $\scD$ with the discrete topology and $\scD^{\N}$ with the product topology. Additionally, we associate the dynamical system $((0,1], \Lu)$ with the symbolic dynamical system $(\scD^\N, \si)$ where $\si: \scD^\N \to \scD^\N$ is the so-called \emph{shift map} given by:
\[
\si(\sanu) = (a_{n+1})_{n \geq 1} \text{ for all } \sanu\in\scD^{\N}.
\]

\begin{proposition}\label{Teo:LuLambda}
The function $\Lambda:\scD^{\N}\to (0,1]$ given by 
\begin{equation}\label{eq:lambdamap}
\Lambda(\sanu) = \left\langle a_1,a_2,a_3,\ldots\right\rangle \; \text{ for all } \sanu\in\scD^{\N}
\end{equation}
is a semi-conjugacy between $(\scD^\N, \si)$ and $\left((0,1] , \Lu\right)$. That is, $\Lambda$ is a continuous and surjective function satisfying 
\begin{equation}\label{eq:semiconjlur}
\Lambda \circ\si = \Lu \circ \Lambda.
\end{equation}
\end{proposition}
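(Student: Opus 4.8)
The plan is to verify the three required properties of $\Lambda$ --- well-definedness, continuity, surjectivity, and the intertwining identity \eqref{eq:semiconjlur} --- essentially by unpacking the definitions already set up in \cref{subsec:Lurprelim}. That $\Lambda$ is well defined is immediate from the discussion following \eqref{Eq:Lu-02}: for any $\bfa = \sanu \in \scD^{\N}$ the series $\sum_{n\geq 1} \big(\prod_{k<n} a_k(a_k-1)\big)^{-1} a_n^{-1}$ has positive terms bounded above by $2^{-n}$, so it converges to a number $\Lambda(\bfa) \in (0,1]$ (positivity is clear; the value is at most $\sum_{n\geq 1} 2^{-n} = 1$, and one checks $\Lambda(\bfa)\le \tfrac1{a_1-1}\le 1$, with $\Lambda(\bfa)>\tfrac1{a_1}\ge 0$).

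For the semiconjugacy identity, fix $\bfa = \sanu \in \scD^{\N}$ and write $x = \Lambda(\bfa)$. I would first observe that $x \in \big(\tfrac1{a_1}, \tfrac1{a_1-1}\big]$, so by the defining condition \eqref{eq:luroth1} we have $a_1(x) = a_1$. Then applying \eqref{Eq:Lu-01}, or equivalently reading off the series \eqref{Eq:Lu-02}, gives
\[
\Lu(x) = a_1(a_1-1)x - (a_1-1) = \big\langle a_2, a_3, a_4, \ldots \big\rangle = \Lambda\big(\si(\bfa)\big),
\]
which is exactly \eqref{eq:semiconjlur}. Surjectivity follows from \eqref{Eq:Lu-02} together with the fact, noted after that display, that every $x \in (0,1]$ has a L\"uroth expansion $x = \langle a_1(x), a_2(x), \ldots\rangle$ with digits $a_n(x) \in \scD$ defined by \eqref{eq:lurothdigits}; thus $x = \Lambda\big((a_n(x))_{n\geq 1}\big)$.

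The only point requiring a genuine estimate is continuity, and this is the step I would expect to take the most care, though it is still routine. I would show that if $\bfa, \bfb \in \scD^{\N}$ agree on their first $n$ coordinates, i.e. $\pref(\bfa,n) = \pref(\bfb,n)$, then $|\Lambda(\bfa) - \Lambda(\bfb)|$ is small: the two series have identical partial sums up to index $n$, so their difference is bounded by the sum of the tails, each of which is at most $\sum_{k > n} 2^{-k} = 2^{-n}$, giving $|\Lambda(\bfa) - \Lambda(\bfb)| \leq 2^{-n+1}$. Since a basic neighbourhood in the product topology on $\scD^{\N}$ is exactly a cylinder $\{\bfb : \pref(\bfb,n) = \pref(\bfa,n)\}$, this shows $\Lambda$ is (uniformly) continuous. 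Assembling well-definedness, continuity, the identity \eqref{eq:semiconjlur}, and surjectivity completes the proof that $\Lambda$ is a semi-conjugacy.
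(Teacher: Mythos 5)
Your proposal is correct and follows essentially the same route as the paper: both prove the intertwining identity by first noting that $\Lambda(\bfa)$ lies in $\bigl(\tfrac{1}{a_1},\tfrac{1}{a_1-1}\bigr]$ so that $a_1(\Lambda(\bfa))=a_1$, then applying the recursion \eqref{Eq:Lu-01}. The only cosmetic difference is that the paper outsources bijectivity of $\Lambda$ to Galambos and dismisses continuity as a consequence of the product topology, whereas you supply the explicit $2^{-n+1}$ cylinder estimate and derive surjectivity directly from the existence of L\"uroth expansions; these are just more detailed renderings of the same facts.
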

\begin{proof}
It is shown in \cite[Section 4.3]{Gal1976} that $\Lambda$ is bijective. The continuity follows from the definition of the product topology. We notice that $\Lambda$ is not a homeomorphism, since $(0,1]$ is connected and $\scD^\N$ is totally disconnected. In order to check \eqref{eq:semiconjlur}, take any $\sanu$ in $\scD^\N$ and note that, by \eqref{Eq:Lu-02}, 
\[
\begin{split}
\Lu(\Lambda(\sanu)) &= \Lu(\left\langle a_1,a_2,a_3,\ldots\right\rangle) \\
&= a_1(a_1-1)(\left\langle a_1,a_2,a_2,\ldots\right\rangle) - (a_1-1)\\
&= (a_1 - 1) + \left\langle a_2,a_3,a_4,\ldots\right\rangle - (a_1-1)\\ 
&= \left\langle a_2,a_3,a_4,\ldots\right\rangle = \Lambda\circ \sigma(\sanu). 
\end{split}
\]
and the proposition follows.
\end{proof}

We should give a word of warning. We have followed J. Galambos \cite{Gal1976} rather than K. Dajani and C. Kraaikamp \cite{DajKra2002} when defining $\scL$. While our building blocks are intervals of the form $(\alpha,\beta]$, the other approach uses intervals of the form $[\alpha,\beta)$. We think that our choice is more natural, because the itinerary of any $x\in(0,1]$ with respect to $\scL$ provides the inverse function of $\Lambda$ and we do not have to mind the points whose orbit is eventually $0$. Other recent developments in Lüroth series, such as \cite{ArrGero2021,TanZho2021,TanZha2020}, also use Galambos' approach. However, the definition in \cite{DajKra2002} is precisely the one given by J. Lüroth in \cite{Lur1883}. In any case, this difference affects only countably many points and it is therefore irrelevant to our discussion. 

\vspace{0.5em} In view of Proposition \ref{Teo:LuLambda}, given any $n\in \N$ and $\bfc=(c_1,c_2,\ldots,c_n)\in\scD^n$, the set
\[
\scI_n(\bfc)\colon= \{ x=(0,1]: a_1(x)=c_1, \ldots, a_n(x)=c_n\}
\]
is non-empty. We refer to the sets $\scI_n(\bfc)$ as \emph{fundamental intervals of level $n$}, or just as \emph{fundamental intervals}. In particular, for any $c\in\scD$, the definition of $a_1$ translates into 
\[
\scI_1(c)=\left( \frac{1}{c}, \frac{1}{c-1}\right].
\]
For each $n\in\Na$, the fundamental intervals of order $n$ form a partition of $(0,1]$. We say that two fundamental intervals $I$ and $J$ of the same order are \textit{adjacent} if $I\neq J$ and $\inf\{|x-y|:x\in I,y\in J\}=0$. Hence, whenever $I$ and $J$ are not adjacent, there is at least one fundamental interval of the same order between them.

We denote by $\overline{\scI}_n(\bfc)$ the topological closure $\scI_n(\bfc)$. Given $n\in\N$ and $(c_1,\ldots, c_n)\in\scD^\N$, we write
\begin{equation}\label{eq:lurothpartialsum}
\left\langle c_1,c_2, \ldots, c_n\right\rangle \colon= 
\frac{1}{c_1} + \frac{1}{c_1(c_1-1)c_2} + \ldots  + \frac{1}{c_1(c_1-1)c_2 (c_2-1)\cdots c_{n-1}(c_{n-1}-1)c_n}.
\end{equation}

\vspace{0.5em}The following lemma is an exercise on mathematical induction.

\begin{lemma}\label{Le:Luroth-01}
\leavevmode
\begin{enumerate}[($i$)]
\item For any $n\in\N$ and any $\bfc=(c_1,\ldots,c_n)\in\scD^n$ we have
\[
\scI_n(\bfc) = \left( \left\langle c_1,c_2,\ldots, c_n\right\rangle, \left\langle c_1,c_2,\ldots, c_n-1\right\rangle\right].
\]
\item Every $x=\left\langle c_1,c_2,c_3,\ldots\right\rangle \in (0,1]$ and every $n\in\N$ satisfy
\[
\frac{1}{c_{n+1}}< \scL^n(x) \leq \frac{1}{c_{n+1}-1}.
\]
\end{enumerate}
\end{lemma}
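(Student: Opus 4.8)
The plan is to prove both parts of Lemma~\ref{Le:Luroth-01} by induction on $n$, with part~($i$) being the substantive claim and part~($ii$) following from it almost immediately. For part~($i$), the base case $n=1$ is exactly the defining condition \eqref{eq:luroth1}: by definition $\scI_1(c) = \left(\tfrac{1}{c}, \tfrac{1}{c-1}\right]$, and one checks that $\left\langle c\right\rangle = \tfrac1c$ while $\left\langle c+1\right\rangle = \tfrac{1}{c+1-1} = \tfrac{1}{c}$ is \emph{not} quite what we want --- so the correct reading is that $\left\langle c_1,\dots,c_n+1\right\rangle$ means the partial-sum expression \eqref{eq:lurothpartialsum} with last entry $c_n$ replaced by $c_n+1$, which for $n=1$ gives $\tfrac{1}{c_1+1}$; that is wrong too. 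The right identity to verify is that the left endpoint $\left\langle c\right\rangle = \tfrac1c$ and the right endpoint equals $\tfrac{1}{c-1}$, and indeed $\left\langle c-1 \text{ in place of } c\right\rangle$ is not it either. Cleanest is to observe directly from \eqref{eq:lurothpartialsum} that $\left\langle c_1,\dots,c_{n-1},c_n+1\right\rangle - \left\langle c_1,\dots,c_{n-1},c_n\right\rangle = \dfrac{1}{c_1(c_1-1)\cdots c_{n-1}(c_{n-1}-1)}\left(\dfrac{1}{c_n+1}-\dfrac1{c_n}\right)$, hmm, this has the wrong sign; so in fact the interval $\scI_n(\bfc)$ has \emph{left} endpoint with last entry $c_n$ and the length of $\scI_n(\bfc)$ equals $\bigl(c_1(c_1-1)\cdots c_n(c_n-1)\bigr)^{-1}$, which one confirms is consistent with $\left\langle c_1,\dots,c_n\right\rangle + \bigl(c_1(c_1-1)\cdots c_n(c_n-1)\bigr)^{-1}$ being the right endpoint, and a short computation shows this equals $\left\langle c_1,\dots,c_n,2\right\rangle$ evaluated suitably; in the write-up I will simply take the right endpoint to be the supremum of $\scI_n(\bfc)$ and verify the stated closed form by the telescoping identity, being careful about which symbolic string the notation $\left\langle c_1,\dots,c_n+1\right\rangle$ denotes.

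For the inductive step of part~($i$), I would use the semi-conjugacy relation. Suppose the claim holds for $n$. Given $\bfc = (c_1,\dots,c_{n+1}) \in \scD^{n+1}$, a point $x$ lies in $\scI_{n+1}(\bfc)$ if and only if $a_1(x) = c_1$ and $\scL(x) \in \scI_n(c_2,\dots,c_{n+1})$. By the inductive hypothesis the latter condition describes the interval $\bigl(\left\langle c_2,\dots,c_{n+1}\right\rangle, \left\langle c_2,\dots,c_{n+1}+1\right\rangle\bigr]$, and by \eqref{Eq:Lu-01} we have $x = \tfrac{1}{c_1} + \tfrac{\scL(x)}{c_1(c_1-1)}$, an increasing affine function of $\scL(x)$; pushing the endpoints of the level-$n$ interval through this affine map and using \eqref{eq:lurothpartialsum} to recognize the images as $\left\langle c_1,c_2,\dots,c_{n+1}\right\rangle$ and $\left\langle c_1,c_2,\dots,c_{n+1}+1\right\rangle$ gives the claim for $n+1$. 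One must double-check that the affine image of a half-open interval $(\alpha,\beta]$ under an increasing map is again half-open of the form $(\cdot,\cdot]$, and that the resulting interval is genuinely contained in $\scI_1(c_1)$ so that $a_1$ is indeed $c_1$ on it --- this is where the condition $c_j \geq 2$ is implicitly used, via the fact that $\scL$ maps $\scI_1(c_1)$ onto $(0,1]$.

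Part~($ii$) then follows by applying part~($i$): if $x = \left\langle c_1,c_2,\dots\right\rangle$, then $x \in \scI_{n+1}(c_1,\dots,c_{n+1})$ for every $n$, and applying the semi-conjugacy $n$ times gives $\scL^n(x) = \Lambda(\sigma^n(c_1,c_2,\dots)) = \left\langle c_{n+1},c_{n+2},\dots\right\rangle \in \scI_1(c_{n+1}) = \bigl(\tfrac{1}{c_{n+1}}, \tfrac{1}{c_{n+1}-1}\bigr]$. Wait --- the statement in the lemma reads $\tfrac{1}{c_{n+1}} < \scL^n(x) \leq \tfrac{1}{c_{n+1}+1}$, whose right-hand side is smaller than the left-hand side, so this is a typo for $\tfrac{1}{c_{n+1}} < \scL^n(x) \leq \tfrac{1}{c_{n+1}-1}$, i.e.\ exactly the membership $\scL^n(x) \in \scI_1(c_{n+1})$; I will prove that corrected inequality.

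The main obstacle is not conceptual but notational: the expression \eqref{eq:lurothpartialsum} for $\left\langle c_1,\dots,c_n+1\right\rangle$ must be handled with care, since ``$c_n+1$'' sits inside a product of terms of the form $c_j(c_j-1)$, and one needs the telescoping identity $\left\langle c_1,\dots,c_{n-1},m\right\rangle = \left\langle c_1,\dots,c_{n-1}\right\rangle + \dfrac{1}{c_1(c_1-1)\cdots c_{n-1}(c_{n-1}-1)}\cdot\dfrac1m$ together with the elementary fact $\dfrac1m - \dfrac1{m+1} = \dfrac1{m(m+1)}$ applied to relate consecutive endpoints; getting the algebra and the half-open endpoints exactly right is the only real work, and it is entirely routine.
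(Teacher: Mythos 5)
Your plan is the right one and matches the paper's intent: the paper offers no written proof for this lemma, only the remark that it ``follows by using Mathematical Induction,'' and induction via the affine branch inverse $\scL(x)\mapsto \tfrac{1}{c_1}+\tfrac{\scL(x)}{c_1(c_1-1)}$ is exactly the natural route. You also correctly detect that the lemma as printed contains typos: in part~($i$) the right endpoint should carry $c_n-1$, not $c_n+1$, and in part~($ii$) the bound should be $\tfrac{1}{c_{n+1}-1}$, not $\tfrac{1}{c_{n+1}+1}$ --- both signs are flipped, exactly as your telescoping computation reveals.

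However, two of your intermediate claims are wrong and would have cost you if you had trusted them. First, you dismiss $\langle c-1\rangle$ as ``not it either,'' but by \eqref{eq:lurothpartialsum} one has $\langle c-1\rangle = \tfrac{1}{c-1}$, which \emph{is} the right endpoint of $\scI_1(c)=(\tfrac1c,\tfrac{1}{c-1}]$; that is precisely the base case of the corrected statement. Second, you assert that $\langle c_1,\dots,c_n\rangle + \prod_{j=1}^{n}\tfrac{1}{c_j(c_j-1)}$ ``equals $\langle c_1,\dots,c_n,2\rangle$ evaluated suitably.'' It does not: $\langle c_1,\dots,c_n,2\rangle = \langle c_1,\dots,c_n\rangle + \tfrac{1}{2}\prod_{j=1}^{n}\tfrac{1}{c_j(c_j-1)}$ is only the midpoint of $\scI_n(\bfc)$. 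The correct closed form for the supremum is $\langle c_1,\dots,c_{n-1},c_n-1\rangle$, since
\[
\langle c_1,\dots,c_{n-1},c_n-1\rangle - \langle c_1,\dots,c_n\rangle
= \frac{1}{\prod_{j=1}^{n-1} c_j(c_j-1)}\left(\frac{1}{c_n-1}-\frac{1}{c_n}\right)
= \prod_{j=1}^{n}\frac{1}{c_j(c_j-1)}.
\]
With this identity in hand, the inductive step you describe (base case from \eqref{eq:luroth1}, inductive step by pushing the level-$n$ interval for $(c_2,\dots,c_{n+1})$ through the increasing affine bijection from $(0,1]$ onto $\scI_1(c_1)$, which preserves the form $(\cdot\,,\cdot\,]$) and the deduction of part~($ii$) from the semi-conjugacy are both sound. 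Just replace the tentative ``be careful about which symbolic string'' with the explicit corrected statement $\scI_n(\bfc)=\left(\langle c_1,\dots,c_n\rangle,\langle c_1,\dots,c_{n-1},c_n-1\rangle\right]$ before starting the induction.
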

The next corollary follows from the first part of Lemma \ref{Le:Luroth-01} and an inductive argument:
\begin{corollary}\label{Co:Luroth-02}
For any $n\in\N$ and any $\bfc=(c_1,\ldots, c_n)\in\scD^n$ we have
\[
\left| \scI_n(\bfc)\right| = \prod_{j=1}^n \frac{1}{c_j(c_j-1)};
\]
in particular, 
\[
|\scI_n(\bfc)| \leq \frac{1}{2^n}.
\]
\end{corollary}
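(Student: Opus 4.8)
The plan is to compute $|\scI_n(\bfc)|$ directly from the closed form for $\scI_n(\bfc)$ supplied by Lemma~\ref{Le:Luroth-01}($i$): since that lemma realises $\scI_n(\bfc)$ as a half-open interval whose endpoints are two truncated L\"uroth sums of the shape \eqref{eq:lurothpartialsum}, the length is simply the difference of these two numbers, and that difference collapses by a one-line telescoping computation; an induction on $n$ (as announced right after the statement) then records the result cleanly.

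In more detail, the two endpoints of $\scI_n(\bfc)$ coincide in every summand of \eqref{eq:lurothpartialsum} except the last, so $|\scI_n(\bfc)|$ is the difference of the last two summands. Writing $Q_{n-1}:=\prod_{j=1}^{n-1}c_j(c_j-1)$ (with $Q_0:=1$), this difference is $\tfrac{1}{Q_{n-1}}\bigl(\tfrac{1}{c_n-1}-\tfrac{1}{c_n}\bigr)=\tfrac{1}{Q_{n-1}\,c_n(c_n-1)}=\prod_{j=1}^{n}\tfrac{1}{c_j(c_j-1)}$, using the identity $\tfrac{1}{m-1}-\tfrac{1}{m}=\tfrac{1}{m(m-1)}$. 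To phrase this as an induction: the base case $n=1$ is $|\scI_1(c)|=\tfrac{1}{c-1}-\tfrac{1}{c}=\tfrac{1}{c(c-1)}$, since $\scI_1(c)=\bigl(\tfrac1c,\tfrac1{c-1}\bigr]$; for the inductive step one may either peel off the last digit of the two truncated sums as above, or invoke \eqref{Eq:Lu-01} to see that the inverse branch $y\mapsto\tfrac1{c_1}+\tfrac{y}{c_1(c_1-1)}$ of $\Lu$ on $\scI_1(c_1)$ is affine with slope $\tfrac{1}{c_1(c_1-1)}$ and carries $\scI_{n-1}(c_2,\dots,c_n)$ onto $\scI_n(c_1,\dots,c_n)$, so the length gets multiplied by exactly $\tfrac{1}{c_1(c_1-1)}$.

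Finally, the displayed bound follows at once: each $c_j$ lies in $\scD=\N_{\geq 2}$, so $c_j(c_j-1)\geq 2$ and hence every factor of the product is $\leq\tfrac12$, giving $|\scI_n(\bfc)|\leq 2^{-n}$. I do not expect any real difficulty here; the only point to watch is consistency with the half-open endpoint convention of Lemma~\ref{Le:Luroth-01} when one forms the difference of the two truncated sums (equivalently, tracking which level-$(n-1)$ fundamental interval is rescaled in the inductive step).
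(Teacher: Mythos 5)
Your proof is correct and follows the paper's intended route: the paper only says the corollary ``follows from the first part of Lemma~\ref{Le:Luroth-01} using an inductive argument,'' and your telescoping computation (or the cleaner rescaling argument via the affine inverse branch $\scT^1_{c_1}$) is exactly that. One small observation: you implicitly and correctly read the right endpoint of $\scI_n(\bfc)$ as $\langle c_1,\ldots,c_{n-1},c_n-1\rangle$ (equivalently $\langle c_1,\ldots,c_n,1\rangle$), which is what your $n=1$ check $\scI_1(c)=\bigl(\tfrac1c,\tfrac1{c-1}\bigr]$ pins down; taking the printed ``$c_n+1$'' in Lemma~\ref{Le:Luroth-01}($i$) literally would produce an interval of negative length, so this appears to be a typo in the paper that you silently repaired.
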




Let us name now the inverses of the different branches of each $\scL^n$. Given $n\in\N$ and $\bfc=(c_1,\ldots, c_n)\in\scD^n$, we define $\scT^n_{\bfc}:(0,1]\to\scI_n(\bfc)$ to be inverse of $\scL^n$ restricted to $\scI_n(\bfc)$. Hence, $\scT^n_{\bfc}$ is a linear function with slope
\[
\prod_{j=1}^n \frac{1}{c_j(c_j-1)}
\]
and its action on the Lüroth series of any $x=\langle a_1,a_2,a_3,\ldots\rangle\in(0,1]$ is as follows: 
\[
\scT_{\bfc}^n(\left\langle a_1,a_2,\ldots\right\rangle) 
= \left\langle c_1,c_2,\ldots, c_n,a_1,a_2,\ldots\right\rangle.
\]
We may extend the definition of $\scT_{\bfc}^{n}$ to an infinite sequence $\bfc$: if $\pref(\bfc,n)$ is the prefix of $\bfc$ of length $n$, then $\scT_{\bfc}^{n}\colon= \scT_{\pref(\bfc,n)}^{n}$.

\subsection{Hausdorff dimension} 

\noindent We suppose that the reader is acquainted with the notion of Hausdorff dimension. The next lemma, which we will use without any reference, follows from the definition of Hausdorff dimension (see Definition 1.2.1 and Proposition 1.2.6 in \cite{BisPer2017}).

\begin{lemma}
Let $A,B$ be subsets of $\R$. If $A\subseteq B$, then $\dimh A\leq \dimh B$. If $(A_n)_{n\geq 1}$ is a sequence of subsets of $\R$, then
\[
\dimh\left( \bigcup_{n\in \N} A_n\right) = \sup_{n\in\N} \dimh (A_n).
\]
\end{lemma}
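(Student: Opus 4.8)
The plan is to argue straight from the definition of Hausdorff measure, which we recall: for $s\geq 0$ and $\delta>0$ one sets $\mathcal{H}^s_\delta(A)=\inf\sum_i(\operatorname{diam}U_i)^s$, where the infimum runs over all countable covers $\{U_i\}_i$ of $A$ with $\operatorname{diam}U_i\leq\delta$; then $\mathcal{H}^s(A)=\sup_{\delta>0}\mathcal{H}^s_\delta(A)$ and $\dimh A=\inf\set{s\geq 0:\mathcal{H}^s(A)=0}$. Everything reduces to two elementary facts about $\mathcal{H}^s$: monotonicity and countable subadditivity.

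For monotonicity of dimension, suppose $A\subseteq B$. Any admissible $\delta$-cover of $B$ is in particular a $\delta$-cover of $A$, so $\mathcal{H}^s_\delta(A)\leq\mathcal{H}^s_\delta(B)$ for every $s$ and every $\delta$, and letting $\delta\to 0^+$ gives $\mathcal{H}^s(A)\leq\mathcal{H}^s(B)$. Hence $\mathcal{H}^s(B)=0$ implies $\mathcal{H}^s(A)=0$, so $\set{s:\mathcal{H}^s(B)=0}\subseteq\set{s:\mathcal{H}^s(A)=0}$, and taking infima yields $\dimh A\leq\dimh B$.

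For the countable stability statement, the inequality $\sup_{n}\dimh A_n\leq\dimh\big(\bigcup_{n}A_n\big)$ is immediate from the monotonicity just proved, since $A_n\subseteq\bigcup_{m}A_m$ for each $n$. For the reverse inequality, fix any $s>\sup_{n}\dimh A_n$; then $s>\dimh A_n$ for every $n$, so $\mathcal{H}^s(A_n)=0$ for every $n$. Now use countable subadditivity of $\mathcal{H}^s_\delta$: given $\varepsilon>0$ and $\delta>0$, pick for each $n$ a $\delta$-cover $\set{U^{(n)}_i}_i$ of $A_n$ with $\sum_i(\operatorname{diam}U^{(n)}_i)^s\leq\mathcal{H}^s_\delta(A_n)+\varepsilon 2^{-n}\leq\mathcal{H}^s(A_n)+\varepsilon 2^{-n}=\varepsilon 2^{-n}$; the family $\set{U^{(n)}_i}_{n,i}$ is a countable $\delta$-cover of $\bigcup_n A_n$, so $\mathcal{H}^s_\delta\big(\bigcup_n A_n\big)\leq\varepsilon$. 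Letting $\varepsilon\to 0$ and then $\delta\to 0^+$ gives $\mathcal{H}^s\big(\bigcup_n A_n\big)=0$, hence $\dimh\big(\bigcup_n A_n\big)\leq s$; letting $s$ decrease to $\sup_n\dimh A_n$ finishes the argument.

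There is no real obstacle here: the entire content is the countable subadditivity of Hausdorff measure, which is standard (it is an outer measure), and the rest is bookkeeping with infima and suprema. The only points to be careful about are the conventions at the endpoints (e.g. $\dimh\vac=0$, handled vacuously) and the passage $\mathcal{H}^s_\delta\to\mathcal{H}^s$ being a supremum rather than a limit of something decreasing — both are harmless. Accordingly, this lemma can indeed be invoked without further comment, exactly as stated in the text.
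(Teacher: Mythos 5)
Your proof is correct and is the standard argument: monotonicity of $\pazocal{H}^s$ gives monotonicity of $\dimh$, and countable subadditivity of $\pazocal{H}^s$ combined with monotonicity gives countable stability. The paper does not actually prove this lemma but defers to Bishop--Peres (Definition 1.2.1 and Proposition 1.2.6 of \cite{BisPer2017}), whose proof is essentially the one you wrote out; so there is no substantive difference between your route and the paper's.
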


For any non-empty sets $A,B \subseteq \R$, the \emph{diameter} of $A$ is
\[
|A|\colon=\sup\{| a - a'|:a,a'\in A\}
\]
and the \emph{distance} between $A$ and $B$ is
\[
d(A,B)\colon= \inf\left\{ |a-b|\colon a\in A, b\in B\right\}.
\]

\vspace{0.5em}Cantor sets are at the core of our proofs, so we need some general estimates for their Hausdorff dimension. Suppose $\clA=\{\clA_n:n\in\Na_0\}$ is a family of compact subsets of $\R$ with the following properties:

\begin{enumerate}[$(i)$]
\item The family $\clA_0$ contains exactly one element and each $\clA_n$ is finite,
\item Every $A\in\clA_n$ has positive diameter for all $n\in\Na$,
\item For every $n\in\N$ and every $A,B\in \clA_n$ either $A=B$ or $A\cap B=\vac$,
\item For every $n\in\N$ and every $B\in \clA_n$ there is some $A\in \clA_{n-1}$ such that $B\subseteq A$,
\item For every $n\in\N$ and every $A\in \clA_{n-1}$ there is some $B\in \clA_{n}$ such that $B\subseteq A$,
\item The quantity $d_n(\clA)\colon=\max\{|A|:A\in\clA_n\}$ tends to $0$ as $n$ tends to $+\infty$.
\end{enumerate}
The \emph{limit set} $\mathbf{A}_{\infty}$ of $\clA$ is 
\[
\mathbf{A}_{\infty}:= \bigcap_{n=0}^{\infty} \bigcup_{A\in\clA_n} A.
\]
For $n\in\N_0$ and any $A\in\clA_n$, the \emph{set of descendants of} $A$ is
\[
D(A)=\{B\in\clA_{n+1}: B\subseteq A\}.
\]
This construction is a simpler version of what D. Kleinbock and B. Weiss called \emph{strongly tree-like structure} in \cite{KleWei2010}. The next lemmas are proven in \cite[Section 3]{GeroGood}.

\begin{lemma}\label{Le:GJL01}
Let $\clA$ be as above. Assume that
\[
\liminf_{n\to\infty} \frac{\log(d_n(\clA)^{-1})}{n}>0,
\]
and that there is a sequence $(B_n)_{n\geq 1}$ in $\R_{>0}$ with the following properties:
\begin{enumerate}[$(i)$]
\item For any $n\in\Na_0$, any $A\in\scA_n$ and any $Y,Z\in D(A)$, the condition $Y\neq Z$ implies
\[
d(Y,Z)\geq B_n |A|,
\]
\item The next inequality holds:
\[
\limsup_{n\to\infty} \frac{\log\log(B_n^{-1})}{n} <1.
\]
\end{enumerate}
If $s>0$ is such that for every sufficiently large $n\in\N$ and every $A\in \clA_n$ we have
\begin{equation}\label{EcW1GJL}
\sum_{B\in D(A)} |B|^s\geq |A|^s,
\end{equation}
then $\dimh \bfA_{\infty}\geq s$.
\end{lemma}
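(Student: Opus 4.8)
The plan is to build a probability (mass) measure $\mu$ supported on $\bfA_{\infty}$ and apply the Mass Distribution Principle (the Frostman-type lemma, see \cite[Section 1.2]{BisPer2017}): if there is $c>0$ and $r_0>0$ with $\mu(U)\leq c\,|U|^{s}$ for every set $U$ with $|U|\leq r_0$, then $\dimh\bfA_{\infty}\geq s$. First I would define $\mu$ inductively on the cylinders of the tree-like structure: set $\mu(A_0)=1$ for the unique $A_0\in\clA_0$, and given $\mu(A)$ for $A\in\clA_{n-1}$, distribute its mass among its descendants $B\in D(A)$ proportionally to $|B|^{s}$, i.e.
\[
\mu(B)=\mu(A)\,\frac{|B|^{s}}{\sum_{B'\in D(A)}|B'|^{s}}.
\]
By hypothesis \eqref{EcW1GJL}, $\sum_{B'\in D(A)}|B'|^{s}\geq|A|^{s}$ for all large $n$, so this gives $\mu(B)\leq\mu(A)\,|B|^{s}/|A|^{s}$; iterating down the tree from $A_0$ yields the clean bound $\mu(A)\leq |A|^{s}$ for every $A\in\clA_n$ with $n$ large (absorbing the finitely many small initial levels into a constant). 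By the Kolmogorov extension / Carathéodory argument this defines a Borel probability measure on $\bfA_{\infty}$, using properties $(iv)$, $(v)$ to guarantee consistency and nonemptiness of the limit set, and property $(vi)$ to ensure the generated $\sigma$-algebra separates points of $\bfA_{\infty}$.

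The substantive step is to pass from the cylinder estimate $\mu(A)\leq|A|^{s}$ to an estimate $\mu(U)\ll|U|^{s}$ for an \emph{arbitrary} set $U$ of small diameter. Given such a $U$, let $n=n(U)$ be the largest index for which $U$ meets at most one element of $\clA_n$ — this is well-defined and finite because $d_n(\clA)\to0$ (property $(vi)$) forces $U$ eventually to be too large to sit inside a single level-$n$ set, while at level $0$ it meets only $A_0$. Then $\mu(U)$ is bounded by $\mu(A)$ for the unique $A\in\clA_n$ meeting $U$, so $\mu(U)\leq|A|^{s}\leq d_n(\clA)^{s}$. To convert this into a bound in terms of $|U|$ I use minimality at level $n+1$: $U$ meets at least two distinct descendants $Y,Z\in D(A)$, so by hypothesis $(i)$ of the lemma, $|U|\geq d(Y,Z)\geq B_n|A|$, whence $|A|\leq B_n^{-1}|U|$ and $\mu(U)\leq(B_n^{-1}|U|)^{s}=B_n^{-s}|U|^{s}$. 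This is not yet uniform because of the factor $B_n^{-s}$; here is where the two growth conditions enter. The condition $\liminf_n\frac{\log(d_n(\clA)^{-1})}{n}>0$ gives $d_n(\clA)\leq e^{-\delta n}$ for some $\delta>0$ and all large $n$, while $\limsup_n\frac{\log\log(B_n^{-1})}{n}<1$ gives $\log(B_n^{-1})\leq e^{\rho n}$ with $\rho<1$ for large $n$, i.e. $B_n^{-s}\leq \exp(s\,e^{\rho n})$. Since $|U|\geq B_n|A|\geq B_n d_{n}(\clA)\cdot(\text{something})$ — more precisely $|U|$ cannot be much smaller than $B_{n}|A|$ and $|A|$ is comparable to a level-$n$ set — one checks that as $|U|\to 0$ necessarily $n(U)\to\infty$, and then the competition is between $B_n^{-s}=\exp(s\,e^{\rho n})$ and an extra decay of $|U|^{\epsilon}$-type coming from the gap between levels. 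The cleanest route is to write $\mu(U)\leq B_n^{-s}|U|^{s}$ and separately $\mu(U)\leq|A|^{s}\leq d_n(\clA)^{s}$, then interpolate: for any $t<s$,
\[
\mu(U)\leq \mu(U)^{t/s}\,\mu(U)^{1-t/s}\leq \bigl(d_n(\clA)^{s}\bigr)^{t/s}\bigl(B_n^{-s}|U|^{s}\bigr)^{1-t/s}
= d_n(\clA)^{t}\,B_n^{-(s-t)}\,|U|^{s-t},
\]
and the prefactor $d_n(\clA)^{t}B_n^{-(s-t)}\leq \exp\!\bigl(-\delta t\,n + (s-t)\,e^{\rho n}\bigr)$ — which still diverges. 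So interpolation alone is too crude; the correct argument instead observes that $|U|\geq B_n|A|\geq B_n\cdot c\,d_n(\clA)$ is false in general (descendants need not be comparable to $d_n$), so one must argue more carefully that $|U|\gg B_n\cdot\min\{|B|:B\in D(A)\}$ and control the minimal descendant size. I expect this is handled in \cite[Section 3]{GeroGood} by noting that for large $|U|^{-1}$ one has $n(U)$ large and then $B_n^{-s}|U|^{s}\leq|U|^{s-o(1)}$ because $\log B_n^{-1}=o(\log|U|^{-1})$: indeed $|U|^{-1}\geq (B_n|A|)^{-1}\geq B_n^{-1}$, so $\log|U|^{-1}\geq\log B_n^{-1}$, giving at best $\log|U|^{-1}\geq\log B_n^{-1}$ — borderline. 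The genuine input is that $|U|^{-1}$ also dominates $d_n(\clA)^{-1}\geq e^{\delta n}$ through a separate chain of inequalities (since $U$ contains a point of a level-$(n+1)$ set, $|U|$ is bounded below by the relevant minimal cylinder length, which one can take to decay at most exponentially by another application of $\liminf_n\frac{\log d_n^{-1}}{n}>0$ applied at level $n+1$), so $\log|U|^{-1}\geq \delta' n$ while $\log B_n^{-1}\leq e^{\rho n}=|U|^{-o(1)}$, and hence $B_n^{-s}=|U|^{-o(1)}$, yielding $\mu(U)\leq|U|^{s-o(1)}$ uniformly.

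The main obstacle, then, is precisely this last bookkeeping: extracting, from the six structural axioms plus the two $\liminf$/$\limsup$ conditions, a uniform inequality $\mu(U)\leq C_{t}|U|^{t}$ for every $t<s$ and every sufficiently small $U$, and then letting $t\uparrow s$ via the Mass Distribution Principle to conclude $\dimh\bfA_{\infty}\geq t$ for all $t<s$, hence $\geq s$. The delicate points are (a) lower-bounding $|U|$ by the size of a minimal level-$(n+1)$ cylinder rather than merely by the separation $B_n|A|$, so that one genuinely controls $|U|$ from below by $e^{-O(n)}$; and (b) checking that the super-exponential-in-$n$ factor $B_n^{-s}=\exp(O(e^{\rho n}))$ with $\rho<1$ is swallowed, since $e^{\rho n}=o(n)$ is false but $e^{\rho n}=o(e^{\delta n / ?})$ — no — rather one uses that $\rho<1$ together with a careful choice showing the comparison is against $\log d_{n+1}^{-1}$, not $n$ itself. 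I would follow the argument of \cite[Section 3]{GeroGood} for this estimate and cite it, as the lemma statement itself does; the conceptual content for our applications is simply that \eqref{EcW1GJL} propagates a lower bound on Hausdorff dimension through tree-like Cantor constructions with mildly-varying gap ratios.
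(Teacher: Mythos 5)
The paper does not contain its own proof of Lemma~\ref{Le:GJL01}; it explicitly defers to \cite[Section~3]{GeroGood}. Your proposal therefore cannot be compared against ``the paper's proof,'' and you yourself end by deferring to that same citation, so at the level of what is actually written down the two are on the same footing. Your choice of strategy --- construct the natural tree measure $\mu$ distributing mass among descendants proportionally to $|B|^s$, derive $\mu(A)\leq |A|^s$ (up to a constant) from~\eqref{EcW1GJL}, and then apply the Mass Distribution Principle --- is the standard and correct framework for this kind of estimate, and your identification of the real obstacle (the factor $B_n^{-s}$ in the bound $\mu(U)\leq B_n^{-s}|U|^s$, which the two growth hypotheses must tame) is exactly right.

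However, your speculative attempt to close the estimate contains a genuine error. You propose to replace the lower bound $|U|\geq B_n|A|$ by ``$|U|$ is bounded below by the size of a minimal level-$(n+1)$ cylinder,'' justified by ``since $U$ contains a point of a level-$(n+1)$ set, $|U|$ is bounded below by the relevant minimal cylinder length.'' That inference is false: $U$ \emph{intersecting} a cylinder $B\in\clA_{n+1}$ gives no lower bound on $|U|$ in terms of $|B|$, since $U$ can be a tiny interval containing a single point of $B$. Moreover the hypotheses give only \emph{upper} bounds on diameters (via $d_n(\clA)$), never lower bounds on $|A|$ for $A\in\clA_n$, so there is no route to ``$|U|\geq e^{-O(n)}$'' along these lines. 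The prose surrounding point (b) of your closing paragraph likewise wanders without resolving the competition between $\exp(O(e^{\rho n}))$ and $\exp(-\delta n)$ (you notice this yourself mid-sentence with ``--- no ---''). Since the paper cites \cite[Section~3]{GeroGood} and you ultimately do the same, the honest conclusion is that your proposal sets up the right machinery and the right difficulty, but it does not constitute a proof; if you want a self-contained argument you should work through the cited source rather than relying on the two incorrect shortcuts attempted here.
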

\begin{lemma}\label{Le:GJL02}
Let $\clA$ be a family of compact sets as above except that we allow each $\clA_n$ to be at most countable. If $s>0$ is such that for every sufficiently large $n\in\N$ and any $A\in\scA_n$ we have
\[
\sum_{B\in D(A)} |B|^s\leq |A|^s,
\]
then $\dimh \bfA_{\infty}\leq s$. 
\end{lemma}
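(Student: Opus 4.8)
The plan is to prove $\dimh\bfA_\infty\le s$ by exhibiting, for every $t>s$, natural economical covers of $\bfA_\infty$ showing that $\mathcal{H}^t(\bfA_\infty)=0$; here $\mathcal{H}^t$ denotes the $t$-dimensional Hausdorff measure and $\mathcal{H}^t_\delta$ the associated pre-measure, i.e.\ the infimum of $\sum_i|U_i|^t$ over countable covers $(U_i)_i$ by sets of diameter at most $\delta$. The covers will be the level sets $\{B:B\in\clA_n\}$ themselves, and property $(vi)$ (that $d_n(\clA)\to 0$) guarantees that these covers become arbitrarily fine as $n\to\infty$.

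The one point where this statement genuinely differs from Lemma \ref{Le:GJL01} is that each $\clA_n$ is now only assumed to be at most countable, so the global sum $\sum_{B\in\clA_n}|B|^s$ need not be finite and cannot be controlled directly by the hypothesis. This is the \emph{main obstacle}, and the remedy is a localisation step. Let $n_0$ be large enough that $\sum_{C\in D(B)}|C|^s\le|B|^s$ holds for every $B\in\clA_m$ with $m\ge n_0$. For each fixed $A\in\clA_{n_0}$ I would consider the truncated family $\clA^{(A)}$ with levels $\clA^{(A)}_m:=\{B\in\clA_{n_0+m}:B\subseteq A\}$, and first check that $\clA^{(A)}$ again satisfies $(i)$--$(vi)$, that $\clA^{(A)}_0=\{A\}$ (using the disjointness in $(iii)$ and positivity of diameters in $(ii)$), and that the descendant map of $\clA^{(A)}$ coincides with that of $\clA$ on the relevant sets. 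Then, using $(iii)$ together with $(iv)$, every point of $\bfA_\infty$ is contained in exactly one set of $\clA_{n_0}$, whence $\bfA_\infty=\bigcup_{A\in\clA_{n_0}}\bfA^{(A)}_\infty$. Since $\clA_{n_0}$ is at most countable, by monotonicity and countable stability of Hausdorff dimension it suffices to prove $\dimh\bfA^{(A)}_\infty\le s$ for each fixed $A$.

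For a fixed root $A\in\clA_{n_0}$ I would next show, by induction on $m$, that
\[
\sum_{B\in\clA^{(A)}_m}|B|^s\le |A|^s,
\]
the base case being $\clA^{(A)}_0=\{A\}$ and the inductive step using that $\clA^{(A)}_{m+1}$ is the disjoint union of the descendant sets $D(B)$ over $B\in\clA^{(A)}_m$ (this again follows from $(iii)$ and $(iv)$), together with the hypothesis $\sum_{C\in D(B)}|C|^s\le|B|^s$; the rearrangement of the resulting double sum is legitimate since all terms are nonnegative. Finally, fix $t>s$. For each $m$ the family $\clA^{(A)}_m$ covers $\bfA^{(A)}_\infty$ by compact sets of diameter at most $d_m(\clA)$, so
\[
\mathcal{H}^t_{d_m(\clA)}\bigl(\bfA^{(A)}_\infty\bigr)\le\sum_{B\in\clA^{(A)}_m}|B|^t\le d_m(\clA)^{t-s}\sum_{B\in\clA^{(A)}_m}|B|^s\le d_m(\clA)^{t-s}\,|A|^s.
\]
Letting $m\to\infty$ and invoking $(vi)$ gives $\mathcal{H}^t(\bfA^{(A)}_\infty)=0$, hence $\dimh\bfA^{(A)}_\infty\le t$; letting $t\downarrow s$ yields $\dimh\bfA^{(A)}_\infty\le s$, and the localisation decomposition then finishes the proof. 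Apart from the countability issue addressed by the localisation, every step above is a routine verification.
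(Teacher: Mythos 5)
The paper does not actually prove Lemma \ref{Le:GJL02}; it states just before the two lemmas that ``The next lemmas are proven in \cite[Section 3]{GeroGood}'', so there is no in-paper argument to compare against. Your proof is correct and is the standard route for this kind of upper bound: the localisation at a sufficiently deep level $n_0$ is exactly the right device to deal simultaneously with the ``sufficiently large $n$'' quantifier and with the countability of the $\clA_n$, the decomposition $\bfA_\infty=\bigcup_{A\in\clA_{n_0}}\bfA^{(A)}_\infty$ together with countable stability of $\dimh$ is sound, and the induction $\sum_{B\in\clA^{(A)}_m}|B|^s\le|A|^s$ followed by the covering estimate does the job. One cosmetic slip: in your last display the diameter bound should be $d_{n_0+m}(\clA)$, since the sets in $\clA^{(A)}_m$ sit at level $n_0+m$ of the original family, not $d_m(\clA)$; as both tend to $0$ this does not affect the argument. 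You could also shorten the ending by taking $t=s$ in the same display, which gives $\mathcal{H}^s(\bfA^{(A)}_\infty)\le|A|^s<\infty$ directly and hence $\dimh\bfA^{(A)}_\infty\le s$, without passing through a family of exponents $t>s$.
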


Let us consider an example. Take $N\in\Na_{\geq 3}$, define $\clA_0\colon=\{[0,1]\}$ and, for any $n\in\Na$, put
\[
\scA_n\colon=\left\{ \overline{\scI}_n(\bfa)\colon \bfa\in\intent{2,N}^n\right\}.
\]
The limit set of the family $\scA=\{\scA_n:n\in\Na_0\}$ is the Cantor set
\begin{equation}\label{Eq:DefFN}
F_N\colon=\left\{ x=\langle a_1,a_2,\ldots\rangle \in (0,1]\colon a_n\in\intent{2,N}^n \text{ for all } n\in\Na \right\}.
\end{equation}
Using Lemma \ref{Le:LuDisD} and that every $s\in(0,1)$ satisfies
\[
1=\sum_{b\geq 2} \frac{1}{b(b-1)} < \sum_{b\geq 2} \frac{1}{b^s(b-1)^s},
\]
we may obtain \eqref{Eq:IntroJarnikLuroth}. We omit the proof of this assertion, for it resembles that of Theorem \ref{TEO-DISTAL-LUROTH} (see \cref{distal}).

\section{A symbolic definition}
\label{sec:symbolic}
\noindent Proposition \ref{Teo:LuLambda} invites us to adopt a symbolic perspective. We now introduce the $R$-shuffle of two sequences. Briefly, given two sequences $\saxu$, $\sayu$ and an infinite set $R\subseteq \Na$ with infinite complement, the $R$-shuffle of $\saxu$ and $\sayu$ is the result of placing $\sayu$ on $R$ and $\saxu$ on $\Na\setminus R$.

\begin{definition}
Let $R\subseteq \N$ be an infinite set such that $Q=\N\setminus R$ is infinite. Write $R=\{r_n:n\in\N\}$ and $Q=\{q_n:n\in\N\}$ with $r_n<r_{n+1}$, $q_n<q_{n+1}$ for all $n\in\N$. Let $\scA\subseteq \scD$ be a non-empty set. The $R$\emph{-shuffle} of two sequences $\bfx=\saxu,\bfy=\sayu\in \scA^{\N}$ is the sequence $\scB(\bfx,\bfy;R)=(c_j)_{j\geq 1}\in\scA^{\N}$ given by
\[
c_{q_k}=x_k, \; c_{r_k}=y_k \text{ for all } k\in\N.
\]
If there is some $m\in\N_{\geq 2}$ such that $R=\{km:k\in\N\}$, we write $\scB_m(\bfa,\bfb)=\scB(\bfa,\bfb;R)$. We extend this notation for certain finite sequences: for $m,n\in\N$ with $m\geq 2$, $\bfa=(a_j)_{j\geq 1}\in \scA^{(m-1)n}$, and $\bfb=(b_j)_{j\geq 1}\in \scA^n$, the sequence $\scB_m(\bfa,\bfb)=(c_j)_{j\geq 1} \in \scA^{mn}$ is given by 
\[
\scB_m(\bfa,\bfb) = (a_1,\ldots, a_{m-1}, b_1,a_{m}, \ldots, b_{n-1},a_{(n-1)(m-1)+1}, \ldots,  a_{n(m-1)}, b_n).
\]
In other words, we truncate the corresponding definition for infinite sequences.
\end{definition}
\section{Asymptotic pairs: proof of Theorem \ref{TEO-ASYM-LUROTH}}
\label{asintotico}

\noindent Take an arbitrary number $x\in[0,1]$. We prove the three parts of Theorem \ref{TEO-ASYM-LUROTH} separately. Assume first that $\mathop{\liminf}\limits_{n \to \f} \scL^n(x)>0$. Then, $x\in (0,1]$ and, writing $x=\left\langle a_1,a_2,a_3,\ldots\right\rangle$, we have 
\[
\mathop{\limsup}\limits_{n \to \f} a_n<+\infty
\]
(see Lemma \ref{Le:Luroth-01}); that is, $\sanu$ is a bounded sequence. Consider $y\in\AS_{\scL}(x)$. Clearly, $y\neq 0$ and we can write $y=\left\langle b_1,b_2,b_3,\ldots\right\rangle$. If $\sabu$ were unbounded, there would be a strictly increasing sequence of natural numbers $(n_j)_{j\geq 1}$ such that $\scL^{n_j}(y)\to 0$ when $j\to \infty$. We would then obtain a contradiction:
\begin{align*}
0 &= \lim_{n\to\infty} \left| \scL^n(x) - \scL^n(y)\right| \nonumber\\
& = \limsup_{j\to\infty} \left| \scL^{n_j}(x)-\scL^{n_j}(y)\right| = \limsup_{j\to\infty} \scL^{n_j}(x) \geq \liminf_{n\to\infty} \scL^n(x)>0. \nonumber
\end{align*}
Therefore, $\sabu$ is a bounded sequence. Put 
\[
M\colon=\max\left\{\max_{n\in\Na} a_n, \max_{n\in\Na} b_n\right\}.
\]
Then, the orbits under $\scL$ of $x$ and $y$ are thus contained in the Cantor set
\[
F_M=
\{\left\langle c_1,c_2,c_3,\ldots\right\rangle: c_n\in\intent{2,M} \text{ for all } n\in\Na \}.
\]
For each $r\in\intent{2,M}$ define 
\[
\mathscr{K}(r) = \bigcup_{j=2}^M \overline{\scI}_2(r,j).
\]
Observe that the collection of mutually disjoint compact sets $\{\mathscr{K}(r):r\in\intent{2,M}\}$ is finite, hence
\[
\veps \colon= \min\left\{d\left(\mathscr{K}(r), \mathscr{K}(s)\right)\colon s,r\in\intent{2,M}, s\neq r\right\}>0.
\]
Let $N=N(\veps)\in\N$ be such that $|\scL^n(x)-\scL^n(y)|<\veps$ for $n\geq N$. For such $n$, the points $\scL^n(x)$ and $\scL^n(y)$ belong to the same set $\mathscr{K}(r)$ and hence $a_{n+1}=b_{n+1}$. Moreover, it is obvious that every number $z=\left\langle c_1,c_2,c_3,\ldots\right\rangle\in(0,1]$ such that $c_n=a_n$ holds for every sufficiently large $n$ belongs to $\AS_{\scL}(x)$. This shows that
\[
\textstyle\AS_{\scL}(x) = \displaystyle\left\{ y=\left\langle b_1,b_2,b_3,\ldots\right\rangle \colon b_n=a_n \text{ for sufficiently large } n\right\},
\]
so $\AS_{\scL}(x)$ is countable.

Second, assume that $\displaystyle\lim_{n\to\infty} \scL^n(x) = 0$. If $y\in \AS_{\scL}(x)$, then $\scL^n(y)\to 0$ if $n\to \infty$, so either $y=0$ or $y=\left\langle b_1,b_2,b_3,\ldots\right\rangle$ satisfies $b_n\to\infty$ as $n\to \infty$ (see Lemma \ref{Le:Luroth-01}). Conversely, every $y$ with any of these two properties is an element of $\AS_{\scL}(x)$. Therefore,
\[
\textstyle\AS_{\scL}(x) = \left\{ y=\left\langle b_1,b_2,b_3,\ldots\right\rangle \colon \displaystyle\lim_{n\to\infty} b_n = \infty\right\}\cup\{0\}
\]
and, by \eqref{Eq:IntroGoodLuroth}, $\dimh \AS_{\scL}(x)=\frac{1}{2}$.

Third, let us assume that $\mathop{\liminf}\limits_{n \to \f}\scL^n(x)=0$ and that $\mathop{\limsup}\limits_{n \to \f} \scL^n(x)>0$. The second inequality yields $x\neq 0$, so we can write $x=\left\langle a_1,a_2,a_3,\ldots\right\rangle$. The argument in this case is much more involved. In a few words, we are going to build a superset of $\AS_{\scL}(x)$ whose Hausdorff dimension we will estimate with Lemma \ref{Le:GJL02}.

\begin{proposition}\label{Propo:Asymp01}
Given $y=\left\langle b_1,b_2,\ldots\right\rangle$, $M\in\N_{\geq 3}$, and $n\in\N$, let $P(x,y,M,n)$ be the statement
\[
P(x,y,M,n)\colon= (a_{n+1}\leq M\implies a_{n+1}=b_{n+1})\land(a_{n+1}> M\implies b_{n+1}>M).
\]
The following holds:
\[
\textstyle\AS_{\scL}(x)
\subseteq\displaystyle
\bigcap_{M\geq 3}\bigcup_{N\in\N}\bigcap_{n\geq N} \left\{y \in (0,1]\colon P(x,y,M,n) \right\}.
\]
\end{proposition}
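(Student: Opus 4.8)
The plan is to fix an arbitrary $y\in\AS_{\scL}(x)$ and an arbitrary $M\in\N_{\geq 3}$, and to produce the required $N$ directly from the convergence $|\scL^n(x)-\scL^n(y)|\to 0$ together with a uniform ``one‑step separation'' property of $\scL$ near its discontinuities. First, since $\limsup_n\scL^n(x)>0$ we have $\scL^n(x)\not\to 0$, so $0\notin\AS_{\scL}(x)$; hence $y\in(0,1]$ and we may write $y=\langle b_1,b_2,\ldots\rangle$ as in the statement. It then suffices to exhibit $N\in\N$ such that $P(x,y,M,n)$ holds for every $n\geq N$.

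The key structural fact I would use is that $\scL$ restricted to $\scI_1(c)=(\tfrac1c,\tfrac1{c-1}]$ is the affine map $z\mapsto c(c-1)z-(c-1)$, which has right‑limit $0$ at $\tfrac1c$, value $1$ at $\tfrac1{c-1}$, and slope $c(c-1)$; consequently $\scL$ has a jump of the form ``left‑limit $1$, right‑limit $0$'' at each point $\tfrac1c$ with $c\in\scD$. I would fix a small parameter $\eta=\eta(M)>0$, chosen smaller than $\tfrac1{M(M+1)}$ (so that an $\eta$‑window cannot skip over a level‑$1$ fundamental interval of index at most $M+1$) and smaller than an explicit constant such as $\tfrac1{8M^{2}}$ (needed in the estimates below), and then use $y\in\AS_{\scL}(x)$ to pick $N$ with $|\scL^n(x)-\scL^n(y)|<\min\{\eta,\tfrac14\}$ for all $n\geq N$. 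This inequality then automatically holds at index $n+1$ whenever $n\geq N$, which is precisely what makes the jump of $\scL$ usable.

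Fix $n\geq N$ and split according to the two clauses of $P(x,y,M,n)$. If $a_{n+1}=:c\leq M$, I would assume $b_{n+1}\neq c$ and argue by contradiction: since $\scL^n(x)\in\scI_1(c)$ and $|\scL^n(x)-\scL^n(y)|<\eta$, the choice of $\eta$ forces $b_{n+1}\in\{c-1,c+1\}$, and more precisely forces $\scL^n(x)$ and $\scL^n(y)$ to lie within $\eta$ of a single boundary point, on opposite sides of it (namely $\tfrac1c$ when $b_{n+1}=c+1$, and $\tfrac1{c-1}$ when $b_{n+1}=c-1$). Evaluating the two affine branches that meet at that point then puts $\scL^{n+1}$ of one of $x,y$ in an interval $(0,C(M)\eta)$ and the other in $(1-C(M)\eta,1]$, whence $|\scL^{n+1}(x)-\scL^{n+1}(y)|>1-2M^2\eta>\tfrac14$, contradicting $n+1\geq N$. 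Hence $b_{n+1}=c$. For the clause $a_{n+1}>M$, if $b_{n+1}\leq M$ then $\scL^n(y)>\tfrac1{b_{n+1}}\geq\tfrac1M\geq\scL^n(x)$, so the two orbits straddle $\tfrac1M$ within $\eta$; the bound $\eta<\tfrac1{M(M+1)}$ pins the relevant digits to $a_{n+1}=M+1$ and $b_{n+1}=M$, and the identical jump estimate yields the contradiction. Thus $b_{n+1}>M$, so $P(x,y,M,n)$ holds for all $n\geq N$, and since $M$ was arbitrary the asserted containment follows.

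The main obstacle — essentially the only non‑routine step — is the \emph{straddling} analysis: one must verify that two points which are $\eta$‑close but have different first $\scL$‑digits necessarily sit on opposite sides of exactly one boundary point $\tfrac1c$, with the index $c$ controlled by $M$, so that the uniform jump of $\scL$ across that boundary can legitimately be invoked. Once this is in place, what remains is elementary bookkeeping of a handful of thresholds on $\eta$, all depending only on $M$.
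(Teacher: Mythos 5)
Your proof is correct, and it captures the same underlying mechanism as the paper's proof — close orbits with different $(n+1)$-st digits must sit within a small window of a boundary point $\frac{1}{c}$ of the L\"uroth partition, and the jump of $\scL$ across that boundary then makes $\scL^{n+1}(x)$ and $\scL^{n+1}(y)$ far apart, contradicting the asymptotic assumption applied at index $n+1$.

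The one place where you genuinely deviate from the paper is in how the adjacent-digit cases $b_{n+1}=a_{n+1}\pm 1$ are handled. The paper (Cases~1.2 and~1.4) does not bound the distance to the boundary directly; instead it runs a three-step argument on the \emph{next} digits, showing (for $b_{n+1}=a_{n+1}-1$) that necessarily $a_{n+2}=2$ and $b_{n+2}\geq 4$, hence $\scL^{n+1}(x)>\tfrac12$ and $\scL^{n+1}(y)\leq\tfrac13$, giving a gap of at least $\tfrac16>\veps$. Your version replaces this with a single uniform estimate: both orbit points lie within $\eta$ of the same boundary $\tfrac{1}{c-1}$ (resp.\ $\tfrac{1}{c}$), and evaluating the two affine branches meeting there gives $|\scL^{n+1}(x)-\scL^{n+1}(y)|>1-2M^2\eta>\tfrac14$. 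This is cleaner — one computation covers both $\pm1$ cases — at the modest cost of a slightly more restrictive choice of $\eta$ (the paper needs only $\veps\leq\tfrac{1}{4M(M+1)}$, while you need $\eta<\tfrac{1}{8M^2}$). For the clause $a_{n+1}>M$, the paper simply invokes the symmetry of the asymptotic relation, while you redo the jump argument; again both routes work.

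One small point worth making explicit if this were to be written out in full: your straddling step implicitly uses that $\eta<\tfrac{1}{M(M+1)}$ also dominates $\tfrac{1}{(c-1)(c-2)}$ for every $c\in\intent{3,M}$, and that the case $c=2$ has no left-adjacent branch so $b_{n+1}=c-1$ cannot occur there; these are true but should be noted so that the claim ``$b_{n+1}\in\{c-1,c+1\}$'' is airtight on both sides.
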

Proposition \ref{Propo:Asymp01}, and in particular $P$, provides a precise statement of the following claim: if $y\in \AS_{\scL}(x)$, then the digits in the Lüroth expansions of $x$ and $y$ which are smaller than a given bound eventually coincide.
\begin{proof}
Let $M_1\colon (0,24^{-1})\to\N_{\geq 2}$ be the function that associates to each $\veps\in (0,24^{-1})$ the integer
\[
M_1(\veps) \colon= \max\left\{ m\in\N\colon m(m+1)\leq (4\veps)^{-1} \right\}.
\]
It is easy to check that $M_1$ is non-increasing, surjective, and that $M_1(\veps)\to\infty$ when $\veps\to 0$.

Take $y\in\AS_{\scL}(x)$. As before, $y\neq 0$ and we write $y=\left\langle b_1,b_2,\ldots\right\rangle$. Take an arbitrary $M\in\N_{\geq 3}$. We now obtain a natural number $N$ such that $P(x,y,M,n)$ holds for any $n\in\N_{\geq N}$. Let $\veps>0$ satisfy $M_1(\veps)=M$ and let $N\in\N$ be such that 
\[
\left| \scL^n(x) - \scL^n(y)\right| < \veps 
\quad
\text{ for all } n\in \Na_{\geq N}.
\]
Pick any $n\in\N_{\geq N}$. 

Assume first that $a_{n+1} \leq M$. We will conclude $a_{n+1}=b_{n+1}$ after showing that $a_{n+1}\neq b_{n+1}$ leads to the contradiction
\begin{equation}\label{Eq:DemAsymp01}
|\scL^{n}(x)-\scL^n(y)|\geq \veps.
\end{equation}
The argument amounts to check that $b_{n+1}$ cannot belong to any of the sets $\intent{2,a_{n+1}-2}$, $\{a_{n+1}-1\}$, $\intent{a_{n+1}+2,\infty}$, or $\{a_{n+1}+1\}$. It may happen that some of these sets are empty, but there is nothing to prove if this is true.

\paragraph{Case 1.1} If $b_{n+1}\leq a_{n+1} - 2$, then 
\begin{align*}
\left| \scL^n(y) - \scL^n(x)\right| &=  \scL^n(y) - \scL^n(x) \\
&> \frac{1}{b_{n+1}} - \frac{1}{a_{n+1}-1} \\ 
&\geq \frac{1}{a_{n+1}-2} - \frac{1}{a_{n+1}-1}= \frac{1}{(a_{n+1}-1)(a_{n+1}-2)}> \frac{1}{M(M+1)}\geq \veps.
\end{align*}
Therefore, $b_{n+1}\geq a_{n+1}-1$.

\paragraph{Case 1.2} Assume that $b_{n+1}= a_{n+1}-1$. We will proceed in three steps: firstly, we show $a_{n+2}=2$; afterwards, we show $b_{n+2}\geq 4$; finally, we obtain a contradiction.

\textit{First step.} If we had $a_{n+2}\geq 3$, then $\scL^{n+1}(x) \leq 2^{-1}$ would hold and
\[
\scL^n(x) 
= \frac{1}{a_{n+1}} + \frac{\scL^{n+1}(x)}{a_{n+1}(a_{n+1}+1)}
\leq \frac{1}{a_{n+1}} + \frac{1}{2a_{n+1}(a_{n+1}+1)}.
\]
The previous inequality and $\scL^n(y)> b_{n+1}^{-1}=(a_{n+1}-1)^{-1}$ yield
\begin{align*}
\left| \scL^n(y) - \scL^n(x)\right| &=  \scL^n(y) - \scL^n(x) \\
&> \frac{1}{a_{n+1}-1} - \left(\frac{1}{a_{n+1}}+ \frac{1}{2a_{n+1}(a_{n+1}-1)}\right) \\ 
&= \frac{1}{2a_{n+1}(a_{n+1}-1)}> \frac{1}{2M (M + 1)}\geq \veps.
\end{align*}
We conclude that $a_{n+2}=2$.

\textit{Second step.} If $b_{n+2}\in\{2,3\}$ were true, we would have $\scL^{n+1} (y)> 3^{-1}$ and
\[
\scL^n(y) 
= \frac{1}{a_{n+1}-1} + \frac{\scL^{n+1}(y)}{(a_{n+1}-1)(a_{n+1}-2)}
\geq \frac{1}{a_{n+1}-1} + \frac{1}{3(a_{n+1}-1)(a_{n+1}-2)}.
\]
The previous expression and $\scL^n(x)<(a_{n+1}-1)^{-1}$ give
\begin{align*}
\left| \scL^n(y) - \scL^n(x)\right| &=  \scL^n(y) - \scL^n(x) \\
&> \frac{1}{a_{n+1}-1} + \frac{1}{3(a_{n+1}-1)(a_{n+1}-2)} - \frac{1}{a_{n+1}-1} \\ 
&= \frac{1}{3(M + 1) M} \geq \veps.
\end{align*}
We conclude that $b_{n+2}\geq 4$.

\textit{Third Step.} By the first step, $\scL^{n+1}(x)>2^{-1}$ and, by the second step, $\scL^{n+1}(y)\leq 3^{-1}$; hence, 
\[
\scL^{n+1}(x) - \scL^{n+1}(y) > \frac{1}{2} - \frac{1}{3} = \frac{1}{6}>\veps.
\]
We conclude that $a_{n+1}\leq b_{n+1}$.

\paragraph{Case 1.3} Assume that $b_{n+1}\geq a_{n+1}+ 2$, then 
\[
\scL^n(y) \leq \frac{1}{b_{n+1}-1}\leq \frac{1}{a_{n+1}+1}.
\]
The previous inequality together $\scL^n(x)> a_{n+1}^{-1}$ give
\begin{align*}
\left| \scL^n(y) - \scL^n(x)\right| &=  \scL^n(x) - \scL^n(y) \\
&> \frac{1}{a_{n+1}} - \frac{1}{a_{n+1}+1} = \frac{1}{a_{n+1}(a_{n+1}+1)} \geq \frac{1}{(M + 1)M } > \veps.
\end{align*}
Hence, we have $b_{n+1} \in \{a_{n+1}, a_{n+1}+1\}$.

\paragraph{Case 1.4} Assume that $b_{n+1}= a_{n+1}+1$. The proof is quite similar to that of Case 1.2. 

\textit{First step.} If we had $a_{n+2}\in \{2,3\}$, then $\scL^{n+1}(x)>3^{-1}$ would hold and
\[
\scL^n(x) 
= \frac{1}{a_{n+1}} + \frac{\scL^{n+1}(x)}{a_{n+1}(a_{n+1}+1)}
> \frac{1}{a_{n+1}} + \frac{1}{3a_{n+1}(a_{n+1}+1)}.
\]
This inequality and $\scL^n(y)\leq (b_{n+1}-1)^{-1}=a_{n+1}^{-1}$ imply
\begin{align*}
\left| \scL^n(y) - \scL^n(x)\right| &=  \scL^n(x) - \scL^n(y) \\
&> \frac{1}{a_{n+1}} + \frac{1}{3a_{n+1}(a_{n+1}-1)} - \frac{1}{a_{n+1}}  \\ 
&= \frac{1}{3a_{n+1}(a_{n+1}-1)} > \frac{1}{3M (M + 1)}\geq \veps.
\end{align*}
Thus, we must have $a_{n+2}\geq 4$.

\textit{Second step.} If $b_{n+2}\geq 3$, then $\scL^{n+1}(y)\leq 2^{-1}$ and
\[
\scL^n(y) 
= \frac{1}{b_{n+1}} + \frac{\scL^{n+1}(y)}{b_{n+1}(b_{n+1}-1)}
\leq \frac{1}{a_{n+1}+1} + \frac{1}{2a_{n+1}(a_{n+1}+1)}.
\]
The last inequality and $\scL^n(x)> a_{n+1}^{-1}$ lead the contradiction
\begin{align*}
\left| \scL^n(y) - \scL^n(x)\right| &=  \scL^n(x) - \scL^n(y) \\
&> \frac{1}{a_{n+1}} - \left( \frac{1}{a_{n+1}+1} + \frac{1}{2a_{n+1}(a_{n+1}+1)}\right) \\ 
&=\frac{1}{2a_{n+1}(a_{n+1}+1)} \geq \frac{1}{2(M + 1)M} \geq \veps.
\end{align*}
Hence $b_{n+2}=2$.

\textit{Third Step.} The first two steps give $\scL^{n+1}(x)\leq 3^{-1}$ and $\scL^{n+1}(y)> 2^{-1}$, so
\[
\scL^{n+1}(y) - \scL^{n+1}(x) > \frac{1}{2} - \frac{1}{3} = \frac{1}{6}>\veps
\]
and we may say that $b_{n+1}+1\neq a_{n+1}$. Combining Cases 1.1, 1.2, 1.3, and 1.4, we conclude that $a_{n+1}\leq M$ implies  $a_{n+1}=b_{n+1}$.

Now assume that $a_{n+1}>M$. If we had $b_n\leq M$, the previous argument and the symmetry of the asymptotic relation guarantee $a_{n+1}=b_{n+1}\leq M$, a contradiction. Therefore, $b_{n+1}>M$. 

Since $n\in\Na_{\geq N}$ was arbitrary, the proof is done.
\end{proof}
For each $M\in\Na_{\geq 3}$ and $N\in\Na$, we define the functions $f_N^M, g_N^M:\Na\to\scD\cup\{\infty\}$ as follows: for any $n\in\Na$
\[
f_N^M(n) = 
\begin{cases}
a_n, \text{ if } 1\leq n\leq N-1,\\
a_n, \text{ if } N\leq n \text{ and } a_n\leq M,\\
M+1, \text{ if } N\leq n \text{ and } M<a_n,
\end{cases} \quad
g_N^M(n) = 
\begin{cases}
a_n, \text{ if } 1\leq n\leq N-1,\\
a_n, \text{ if } N\leq n \text{ and } a_n\leq M,\\
+\infty, \text{ if } N\leq n  \text{ and } M<a_n.
\end{cases}
\]
Also, we define the set
\[
G_N^M\colon= \left\{ y=\langle b_1,b_2,b_3,\ldots\rangle \in (0,1]: f_N^M(n)\leq b_n\leq g_N^M(n) \text{ for all } n\in\Na_{\geq N}\right\}.
\]
With this notation, Proposition \ref{Propo:Asymp01} can be summarized as follows:
\begin{equation}\label{Eq:Asymp01}
\textstyle\AS_{\scL}(x)
\displaystyle
\subseteq \bigcap_{M\geq 3}\bigcup_{N\in\N} G_N^M.
\end{equation}
\begin{proposition}\label{Propo:Asymp02}
For each $\veps>0$ there is some $M\in\N_{\geq 3}$ such that every $N\in\N$ verifies $\dimh (G_N^M) < \frac{1}{2}+\veps$. 
\end{proposition}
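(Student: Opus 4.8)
The plan is to estimate $\dimh G_N^M$ using Lemma \ref{Le:GJL02}. The set $G_N^M$ is described by a constraint on each digit $b_n$: for $n\geq N$, either $b_n$ is a fixed value $a_n\leq M$ (when $a_n\leq M$), or $b_n$ is free to range over all of $\N_{\geq M+1}$ (when $a_n>M$). For $n<N$ the digits are fixed as $b_n=a_n$. I would therefore realize $G_N^M$ (or rather its closure, up to countably many endpoints, which does not affect Hausdorff dimension) as the limit set $\mathbf{A}_\infty$ of the tree-like family $\clA=\bigcup_n \clA_n$ where $\clA_n$ consists of the closures $\overline{\scI}_n(\bfc)$ of those fundamental intervals of level $n$ whose digit string is compatible with the $G_N^M$ constraint. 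By Corollary \ref{Co:Luroth-02}, $|\overline{\scI}_n(\bfc)|=\prod_{j=1}^n \frac{1}{c_j(c_j-1)}$, and the descendants $D(\overline{\scI}_n(\bfc))$ correspond exactly to the admissible choices of the next digit $c_{n+1}$. Hence, writing $A=\overline{\scI}_n(\bfc)$ and fixing $s$, the ratio $\sum_{B\in D(A)}|B|^s / |A|^s$ equals $\sum_{c} \big(c(c-1)\big)^{-s}$ where $c$ ranges over the admissible values of the $(n+1)$-st digit: it is $1^{-s}$ (i.e. the single term $\big(a_{n+1}(a_{n+1}-1)\big)^{-s}$, but note this is a ratio so it is really just checking that $|B|^s=|A|^s$ — the ratio is $1$) when $a_{n+1}\leq M$, and it is $\sum_{c>M}\big(c(c-1)\big)^{-s}$ when $a_{n+1}>M$.

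The key analytic input is then: for $s>\tfrac12$, the tail sum $\sum_{c>M}\big(c(c-1)\big)^{-s}$ is finite and tends to $0$ as $M\to\infty$ (indeed it is dominated by $\sum_{c>M} c^{-2s}$, a convergent tail since $2s>1$). So given $\veps>0$, set $s=\tfrac12+\veps$ (or slightly less, staying above $\tfrac12$) and choose $M\in\N_{\geq 3}$ large enough that $\sum_{c>M}\big(c(c-1)\big)^{-s}\leq 1$. With this choice, for \emph{every} $N$ and every level $n$ and every $A\in\clA_n$, the descendant inequality $\sum_{B\in D(A)}|B|^s\leq |A|^s$ holds — either trivially with equality (forced digit) or because of the tail estimate (free digit). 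The hypotheses on $\clA_n$ being at most countable and $d_n(\clA)\to 0$ are immediate: each $\clA_n$ is countable, and $d_n(\clA)\leq 2^{-n}$ by Corollary \ref{Co:Luroth-02}. Lemma \ref{Le:GJL02} then yields $\dimh G_N^M\leq \dimh \mathbf{A}_\infty\leq s=\tfrac12+\veps$, and since this works for all $N$ simultaneously (the choice of $M$ did not depend on $N$), the proposition follows. Strictly we get $\dimh G_N^M\leq \tfrac12+\veps$; to land strictly below $\tfrac12+\veps$ one can simply run the argument with $\veps/2$ in place of $\veps$.

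The main obstacle — really the only point requiring care — is the bookkeeping needed to present $G_N^M$ as a legitimate tree-like structure: one must check that the relevant fundamental intervals are nested as required, that distinct level-$n$ intervals are disjoint (clear from Lemma \ref{Le:Luroth-01}(i)), that each level-$n$ interval has a level-$(n+1)$ descendant (true because at least the digit $a_{n+1}$, or any digit $>M$, is always admissible), and that passing from $G_N^M$ to the limit set of closures changes the set by at most a countable set of endpoints, hence does not affect the Hausdorff dimension. One should also note that the first $N-1$ coordinates being fixed only prepends a single fixed interval $\scI_{N-1}(\mathrm{pref}(\bfa,N-1))$ at level $N-1$ and contributes nothing to the dimension. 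None of this is difficult, but it is the place where the argument must be written out carefully; the dimension bound itself is an immediate consequence of the elementary tail estimate for $\sum_{c>M}(c(c-1))^{-s}$ together with Lemma \ref{Le:GJL02}.
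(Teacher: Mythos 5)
Your core strategy — apply Lemma \ref{Le:GJL02} to a tree of fundamental intervals and reduce the descendant inequality to the tail estimate $\sum_{k>M}\bigl(k(k-1)\bigr)^{-s}\leq 1$ for $s=\tfrac12+\veps$ and $M$ large — is exactly the engine of the paper's proof, and the computation you propose is correct. However, there is a genuine gap caused by a misreading of the definition of $G_N^M$. You assert that for $n<N$ the digits are fixed to $b_n=a_n$; in fact the constraint $f_N^M(n)\leq b_n\leq g_N^M(n)$ is imposed in the definition only for $n\geq N$, so $b_1,\ldots,b_{N-1}$ range freely over all of $\scD$. Consequently, the limit set of the tree you build (rooted at $\scI_{N-1}(\mathrm{pref}(\bfa,N-1))$) is not $G_N^M$ but only $G_N^M\cap\scI_{N-1}(\mathrm{pref}(\bfa,N-1))$, and your final bound as written applies to that smaller set.

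The paper devotes the first half of its proof of this proposition precisely to bridging this gap. It decomposes $G_N^M=\bigcup_{\bfc\in\scD^{N-1}}G_N^M\cap\scI_{N-1}(\bfc)$ and, for any two prefixes $\bfc,\bfd$, exhibits a bi-Lipschitz bijection $\scR_{\bfc}^{\bfd}=\scT_{\bfd}^{N-1}\circ\scL^{N-1}$ between $G_N^M\cap\scI_{N-1}(\bfc)$ and $G_N^M\cap\scI_{N-1}(\bfd)$. Since these countably many pieces all have the same Hausdorff dimension and the dimension of a countable union is the supremum, one obtains $\dimh G_N^M=\dimh\bigl(G_N^M\cap\scI_{N-1}(\mathrm{pref}(\bfa,N-1))\bigr)$, after which your tree argument applies verbatim. (Alternatively, one could keep the first $N-1$ levels free in the tree and appeal to the ``sufficiently large $n$'' clause of Lemma \ref{Le:GJL02}, but then one must separately verify that $\sum_{A\in\clA_{N-1}}|A|^s=\bigl(\sum_{k\geq 2}(k(k-1))^{-s}\bigr)^{N-1}$ is finite for $s>\tfrac12$, since for such $s$ the descendant inequality genuinely fails at the unrestricted levels — the per-level factor exceeds $1$.) In short: the analytic heart of your proposal matches the paper, but you need to add the reduction step identifying $\dimh G_N^M$ with the dimension of a single prefix class, which is not automatic from your (incorrect) description of the set.
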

\begin{proof}
Take $M\in\N_{\geq 3}$. We first show that $(\dimh (G_N^M))_{N\geq 1}$ is a constant sequence. On the one hand, $(\dimh (G_N^M))_{N\geq 1}$ is non-decreasing because so is $(G_N^M)_{N\geq 1}$. On the other hand, let $N\in \N_{\geq 2}$ be arbitrary and note that
\begin{equation}\label{Eq:Asymp02}
G_N^M = \bigcup_{\bfc\in \clD^{N-1}} G_N^M \cap \scI_{N-1}(\bfc).
\end{equation}
For any $\bfc,\bfd\in\scD^{N-1}$, the function 
\[
\scR_{\bfc}^{\bfd}\colon G_N^M \cap \scI_{N-1}(\bfc)\to G_N^M \cap \scI_{N-1}(\bfd)
\]
given by
\[
\scR_{\bfc}^{\bfd}(y)=\scT_{\bfd}^{N-1}\circ \scL^{N-1}(y)
\quad
\text{ for all } y\in G_N^M \cap \scI_{N-1}(\bfc)
\]
is bi-Lipschitz as it is a composition of linear functions. From a symbolic perspective, $\scR_{\bfc}^{\bfd}$ replaces the prefix $\bfc=(c_1,\ldots,c_{N-1})$ with $\bfd=(d_1,\ldots,d_{N-1})$; in other words, for any 
\[
\langle c_1,\ldots,c_{N-1},b_N,b_{N+1},\ldots\rangle\in G_N^M\cap\scI_{N-1}(\bfc)
\]
we have
\[
\scR_{\bfc}^{\bfd}\left(\left\langle c_1,\ldots, c_{N-1}, b_N,b_{N+1}, b_{N+2}, \ldots\right\rangle\right)
=
\left\langle d_1,\ldots, d_{N-1}, b_N,b_{N+1}, b_{N+2}, \ldots\right\rangle.
\]
Therefore, $\dimh (G_N^M \cap \scI_{N-1}(\bfc))=\dimh (G_N^M \cap \scI_{N-1}(\bfd))$ (see \cite[Proposition 3.3]{Fal2014}) and, in view of \eqref{Eq:Asymp02},
\begin{equation}\label{Eq:Asymp03}
\dimh \left(G_N^M \right)= \dimh \left(G_N^M\cap \scI_{N-1}(a_1,\ldots, a_{N-1})\right).
\end{equation}
Then, using
\[
G_N^M\cap \scI_{N}(a_1,\ldots, a_{N}) =  G_{N+1}^M\cap \scI_N(a_1,\ldots, a_N),
\]
we have
\begin{align*}
\dimh \left( G_N^M\right)&= \dimh \left(G_{N}^M \cap \scI_{N-1}(a_1,\ldots, a_{N-1})\right)\\
&\geq \dimh \left(G_N^M \cap \scI_N(a_1,\ldots,a_{N-1}, a_N)\right)\\
&=\dimh \left(G_{N+1}^M \cap \scI_{N}(a_1, \ldots, a_{N-1},a_{N})\right)\\
&= \dimh \left(G_{N+1}^M \right).
\end{align*}
The argument also holds for $N=1$ if we replace $\scI_{N-1}(a_1,\ldots, a_{N-1})$ with $[0,1]$. Hence, $(\dimh (G_N^M))_{N\geq 1}$ is a constant sequence.

Take $\veps>0$ and put $s=\frac{1}{2}+\veps$. Let $M\in\N$ be such that
\begin{equation}\label{Eq:DemAsymp02}
M\geq 1 + \left( \frac{1}{2\veps} \right)^{\frac{1}{2\veps}}
\end{equation}
and take any $N\in\N$. We now get an upper estimate for $\dimh \left(G_N^M\cap \scI_{N-1}(a_1,\ldots, a_{N-1})\right)$ using Lemma \ref{Le:GJL02}. To each $n\in\N$ and any $\bfc=(c_1, \ldots, c_n)\in \scD^n$ such that $f(N-1+j)\leq c_j\leq g(N-1+j)$ for $j\in\intent{1,n}$, we associate the compact set
\[
A_n(\bfc)\colon= \overline{\scI}_{N+n-1}(a_1,\ldots, a_{N-1}, c_1, \ldots, c_n)
\]
and we call $\scA_n$ the collection of such sets $A_n(\bfc)$. Define $\scA_0=\{[0,1]\}$. The limit set $\bfA_{\infty}(M,N)$ of the family $\scA=\{\scA_n:n\in\Na_0\}$ is precisely $G_{N}^M\cap \scI_{N-1}(a_1,\ldots, a_{N-1})$. 

We claim that for any $n\in\N$ and any valid $\bfc=(c_1,\ldots, c_n)$ the following holds:
\begin{equation}\label{Eq:Asymp04}
\sum_{k=f(n+1)}^{g(n+1)} |A_{n+1}(\bfc k)|^s \leq |A_n(\bfc)|^s.
\end{equation}
Indeed, the inequality above is, by the definition of $A_n(\bfc)$ and Corollary \ref{Co:Luroth-02}, equivalent to
\begin{equation}\label{Eq:Asymp05}
\sum_{k=f(n+1)}^{g(n+1)} \frac{1}{k^s(k-1)^s} \leq 1.
\end{equation}
If $g(n+1)<+\infty$, then \eqref{Eq:Asymp05} reduces to $a_{n+1}^{-s}(a_{n+1}-1)^{-s}\leq 1$, which is true because $s>0$ and $a_{n+1}\geq 2$. Now assume that $g(n+1)=+\infty$. From
\begin{align*}
\sum_{k=f(n+1)}^{g(n+1)} \frac{1}{k^s(k-1)^s} &= \sum_{k=M+1}^{\infty} \frac{1}{k^s(k-1)^s} \\
&\leq \sum_{k=M+1}^{\infty} \frac{1}{(k-1)^{2s}} \\
&\leq \int_{M-1}^{\infty} x^{-2s}\md x = \frac{(M-1)^{-2\veps}}{2\veps}
\end{align*}
and \eqref{Eq:DemAsymp02} we obtain $\frac{(M-1)^{-2\veps}}{2\veps}\leq 1$. Then, \eqref{Eq:Asymp05} holds and, by Lemma \ref{Le:GJL02} and \eqref{Eq:Asymp03}, we conclude
\[
\dimh \left(G_N^M\right)= \dimh \left(\bfA_{\infty}(M,N)\right) \leq s = \frac{1}{2} + \veps.
\]
\end{proof}
We can now finish the proof of the third point in Theorem \ref{TEO-ASYM-LUROTH}. For any $\veps>0$ let $M=M(\veps)$ be as in Proposition \ref{Propo:Asymp02}. Then,
\[
\dimh \left(\bigcup_{N\in\N} G_N^M \right)\leq \frac{1}{2} + \veps;
\]
so, by \eqref{Eq:Asymp01}, $\dimh(\AS_{\scL}(x))\leq \frac{1}{2}+\veps$. Since $\veps>0$ was arbitrary, $\dimh (\AS_{\scL}(x))\leq \frac{1}{2}$ and the proof of Theorem \ref{TEO-ASYM-LUROTH} is complete.

\section{Distal pairs: proof of Theorem \ref{TEO-DISTAL-LUROTH}}
\label{distal}

\noindent Let $x\in [0,1]$ be arbitrary. We consider two different cases: $x=0$ and $x\in (0,1]$. 

The first case is already done. In fact, since $\scL(0)=0$, a real number $y$ belongs to $\noD_{\scL}(0)$ if and only if 
\[
\liminf_{n\to\infty} \scL^n(y)>0,
\]
or, equivalently, if its Lüroth digits are bounded. Hence, by \eqref{Eq:IntroLuBnd} and \eqref{Eq:IntroJarnikLuroth}, we conclude $\leb(\noD_{\scL}(0))=0$ and $\dimh \noD_{\scL}(0)=1$.

Now assume that $x\in(0,1]$ and write $x=\left\langle a_1,a_2,a_3,\ldots\right\rangle$. Both $x$ and $\sanu$ will remain fixed for the rest of the proof. We first show that $\leb(\noD_{\scL}(x))=0$ relying on a result due to W. Philipp. Afterwards, in order to show $\dimh \noD_{\scL}(x)=1$, we construct a subset of $\noD_{\scL}(x)$ with full Hausdorff dimension. 

\subsection{The distal pairs of $x$ are null}

W. Philipp proved in the 1967 article \cite{Phi1967} a quantitative version of the Borel-Canelli Lemma (Theorem \ref{Teo:PhilipBC} below). W. Philipp used it to obtain measure theoretic properties of numeration systems including regular continued fractions and expansions on integer base $b\geq 2$. In what follows, $\mathscr{O}$ refers to Landau's big-O notation. 
\begin{theorem}\cite[Theorem 3]{Phi1967}\label{Teo:PhilipBC}
Let $(X,\scB,\mu)$ be a probability space and let $(E_n)_{n\geq 1}$ be a sequence of measurable sets. For each $N\in\Na$ and each $t\in X$, define
\[
A(N,t)\colon =\#\left\{ n\in \intent{1,N}\colon t\in E_n\right\}
\]
and 
\[
\vphi(N)\colon = \sum_{n=1}^N \mu(E_n).
\]
Suppose that there is a summable sequence of non-negative real numbers $(C_n)_{n\geq 1}$ such that for any $m,n\in\N$ satisfying $m<n$ we have
\[
\mu(E_n\cap E_m)\leq \mu(E_n)\mu(E_m)+\mu(E_n)C_{n-m}.
\]
Then, for $\mu$-almost every $t\in X$ and every $\veps>0$ we have
\[
A(N,t)= \vphi(N) + \mathscr{O}\left(\sqrt{\vphi(N)}\left(\log\vphi(N)\right)^{\frac{3}{2}+\veps}\right).
\]
\end{theorem}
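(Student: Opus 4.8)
The plan is to recast the conclusion as an almost-everywhere asymptotic for the centred counting sums
\[
S(N,t):=A(N,t)-\vphi(N)=\sum_{n=1}^{N}\bigl(\mathbf{1}_{E_n}(t)-\mu(E_n)\bigr),
\]
and to deduce it from a block-wise second-moment estimate together with the classical lemma of G\'al and Koksma. First I would dispose of the degenerate case: if $\vphi$ is bounded then $\sum_n\mu(E_n)<\infty$, so the qualitative Borel--Cantelli lemma puts $\mu$-a.e.\ $t$ in only finitely many $E_n$, whence $A(N,t)$ is eventually constant and $|S(N,t)|$ is bounded --- compatible with the claimed error term. So we may assume $\vphi(N)\to\infty$. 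Put $C:=\sum_{k\ge 1}C_k<\infty$.

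The one substantive computation is the \emph{block variance bound}: for all integers $M<M'$,
\[
\int_X\bigl|S(M',t)-S(M,t)\bigr|^{2}\,\md\mu(t)\le (1+2C)\bigl(\vphi(M')-\vphi(M)\bigr).
\]
To prove it, expand the square to get $\sum_{M<m,n\le M'}\bigl(\mu(E_m\cap E_n)-\mu(E_m)\mu(E_n)\bigr)$. The diagonal terms contribute at most $\sum_{M<n\le M'}\mu(E_n)=\vphi(M')-\vphi(M)$. For $m<n$ the hypothesis gives $\mu(E_m\cap E_n)-\mu(E_m)\mu(E_n)\le\mu(E_n)C_{n-m}$, so summing over $m$ first and then over $n$ bounds the off-diagonal contribution by $2\sum_{M<n\le M'}\mu(E_n)\sum_{k\ge 1}C_k=2C(\vphi(M')-\vphi(M))$; adding the two estimates gives the claim. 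This is the only point where the quasi-independence hypothesis and the summability of $(C_n)$ enter.

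With this in hand, the statement is precisely the output of the G\'al--Koksma lemma applied to $f_n:=\mathbf{1}_{E_n}-\mu(E_n)$ (which have mean zero) and the non-decreasing function $\vphi$: a block-wise bound $\int_X|\sum_{M<n\le M'}f_n|^{2}\,\md\mu\le K\bigl(\vphi(M')-\vphi(M)\bigr)$ forces $\sum_{n\le N}f_n=\mathscr{O}\bigl(\vphi(N)^{1/2}(\log\vphi(N))^{3/2+\veps}\bigr)$ for $\mu$-a.e.\ $t$ and every $\veps>0$. Internally that lemma is proved by splitting $\N$ into dyadic scales $\{N:2^{p}\le\vphi(N)<2^{p+1}\}$, subdividing each scale into roughly $p^{1+\veps}$ consecutive sub-blocks along which $\vphi$ grows by at most $2^{p}p^{-1-\veps}$, using Chebyshev's inequality and the block bound to control $|S|$ at the sub-block endpoints, and a Rademacher--Menshov maximal inequality (again via the block bound) to control the oscillation of $S$ within each sub-block; the resulting exceptional probabilities are summable over all sub-blocks and scales, so Borel--Cantelli and the monotonicity of $N\mapsto A(N,t)$ finish the argument, after intersecting the relevant full-measure sets over a sequence $\veps\downarrow 0$.

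I expect the block variance bound to be routine, and the real difficulty to be concealed inside the G\'al--Koksma step, namely extracting the sharp logarithmic exponent $3/2+\veps$. A crude Chebyshev estimate along a single lacunary subsequence $\{N_k\}$ controls $S$ only at the $N_k$, with an error of order $\vphi(N_{k+1})-\vphi(N_k)$, which is too large unless the gaps are of size $\asymp\vphi(N_k)^{1/2}(\log\vphi(N_k))^{3/2}$; controlling the fluctuation of $S$ across such gaps is what forces the two-layer (scale plus sub-block) decomposition and the Rademacher--Menshov inequality, whose $(\log)^{2}$ loss is the source of the power $3/2$, the extra $\veps$ being needed only so that the Borel--Cantelli series converges.
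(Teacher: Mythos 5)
The paper does not prove Theorem~\ref{Teo:PhilipBC}: it is quoted verbatim (as \cite[Theorem~2]{Phi1967}) and used as a black box in the proof of Theorem~\ref{Teo:PhilippLuroth}, so there is no internal proof to compare your proposal against.

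That said, your sketch is a faithful reconstruction of Philipp's actual argument. The block variance bound is computed correctly and is indeed the only place where the quasi-independence hypothesis $\mu(E_n\cap E_m)\le\mu(E_n)\mu(E_m)+\mu(E_n)C_{n-m}$ and the summability of $(C_n)$ enter: expanding $\int_X|S(M')-S(M)|^2\,\md\mu$ gives the double sum of covariances, the diagonal contributes $\le\vphi(M')-\vphi(M)$, and the off-diagonal contributes $\le 2C(\vphi(M')-\vphi(M))$ by summing the bound over the gap $k=n-m$. The resulting quantity $\Phi(M,N)=(1+2C)\bigl(\vphi(M+N)-\vphi(M)\bigr)$ is superadditive, which is exactly what the G\'al--Koksma lemma needs, and that lemma delivers the stated $\vphi(N)^{1/2}(\log\vphi(N))^{3/2+\veps}$ error term for a.e.\ $t$; your description of its internal mechanism (dyadic scales in $\vphi$, sub-blocks of roughly equal $\vphi$-increment, Chebyshev at sub-block endpoints, a Rademacher--Menshov-type chaining bound inside each sub-block, summable exceptional probabilities, monotonicity to pass from checkpoints to all $N$) is an accurate account of where the logarithmic power comes from. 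The one cosmetic quibble is the treatment of the degenerate case $\sup_N\vphi(N)<\infty$: when that supremum is $\le 1$ the error term $\vphi(N)^{1/2}(\log\vphi(N))^{3/2+\veps}$ is non-positive, so the displayed conclusion is vacuous rather than ``compatible''; but this is a matter of interpretation of the big-$\mathscr{O}$ notation and not a substantive gap.
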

We follow rather closely W. Philipp's steps and apply Theorem \ref{Teo:PhilipBC} on Lüroth series to conclude the next result:
\begin{proposition}\label{Teo:PhilippLuroth}
Let $(I_n)_{n\geq 1}$ be a sequence of intervals in $[0,1]$. For each $N\in\N$ and each $t\in[0,1]$ define 
\[
A(N,t):=\#\{n\in\intent{1,N}:\scL^n(t)\in I_n\} \quad\text{and}\quad
\vphi(N):=\sum_{n=1}^N \leb(I_n).
\]
Then, almost every $t\in [0,1]$ verifies the following statement: for all $N \in \N$ and all $\veps>0$ we have
\[
A(N,t) = \vphi(N) + \mathscr{O}\left( \vphi(N)^{\frac{1}{2}}  (\log \vphi(N) )^{\frac{3}{2}+\veps} \right).
\]
\end{proposition}

\begin{lemma}\label{Le:Dist-A}
Let $\scI_k$ be a fundamental interval of a given order $k\in\Na$. Then, for every interval $(\alpha,\beta]\subseteq (0,1]$ and every $n\in\Na_{\geq k}$, we have
\[
\leb\left( \scL^{-n}\left[(\alpha,\beta]\right]\cap \scI_k\right)=\leb\left( (\alpha,\beta] \right)\leb(\scI_k).
\]
\end{lemma}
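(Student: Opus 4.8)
The plan is to reduce the claim to the self-similar structure of Lüroth fundamental intervals. The key observation is that $\scL^n$ restricted to any fundamental interval of order $n$ is an \emph{affine surjection} onto $(0,1]$; in particular it is measure-scaling with ratio $1/|\scI_n(\bfc)| = \prod_{j=1}^n c_j(c_j-1)$ (Corollary~\ref{Co:Luroth-02}). Hence, for any fundamental interval $\scI_n(\bfc)$ and any Borel set $B\subseteq(0,1]$, one has
\[
\leb\bigl(\scL^{-n}[B]\cap \scI_n(\bfc)\bigr) = |\scI_n(\bfc)|\,\leb(B),
\]
because $\scL^{-n}$ restricted so as to land in $\scI_n(\bfc)$ is precisely the affine map $\scT^n_{\bfc}$, whose Jacobian is the constant $|\scI_n(\bfc)|$. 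This is the core computation, and it is genuinely routine once one unwinds the definitions of $\scI_n(\bfc)$ and $\scT^n_{\bfc}$ recorded in \cref{subsec:Lurprelim}.

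Given that, I would prove the lemma as follows. Fix the fundamental interval $\scI_k$, say $\scI_k = \scI_k(\bfc)$ with $\bfc\in\scD^k$, fix $(\alpha,\beta]\subseteq(0,1]$, and fix $n\geq k$. First I would note that $\scL^{-n}[(\alpha,\beta]]\cap\scI_k$ decomposes along fundamental intervals of order $n$: writing $\scD^{n-k}$ for the possible continuations,
\[
\scL^{-n}\bigl[(\alpha,\beta]\bigr]\cap\scI_k(\bfc)
= \bigsqcup_{\bfd\in\scD^{n-k}} \Bigl(\scL^{-n}\bigl[(\alpha,\beta]\bigr]\cap\scI_n(\bfc,\bfd)\Bigr),
\]
a disjoint union by Lemma~\ref{Le:Luroth-01}(i) and its corollary. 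On each piece, $\scL^n$ agrees with the inverse of the affine map $\scT^n_{(\bfc,\bfd)}$, so $\scL^{-n}[(\alpha,\beta]]\cap\scI_n(\bfc,\bfd) = \scT^n_{(\bfc,\bfd)}[(\alpha,\beta]]$, whose Lebesgue measure is $|\scI_n(\bfc,\bfd)|\,(\beta-\alpha)$. Summing over $\bfd$ and using $\sum_{\bfd\in\scD^{n-k}}|\scI_n(\bfc,\bfd)| = |\scI_k(\bfc)|$ (which is just the statement that the order-$n$ refinements of $\scI_k(\bfc)$ partition it up to a null set, equivalently $\scL^{n-k}$ maps $\scI_k(\bfc)$ onto $(0,1]$) gives
\[
\leb\bigl(\scL^{-n}[(\alpha,\beta]]\cap\scI_k(\bfc)\bigr) = |\scI_k(\bfc)|(\beta-\alpha) = \leb\bigl((\alpha,\beta]\bigr)\leb(\scI_k),
\]
as desired.

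I do not expect a serious obstacle here; the only point that needs a little care is the bookkeeping that $\scL^n$ restricted to $\scI_n(\bfc,\bfd)$ really is the branch whose inverse is $\scT^n_{(\bfc,\bfd)}$ — this follows from \eqref{eq:lurothdigits} and the definition of $\scT^n_{\bfc}$ as the inverse of $\scL^n|_{\scI_n(\bfc)}$ — and the harmless fact that the countably many endpoints removed or added when passing between half-open and closed intervals contribute zero Lebesgue measure, so the identity is unaffected. An alternative, even shorter, route is to invoke directly that $\scT^n_{(\bfc,\bfd)}$ is affine with slope $|\scI_n(\bfc,\bfd)|$ and that $\{\scI_n(\bfc,\bfd):\bfd\in\scD^{n-k}\}$ tiles $\scI_k(\bfc)$, bypassing the explicit preimage decomposition; either way the argument is one paragraph of elementary measure theory.
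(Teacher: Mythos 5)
Your proof is correct. The paper itself offers no argument at all for this lemma—it simply cites equation (3.4) of Jager and de Vroedt (1969)—so a genuine comparison is not possible; what you have produced is a self-contained derivation where the paper delegates to the literature. Your route via the affine-branch structure is the natural one: decompose $\scI_k(\bfc)$ as the countable disjoint union of $\scI_n(\bfc,\bfd)$ over $\bfd\in\scD^{n-k}$, observe that on each piece $\scL^n$ is the inverse of the affine map $\scT^n_{(\bfc,\bfd)}$ with constant slope $|\scI_n(\bfc,\bfd)|$, so the preimage of $(\alpha,\beta]$ inside that piece has measure $|\scI_n(\bfc,\bfd)|(\beta-\alpha)$, and then sum using $\sum_{\bfd}|\scI_n(\bfc,\bfd)|=|\scI_k(\bfc)|$. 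All the ingredients you invoke (Lemma~\ref{Le:Luroth-01}, Corollary~\ref{Co:Luroth-02}, the definition of $\scT^n_{\bfc}$) are in the paper, and in the Galambos convention used here every $x\in(0,1]$ has a unique infinite Lüroth expansion, so the decomposition of $\scI_k(\bfc)$ into order-$n$ children is exact and your caveat about endpoints is not even needed. The only stylistic quibble is that $\scD^{n-k}$ is countably infinite rather than finite, so you are relying on countable additivity of Lebesgue measure and convergence of the series $\sum_{\bfd}|\scI_n(\bfc,\bfd)|$—both trivially true here since the intervals are disjoint and contained in $\scI_k(\bfc)$—but it would not hurt to say so explicitly. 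Your version makes the mechanism transparent at essentially no extra cost, which is arguably preferable to the bare citation.
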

\begin{proof}
See (3.4) in \cite{JagDev1969}.
\end{proof}
\begin{lemma}\label{Le:Dist-B}
Let $I,J\subseteq [0,1]$ be two intervals, $n\in\N$ and let $[\frac{n}{2}]$ be the integer part of $\frac{n}{2}$, then
\[
\leb\left( I\cap \Lu^{-n}[J]\right) 
\leq 
\leb(I)\leb(J) + \frac{2}{2^{[\frac{n}{2}]}} \leb(J).
\]
\end{lemma}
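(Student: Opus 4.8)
The plan is to reduce the claim to Lemma \ref{Le:Dist-A} by ``breaking'' the interval $I$ into fundamental intervals of level $k=[\frac{n}{2}]$, accepting a small error coming from the (at most two) fundamental intervals of level $k$ that meet the boundary of $I$.

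First I would set $k=[\frac{n}{2}]$ and consider the collection $\mathcal{C}$ of all fundamental intervals $\scI_k(\bfc)$, $\bfc\in\scD^k$, that are entirely contained in $I$. Let $I'=\bigcup_{\scI_k(\bfc)\subseteq I}\scI_k(\bfc)$; since the level-$k$ fundamental intervals partition $(0,1]$, the set $I\setminus I'$ is covered by at most two fundamental intervals of level $k$, each of Lebesgue measure at most $2^{-k}$ by Corollary \ref{Co:Luroth-02}. Hence $\leb(I\setminus I')\leq 2\cdot 2^{-k}=2/2^{[n/2]}$.

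Next I would estimate the two pieces. For the main piece, since $n\geq k$, Lemma \ref{Le:Dist-A} applies to each $\scI_k(\bfc)\subseteq I$ with the interval $J$ (after noting that we may take $J=(\alpha,\beta]\subseteq(0,1]$ up to a null set; the point $0$ and the endpoint conventions only affect countably many points and hence do not change any Lebesgue measure): summing over the disjoint fundamental intervals making up $I'$ gives
\[
\leb\bigl(I'\cap\Lu^{-n}[J]\bigr)=\sum_{\scI_k(\bfc)\subseteq I}\leb(J)\,\leb(\scI_k(\bfc))=\leb(J)\,\leb(I')\leq \leb(J)\,\leb(I).
\]
For the remaining piece, the trivial bound $\leb\bigl((I\setminus I')\cap\Lu^{-n}[J]\bigr)\leq\leb(I\setminus I')\leq 2/2^{[n/2]}$ together with the factor $\leb(J)\le 1$ — more precisely, I would observe $\leb((I\setminus I')\cap\Lu^{-n}[J])\le \min\{\leb(I\setminus I'),\ \leb(\Lu^{-n}[J])\}$ and that $\leb(\Lu^{-n}[J])=\leb(J)$ since $\Lu$ preserves $\leb$, so this piece is at most $\frac{2}{2^{[n/2]}}\leb(J)$ — yields the stated error term. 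Adding the two contributions gives
\[
\leb\bigl(I\cap\Lu^{-n}[J]\bigr)\leq \leb(I)\leb(J)+\frac{2}{2^{[n/2]}}\leb(J).
\]

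I do not expect a serious obstacle here; the only mildly delicate point is bounding $\leb((I\setminus I')\cap\Lu^{-n}[J])$ by $\frac{2}{2^{[n/2]}}\leb(J)$ rather than merely $\frac{2}{2^{[n/2]}}$, which is why one should use that $\Lu^{-n}[J]$ has measure exactly $\leb(J)$ (invariance of $\leb$ under $\Lu$, from \cite{JagDev1969}) and intersect-bound against it. One should also make sure the endpoint/boundary bookkeeping for $I$ and $J$ (half-open versus closed intervals, the point $0$) is handled by the standard remark that these discrepancies are countable and Lebesgue-null, exactly as noted after Proposition \ref{Teo:LuLambda}.
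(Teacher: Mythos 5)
Your decomposition is exactly the paper's: set $k=[n/2]$, split $I$ into the union $I'$ of the level-$k$ fundamental intervals it fully contains plus at most two boundary pieces, and apply Lemma \ref{Le:Dist-A} to $I'$. The gap is in the final estimate of the error term.

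You claim that $\leb\bigl((I\setminus I')\cap\Lu^{-n}[J]\bigr)\le \min\bigl\{\leb(I\setminus I'),\ \leb(\Lu^{-n}[J])\bigr\}=\min\bigl\{2\cdot 2^{-k},\ \leb(J)\bigr\}$ implies that this piece is at most $\frac{2}{2^{k}}\leb(J)$. That implication fails: for nonnegative reals $a,b\le 1$ the inequality $\min\{a,b\}\le ab$ is false in general (take $a=b=\tfrac12$), and indeed as soon as $k\ge 2$ and $\leb(J)\le 2\cdot 2^{-k}$ the minimum equals $\leb(J)$, which is strictly larger than $\frac{2}{2^k}\leb(J)$. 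So neither the trivial bound nor the min-trick gives the factor $\leb(J)$ that you need in the error term.

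The correct way — and what the paper does — is to apply Lemma \ref{Le:Dist-A} once more, to the (at most two) \emph{full} fundamental intervals $\scI_k^\alpha,\scI_k^\beta$ that cover $I\setminus I'$. Since each is a fundamental interval of order $k\le n$, Lemma \ref{Le:Dist-A} gives $\leb\bigl(\scI_k^{\alpha}\cap\Lu^{-n}[J]\bigr)=\leb(\scI_k^{\alpha})\leb(J)\le 2^{-k}\leb(J)$ and likewise for $\scI_k^{\beta}$, so
\[
\leb\bigl((I\setminus I')\cap\Lu^{-n}[J]\bigr)
\le \leb\bigl(\scI_k^{\alpha}\cap\Lu^{-n}[J]\bigr)+\leb\bigl(\scI_k^{\beta}\cap\Lu^{-n}[J]\bigr)
\le \frac{2}{2^{k}}\leb(J).
\]
With that fix your argument is complete. (The paper also handles the endpoint $\alpha=0$ by a limiting argument since $0$ has no fundamental interval; you wave this away via ``null set'' considerations, which is acceptable but should be stated with a bit more care since $\scI_k^{\alpha}$ is undefined at $\alpha=0$, not merely off by a null set.)
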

\begin{proof}
We may further suppose, without losing any generality, that $I$ and $J$ are left open and right closed intervals. Write $I=(\alpha,\beta]$. Assume that $\alpha>0$. Let $\scI_k^{\alpha}$ and $\scI_k^{\beta}$ be the fundamental intervals of order $k\colon= [\frac{n}{2}]$ containing $\alpha$ and $\beta$, respectively. Put
\[
I_0:=I\cap \scI_k^{\alpha}, \quad I_2:=I\cap \scI_k^{\beta}, \quad I_1:=I\setminus(I_0\cup I_1).
\]
If $\scI_k^{\alpha}=\scI_k^{\beta}$, then $I\subseteq \scI_k^{\alpha}$ and, by Lemma \ref{Le:Dist-A},
\begin{align*}
\leb\left(I\cap\scL^{-n}[J]\right) &\leq \leb\left(\scI_k^{\alpha}\cap \scL^{-n} [J]\right) \nonumber\\
&= \leb(\scI_k^{\alpha})\leb(J) \nonumber\\
&\leq \frac{1}{2^k}\leb(J)\leq \leb(I)\leb(J) + \frac{2}{2^k} \leb(J).
\end{align*}

Assume now that $\scI_k^{\alpha}\neq \scI_k^{\beta}$. Since $I_1$ is a union of fundamental intervals of order $k$, we may use Lemma \ref{Le:Dist-A} to obtain $\leb(I_1\cap \scL^{-n}[J])=\leb(I_1)\leb(J)$. Hence, 
\begin{align*}
\leb(I\cap \scL^{-n}[J]) &= \leb\left(I_0\cap \scL^{-n}[J]\right) + \leb\left(I_1\cap \scL^{-n}[J]\right) + \leb\left(I_2\cap \scL^{-n}[J]\right)
 \nonumber\\
&\leq \leb\left(\scI_k^{\alpha}\cap \scL^{-n}[J]\right) + \leb(I_1)\leb(J) + \leb\left(\scI_k^{\beta}\cap \scL^{-n}[J]\right) \nonumber\\
&=\leb(I_1)\leb(J) + \left( \leb\left(\scI_k^{\alpha}\right)+\leb\left( \scI_k^{\beta}\right)\right) \leb(J) \nonumber\\
&\leq \leb(I)\leb(J) + \frac{2}{2^k} \leb(J).
\end{align*}
This shows the lemma when $\alpha>0$.

If $\alpha=0$, consider a strictly decreasing sequence $(\alpha_m)_{m\geq 1}$ of real numbers contained in $I=(0,\beta]$ converging to $0$. For each $m\in\Na$, the previous argument shows that
\[
\leb\left( [\alpha_m,\beta) \cap \scL^{-n}[J]\right) 
\leq 
\leb([\alpha_m,\beta))\leb(J) + \frac{2}{2^{[\frac{n}{2}]}} \leb(J).
\]
The result follows by considering $m\to\infty$.
\end{proof}
\begin{proof}[Proof of Proposition \ref{Teo:PhilippLuroth}]
Define the sequence $(E_n)_{n\geq 1}$ by $E_n=\scL^{-n}[I_n]$ for all $n\in\N$. Therefore, when $m,n\in \N$ satisfy $m<n$, Lemma \ref{Le:Dist-A} and the $\scL$-invariance of $\leb$ give
\begin{align*}
\leb(E_n\cap E_m) &= \leb\left( \scL^{-n}[I_n]\cap \scL^{-m}[I_m]\right)\\
&= \leb\left( \scL^{-(n-m)}[I_n]\cap I_m \right) \\
&\leq \leb(I_n)\leb(I_m) + \frac{2}{2^{[\frac{n-m}{2}]} }\leb(I_m) \\
& = \leb(E_n)\leb(E_m) + \frac{2}{2^{[\frac{n-m}{2}]} }\leb(E_m).
\end{align*}
The result follows now from Theorem \ref{Teo:PhilipBC}, because $\sum_n \frac{2}{2^{[n/2]}}$ converges.
\end{proof}
For any sequence of positive real numbers $\bfr=(r_n)_{n\geq 1}$ define
\[
E(\bfr)=\{ y\in [0,1]: |\Lu^n(x)-\Lu^n(y)|<r_n \text{ for infinitely many } n\in\Na \}.
\]
Then, we have 
\[
\leb(E(\bfr)) = 
\begin{cases}
0, \text{ if } \sum_n r_n<+\infty, \\
1, \text{ if } \sum_n r_n=+\infty.
\end{cases}
\]
The convergence part follows from the Borel-Cantelli lemma and the divergence part, from Proposition \ref{Teo:PhilippLuroth}. In particular, if $r_n=n^{-1}$ for all $n\in\N$, for almost every $y\in [0,1]$ there exists an increasing sequence of integers $(n_j)_{j\geq 1}$ such that $|\scL^{n_j}(x)-\scL^{n_j}(y)|<r_j$ holds for all $j\in\Na$. In other words, almost every $y\in [0,1]$ satisfies
\[
\liminf_{n\to\infty} \left|\Lu^{n}(x)- \Lu^{n}(y)\right|=0.
\]
Hence, by the definition of distal pairs, $\leb(\noD_{\scL}(x))=0$.
\subsection{The distal pairs of $x$ have full dimension}
The proof consists of two parts. First, we construct a subset of $\noD_{\scL}(x)$. Then, we estimate its Hausdorff dimension.

\subsubsection{A collection of distal pairs}

Given a collection $\bfe=(\bfe^{m,n})_{m,n\geq 2}$ of sequences on $\scD$ indexed on $\Na_{\geq 2}\times \Na_{\geq 2}$, we define for each $m,n\in\Na$ the set
\[
F_m^n(\bfe):=\left\{\scB_m(\bfb,\bfe^{m,n}):\bfb\in \intent{2,n}^{\N}\right\}
\]
(see \cref{sec:symbolic}).
\begin{lemma}\label{Le:DisC}
Let $\bfe$ be as above. For any $m_0,n_0\in\N_{\geq 2}$, the following set is dense in $\scD^{\Na}$:
\[
\bigcup_{\substack{m\geq m_0 \\ n\geq n_0}} F_m^n(\bfe).
\]
Moreover, the set
\[
\bigcup_{\substack{m\geq m_0 \\ n\geq n_0}} \Lambda\left[ F_m^n(\bfe)\right].
\]
is dense in $(0,1]$.
\end{lemma}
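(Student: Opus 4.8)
The plan is to prove the first (symbolic) assertion directly and then deduce the second one by pushing it forward through $\Lambda$. The underlying principle is that a continuous surjection maps dense sets to dense sets, and that the image of a union is the union of the images; both ingredients are provided by Proposition~\ref{Teo:LuLambda}.

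For the symbolic statement, I would use the standard basis of the product topology on $\scD^{\Na}$, the cylinders $[c_1,\ldots,c_k]=\{\mathbf{z}=(z_j)_{j\geq 1}\in\scD^{\Na}:z_j=c_j\text{ for }1\leq j\leq k\}$ with $k\in\Na$ and $(c_1,\ldots,c_k)\in\scD^k$, and check that each of them meets $F_m^n(\bfe)$ for a suitable pair $m\geq m_0$, $n\geq n_0$. Given a prefix $(c_1,\ldots,c_k)$, I would first choose $n\geq\max\{n_0,c_1,\ldots,c_k\}$, so that $c_1,\ldots,c_k\in\intent{2,n}$, and then choose $m\geq m_0$ with $m>k$. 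The key observation is that, as $m>k$, none of the indices $1,\ldots,k$ is a multiple of $m$; hence in $\scB_m(\bfb,\bfe^{m,n})$ (the $R$-shuffle with $R=\{jm:j\in\Na\}$) the first $k$ coordinates are precisely the first $k$ entries of $\bfb$. Taking $\bfb\in\intent{2,n}^{\Na}$ with $b_j=c_j$ for $1\leq j\leq k$ and $b_j=2$ for $j>k$ then produces a point of $F_m^n(\bfe)$ lying in $[c_1,\ldots,c_k]$, so $\bigcup_{m\geq m_0,\,n\geq n_0}F_m^n(\bfe)$ is dense in $\scD^{\Na}$.

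For the analytic statement, let $V\subseteq(0,1]$ be open and non-empty. By Proposition~\ref{Teo:LuLambda} the map $\Lambda$ is continuous and surjective, so $\Lambda^{-1}[V]$ is open and non-empty; by the previous paragraph it contains some $\mathbf{z}\in F_m^n(\bfe)$ with $m\geq m_0$ and $n\geq n_0$, whence $\Lambda(\mathbf{z})\in V\cap\Lambda[F_m^n(\bfe)]$. Since $\Lambda\big[\bigcup_{m\geq m_0,\,n\geq n_0}F_m^n(\bfe)\big]=\bigcup_{m\geq m_0,\,n\geq n_0}\Lambda[F_m^n(\bfe)]$, this set is dense in $(0,1]$.

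I do not expect a genuine obstacle here; the argument is elementary. The only point that requires care is the bookkeeping of the $R$-shuffle: one must make sure that taking $m$ strictly larger than the length of the prescribed prefix forces that whole prefix to be carried by the \emph{free} coordinates (those bearing $\bfb$), and that taking $n$ large enough makes every entry of the prescribed prefix an admissible value for $\bfb$.
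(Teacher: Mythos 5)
Your proof is correct and follows essentially the same route as the paper: the paper's proof simply cites ``the definition of the product topology'' for the first assertion and Proposition~\ref{Teo:LuLambda} for the second, which is exactly what you spell out (cylinders as basic opens, $m>k$ so the prescribed prefix lands entirely in the $\bfb$-coordinates, $n$ large enough to make those entries admissible, then pushing density forward through the continuous surjection $\Lambda$). Your write-up is more explicit than the paper's one-liner, but there is no substantive difference in method.
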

\begin{proof}
The first assertion follows from the definition of the product topology. The second assertion follows from the first one and Proposition \ref{Teo:LuLambda}.
\end{proof}
The next lemma tells us how to choose an appropriate $\bfe$.
\begin{lemma}\label{Le:LuDisD}
Let $E\in \N_{\geq 6}$, $m\in\N_{\geq 2}$, and $\bfc=(c_1,\ldots,c_m)\in\clD^m$. If $\bfb=(b_1,\ldots,b_{m-1},e)\in \intent{2,E}^m$ satisfies
\[
3\leq e\leq E-1 \;\text{ and }\; 2\leq |c_m-e|,
\]
then $d\left( \scI_m(\bfc),\scI_m(\bfb) \right) \geq E^{-2m}$.
\end{lemma}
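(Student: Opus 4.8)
The plan is to estimate the distance between the two fundamental intervals $\scI_m(\bfc)$ and $\scI_m(\bfb)$ by splitting into the case where the prefixes of $\bfc$ and $\bfb$ agree and the case where they do not, and in the former case separating the fundamental intervals at level $m$ directly by comparing the last digits $c_m$ and $e$. Recall from Lemma \ref{Le:Luroth-01}(i) that
\[
\scI_m(\bfc) = \left( \left\langle c_1,\ldots,c_m\right\rangle, \left\langle c_1,\ldots,c_m+1\right\rangle\right],
\]
and the analogous formula for $\scI_m(\bfb)$, and that by Corollary \ref{Co:Luroth-02} each of these intervals has length $\prod_{j=1}^m (c_j(c_j-1))^{-1}$, respectively $\prod_{j=1}^m (b_j(b_j-1))^{-1}$, which is at least $E^{-2m}$ since every digit is at most $E$.

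First I would treat the case $(c_1,\ldots,c_{m-1})=(b_1,\ldots,b_{m-1})$. Then $\scI_{m-1}(\pref(\bfc,m-1))=\scI_{m-1}(\pref(\bfb,m-1))$, and inside this common level-$(m-1)$ interval the level-$m$ subintervals $\scI_m(\bfc)$ and $\scI_m(\bfb)$ are obtained by applying the single affine map $\scT^{m-1}_{\pref(\bfc,m-1)}$, whose slope is $\prod_{j=1}^{m-1}(c_j(c_j-1))^{-1}\geq E^{-2(m-1)}$, to the level-$1$ intervals $\scI_1(c_m)$ and $\scI_1(e)$. Since $|c_m-e|\geq 2$, the intervals $\scI_1(c_m)=(1/c_m,1/(c_m-1)]$ and $\scI_1(e)=(1/e,1/(e-1)]$ are separated: if $c_m>e$ the gap from the right endpoint of $\scI_1(c_m)$ to the left endpoint of $\scI_1(e)$ is at least $\tfrac1{e}-\tfrac1{c_m-1}\geq \tfrac1{e}-\tfrac1{e+1}=\tfrac1{e(e+1)}\geq E^{-2}$ (using $e\leq E-1$ so $e+1\leq E$), and symmetrically if $e>c_m$ the gap is at least $\tfrac1{c_m}-\tfrac1{e-1}\geq\tfrac1{c_m}-\tfrac1{c_m+1}\geq E^{-2}$ (using $c_m\leq E$). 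Multiplying by the slope gives $d(\scI_m(\bfc),\scI_m(\bfb))\geq E^{-2(m-1)}\cdot E^{-2}=E^{-2m}$.

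Next, the case where the prefixes differ: let $k\in\intent{1,m-1}$ be the least index with $c_k\neq b_k$. Then $\scI_m(\bfc)\subseteq\scI_k(\pref(\bfc,k))$ and $\scI_m(\bfb)\subseteq\scI_k(\pref(\bfb,k))$, and these two level-$k$ fundamental intervals are disjoint (Lemma \ref{Le:Luroth-01}(i) or property $(iii)$ of the tree-like family), hence
\[
d(\scI_m(\bfc),\scI_m(\bfb))\geq d\bigl(\scI_k(\pref(\bfc,k)),\scI_k(\pref(\bfb,k))\bigr).
\]
Now one checks, again via Lemma \ref{Le:Luroth-01}(i), that two disjoint level-$k$ fundamental intervals with digits in $\intent{2,E}$ are separated by at least the length of a level-$k$ interval, which is $\geq E^{-2k}\geq E^{-2(m-1)}\geq E^{-2m}$; concretely, both intervals sit inside the common level-$(k-1)$ interval (empty prefix if $k=1$), where the single affine map $\scT^{k-1}$ of slope $\geq E^{-2(k-1)}$ carries them to level-$1$ intervals $\scI_1(c_k)$ and $\scI_1(b_k)$, and since $c_k\neq b_k$ these level-$1$ intervals, being of the form $(1/u,1/(u-1)]$, are separated by at least $\tfrac1{u(u+1)}\geq E^{-2}$ for the appropriate $u\leq E$, exactly as in the previous paragraph. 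Combining the two cases proves $d(\scI_m(\bfc),\scI_m(\bfb))\geq E^{-2m}$.

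The main obstacle is purely bookkeeping: keeping the exponents of $E$ straight when one affine contraction of slope roughly $E^{-2(m-1)}$ is applied to a level-$1$ gap of size roughly $E^{-2}$, and making sure the bound $e\leq E-1$ (rather than $e\leq E$) is used precisely where it is needed, namely to control $\tfrac1e-\tfrac1{e+1}$ by $E^{-2}$ when $c_m>e$. The role of the hypotheses is now transparent: $|c_m-e|\geq2$ guarantees the level-$1$ separation in the equal-prefix case, $3\leq e$ ensures $e-1\geq2$ so $\scI_1(e)$ is a genuine fundamental interval of $\scL$, and $E\geq6$ gives enough room for all the digits of $\bfc$ (which lie in $\clD=\N_{\geq2}$ but are not a priori bounded by $E$ — so in the differing-prefix case one actually only uses that $\bfb$'s digits are bounded by $E$, and the bound there comes from the $\scI_k(\pref(\bfb,k))$ side). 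I would state the proof so that the quantitative separation $\geq E^{-2m}$ is read off uniformly in both cases from the identity $d(\scT^{j}(J_1),\scT^{j}(J_2))=\operatorname{slope}(\scT^j)\cdot d(J_1,J_2)$ for affine $\scT^j$.
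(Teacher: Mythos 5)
Your Case 1 (equal prefixes) is essentially the paper's argument restricted to that case, and it is correct; note in particular that the bound $c_m\leq E$ in the subcase $e>c_m$ is legitimate because then $c_m\leq e-2\leq E-3$.

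Your Case 2 (differing prefixes) contains a genuine error. You claim that if $k$ is the first index with $c_k\neq b_k$, then the level-$1$ intervals $\scI_1(c_k)$ and $\scI_1(b_k)$ are ``separated by at least $\tfrac{1}{u(u+1)}\geq E^{-2}$'' merely because $c_k\neq b_k$. This is false: the level-$1$ fundamental intervals tile $(0,1]$, so consecutive ones are disjoint but have distance $0$ --- for instance $\scI_1(2)=(\tfrac12,1]$ and $\scI_1(3)=(\tfrac13,\tfrac12]$. The hypothesis $|c_m-e|\geq 2$ gives you digit-separation only at the last coordinate; nothing prevents $|c_k-b_k|=1$ for $k<m$. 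So the inequality $d(\scI_k(\pref(\bfc,k)),\scI_k(\pref(\bfb,k)))\geq E^{-2k}$ on which the whole case rests is simply not true in general.

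The way to repair this --- and this is what the paper's one-line proof is really doing --- is to observe that the constraint $3\leq e\leq E-1$ guarantees that both $\scI_m(b_1,\ldots,b_{m-1},e-1)$ and $\scI_m(b_1,\ldots,b_{m-1},e+1)$ are legitimate level-$m$ intervals with all digits in $\intent{2,E}$, sitting respectively to the right and to the left of $\scI_m(\bfb)$ inside $\scI_{m-1}(b_1,\ldots,b_{m-1})$, and each of length at least $E^{-2m}$ by Corollary \ref{Co:Luroth-02}. If the prefixes differ, $\scI_m(\bfc)$ is disjoint from the whole of $\scI_{m-1}(b_1,\ldots,b_{m-1})$, hence lies entirely on one side of it, hence is separated from $\scI_m(\bfb)$ by at least one of these two flanking intervals. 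If the prefixes agree, the condition $|c_m-e|\geq 2$ forces the same conclusion. Either way $d(\scI_m(\bfc),\scI_m(\bfb))\geq |\scI_m(b_1,\ldots,b_{m-1},e+1)|\geq E^{-2m}$, with no need to track the first differing index. Incidentally this also corrects your reading of the hypothesis $e\geq 3$: it is not needed to make $\scI_1(e)$ a genuine interval (any $e\geq 2$ does that), but to make $e-1$ a legal digit so that the right-hand flanking interval exists.
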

\begin{proof}
The restriction on $E$ is imposed to ensure the existence of some $\bfb$ satisfying the hypotheses. The second condition on $e$ implies that the intervals $\scI_m(\bfc)$ and $\scI_m(\bfb)$ are not adjacent, hence
\[
d\left(\mathcal{I}_{m}(\bfc), \mathcal{I}_{m}(\bfb)\right) 
\geq |\mathcal{I}_{m}(b_1, \ldots, b_{m-1}, e+1)|
= \frac{1}{e(e+1)} \prod_{k=1}^{m-1} \frac{1}{b_k(b_k-1)} \nonumber\\
\geq \frac{1}{E^{2m}}.
\]
\end{proof}
For the rest of the proof, $E\in\N_{\geq 6}$ will remain fixed. For each $n\in\N_{\geq E}$ and each $m\in\N_{\geq 2}$, let $\bfe^{m,n}=\bfe^m = (e^m_j)_{j\geq 1}\in\intent{3,E-1}^\N$ be a sequence on $\scD$ such that $|a_{jm} - e_j^m|\geq 2$ for all $j\in\N$ and write $\bfe=(\bfe^{m,n})_{m,n}$. By Lemma \ref{Le:LuDisD}, for any $m\geq 2$ and any $\bfb=\sabu\in F_m^{n}(\bfe)$, the number $y=\Lambda(\bfb)\in(0,1]$ satisfies $|y-x|> E^{-2m}$. In fact, when $j\in \intent{0,m-1}$, we have $|\Lu^j(x) - \Lu^j(y)|\geq E^{-2(m-j)}$, so
\begin{equation}\label{Eq-Cota-Distal}
|\Lu^j(x) - \Lu^j(y)|\geq \frac{1}{E^{2(m-j)}} \geq \frac{1}{E^{2m}}.
\end{equation}
Since $\scL^m(x)=\left\langle a_{m+1}, a_{m+2},\ldots\right\rangle$ and $\scL^m(y)=\left\langle b_{m+1}, \ldots,b_{2(m-1)-1},e_2^m,\ldots \right\rangle$, inequality \eqref{Eq-Cota-Distal} holds for $j=m$. Moreover, an inductive argument gives \eqref{Eq-Cota-Distal} for all $j\in\Na_0$. As a consequence, $\Lambda[F_m^n(x)]\subseteq \noD_{\scL}(x)$ and, since $m$ and $n$ were arbitrary,
\begin{equation}\label{Ec:DL01}
\bigcup_{\substack{m\geq 2 \\ n\geq E}}\Lambda\left[ F_m^n(\bfe)\right]\subseteq \noD_{\scL}(x).
\end{equation}
\subsubsection{Hausdorff dimension estimate}
We will use Lemma \ref{Le:GJL01} to show that the smaller set in \eqref{Ec:DL01} has full Hausdorff dimension. To be more precise, for any $s\in (0,1)$ we find some $m,n\in\Na$ such that $\dimh \Lambda(F_m^n(\bfe))\geq s$ which, due to \eqref{Ec:DL01}, implies $\dimh \noD_{\scL}(x)=1$.

Given $n,m\in\N_{\geq 2}$, write $\scA_0=\{[0,1]\}$ and for every $k\in\N$ and $\bfc=(c_1,\ldots,c_{k(m-1)})\in\intent{2,n}^{k(m-1)}$ define 
\[
A^k(\bfc) :=
A^k(\bfc;n,m) :=  \overline{\clI_{mk}}\left( \scB_m \left( \bfc,(e_1^m,e_2^m,\ldots, e_k^m)\right)\right)
\]
and 
\[
\scA_k\colon=\left\{ A^k(\bfc) \colon \bfc\in \intent{2,n}^{k(m-1)}\right\}.
\]
The limit set of $\scA(m,n)=\{\scA_k:k\in\Na_0\}$ is precisely $\Lambda[F_m^n(\bfe)]$.
\begin{proposition}\label{PROPO-HJGL-01}
For each $m\in\N_{\geq 2}$ and $n\in\N_{\geq E}$, there is a constant $B=B(m,n)>0$ such that every $k\in\N$, $\bfc\in\intent{2,n}^{k(m-1)}$ and $\bff,\bfb\in\intent{2,n}^{m-1}$ with $\bfb\neq \bff$ satisfy
\[
d\left( A^{k+1}(\bfc\bfb),A^{k+1}(\bfc\bff)\right) \geq B |A^k(\bfc)|.
\]
\end{proposition}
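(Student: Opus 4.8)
The plan is to reduce the claim to the position of the first coordinate at which $\bfb$ and $\bff$ disagree and then to estimate how far apart $A^{k+1}(\bfc\bfb)$ and $A^{k+1}(\bfc\bff)$ sit inside their common ancestor. Unfolding the shuffles, the words $\scB_m(\bfc\bfb,(e_1^m,\ldots,e_{k+1}^m))$ and $\scB_m(\bfc\bff,(e_1^m,\ldots,e_{k+1}^m))$ agree on their first $mk$ entries (which spell out $A^k(\bfc)$) and then carry the length-$m$ blocks $(b_1,\ldots,b_{m-1},e_{k+1}^m)$ and $(f_1,\ldots,f_{m-1},e_{k+1}^m)$; note the last entry is the same in both blocks. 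Let $i\in\intent{1,m-1}$ be least with $b_i\neq f_i$, and, by the symmetry of the statement, assume $b_i<f_i$. Denote by $u$ the common prefix, of length $mk+i-1$; every digit of $u$ lies in $\intent{2,n}$ since those of $\bfc$ do and $e_j^m\in\intent{3,E-1}\subseteq\intent{2,n}$ (because $n\geq E$). Hence $A^{k+1}(\bfc\bfb)\subseteq\overline{\scI}_{mk+i}(u,b_i)$ and $A^{k+1}(\bfc\bff)\subseteq\overline{\scI}_{mk+i}(u,f_i)$, and by Corollary~\ref{Co:Luroth-02},
\[
|\scI_{mk+i-1}(u)|\geq (n(n-1))^{-(i-1)}|A^k(\bfc)|\geq (n(n-1))^{-(m-2)}|A^k(\bfc)|.
\]

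I would then split into two cases. If $f_i\geq b_i+2$, recall that inside $\scI_{mk+i-1}(u)$ a larger next digit gives a subinterval further to the left, so $\scI_{mk+i}(u,b_i+1)$ lies strictly between $\scI_{mk+i}(u,f_i)$ and $\scI_{mk+i}(u,b_i)$; hence, using Corollary~\ref{Co:Luroth-02} again,
\[
d\bigl(A^{k+1}(\bfc\bfb),A^{k+1}(\bfc\bff)\bigr)\geq |\scI_{mk+i}(u,b_i+1)|=\frac{|\scI_{mk+i-1}(u)|}{b_i(b_i+1)}\geq\frac{|\scI_{mk+i-1}(u)|}{n(n+1)}.
\]
The delicate case is $f_i=b_i+1$: now $\scI_{mk+i}(u,f_i)$ and $\scI_{mk+i}(u,b_i)$ are adjacent and meet only at the point $p:=\langle u,b_i\rangle$, which by Lemma~\ref{Le:Luroth-01} is at once the right endpoint of the former and the left endpoint of the latter, so $\sup A^{k+1}(\bfc\bff)\leq p$. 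On the other hand, writing $(d_1,\ldots,d_{m-i})=(b_{i+1},\ldots,b_{m-1},e_{k+1}^m)\in\intent{2,n}^{m-i}$, we have $A^{k+1}(\bfc\bfb)=\scT_{(u,b_i)}^{mk+i}\bigl[\overline{\scI}_{m-i}(d_1,\ldots,d_{m-i})\bigr]$; since $\scT_{(u,b_i)}^{mk+i}$ is the increasing affine bijection $y\mapsto p+y\,|\scI_{mk+i}(u,b_i)|$ and $\min\overline{\scI}_{m-i}(d_1,\ldots,d_{m-i})=\langle d_1,\ldots,d_{m-i}\rangle\geq d_1^{-1}\geq n^{-1}$ (Lemma~\ref{Le:Luroth-01}), the infimum of $A^{k+1}(\bfc\bfb)$ is at least $p+|\scI_{mk+i}(u,b_i)|/n$. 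Therefore
\[
d\bigl(A^{k+1}(\bfc\bfb),A^{k+1}(\bfc\bff)\bigr)\geq\frac{|\scI_{mk+i}(u,b_i)|}{n}=\frac{|\scI_{mk+i-1}(u)|}{n\,b_i(b_i-1)}\geq\frac{|\scI_{mk+i-1}(u)|}{n^2(n-1)}.
\]

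Inserting the lower bound for $|\scI_{mk+i-1}(u)|$ and using $i\leq m-1$, both cases yield $d(A^{k+1}(\bfc\bfb),A^{k+1}(\bfc\bff))\geq B(m,n)\,|A^k(\bfc)|$ with, for instance, $B(m,n)=n^{-m}(n-1)^{-(m-1)}$; a short calculation using $n\geq E\geq 6$ confirms that the adjacent case furnishes the smaller of the two constants, so this choice works in both. I expect the adjacent subcase $f_i=b_i+1$ to be the only real obstacle: the two sibling intervals touch, so the separation must be manufactured from the next coordinate, and the construction is precisely what rescues us — that coordinate is always at most $n$ (it is either an entry of $\bfb$ or an entry of the auxiliary sequence $\bfe^m$, both valued in $\intent{2,n}$), which keeps $A^{k+1}(\bfc\bfb)$ a definite fraction $n^{-1}$ of $|\scI_{mk+i}(u,b_i)|$ away from the common endpoint. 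Everything else is bookkeeping of prefix lengths together with Corollary~\ref{Co:Luroth-02} and the linearity of the inverse branches $\scT^{\bullet}_{\bullet}$.
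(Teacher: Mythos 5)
Your proof is correct, and it takes a genuinely different route from the paper's. The paper's argument is considerably shorter: it observes that $A^{k+1}(\bfc\bfb)=\scT_{\bfg}^{km}[A^{1}_{e}(\bfb)]$ where $\bfg=\scB_m(\bfc,\bfe^m)$ and $A^1_e(\bfb)=\overline{\scI}_m(\bfb e)$ for $e=e^m_{k+1}$, that $\scT_{\bfg}^{km}$ is affine with slope exactly $|A^k(\bfc)|$, and hence that $d(A^{k+1}(\bfc\bfb),A^{k+1}(\bfc\bff))=|A^k(\bfc)|\,d(A^1_e(\bfb),A^1_e(\bff))$. Since for each fixed $e\in\intent{3,E-1}$ the finitely many compact sets $\{\overline{\scI}_m(\bfb e):\bfb\in\intent{2,n}^{m-1}\}$ are pairwise disjoint (the suffix $e\geq 3$ is precisely what prevents two of them from sharing an endpoint of a parent interval), the minimum of the pairwise distances over all $\bfb\neq\bff$ and all $e\in\intent{3,E-1}$ is a positive constant $B(m,n)$, and the proof ends. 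You instead locate the first index of disagreement, split into the non-adjacent case $|b_i-f_i|\geq 2$ (a full sibling interval sits between the two children) and the adjacent case $f_i=b_i+1$ (the sibling closures touch at a single point but $A^{k+1}$ is pushed away from that point because the next digit is $\leq n$), and extract an explicit $B(m,n)=n^{-m}(n-1)^{-(m-1)}$. What the paper buys is brevity and the avoidance of the case analysis; what you buy is an explicit, quantitative constant and a self-contained explanation of the role of the condition $e^m_j\in\intent{3,E-1}$, which in the paper is folded into the unproved-but-true assertion that the compact sets above are pairwise disjoint. One small remark: the paper's displayed identity literally reads $d(A^{k+1}(\bfc\bfb),A^{k+1}(\bfc\bff))=d(\scT_{\bfg}^{km}[A^1(\bfb)],\scT_{\bfg}^{km}[A^1(\bff)])$, which tacitly replaces $e^m_{k+1}$ by $e^m_1$ in $A^1(\cdot)$; this is harmless because the minimum is taken over all $e\in\intent{3,E-1}$, but your version handles the index $k+1$ correctly from the start.
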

\begin{proof}
Let $m\in\N_{\geq 2}$ and $n\in \N_{\geq E}$ be arbitrary. For any $e\in\intent{3,E-1}$, the compact sets belonging to $\{\overline{\clI}_m(\bfb e): \bfb\in \intent{2,n}^{m-1}\}$ are pairwise disjoint. Hence, there is some constant $B_1(n,m;e)>0$ such that for two different $\bfb,\bff\in\intent{2,n}^{m-1}$ we have
\[
d\left(\overline{\clI}_m(\bfb e),\overline{\clI}_m(\bff e)\right) \geq B_1(m,n;e).
\]
Put
\[
B:=B(m,n):=\min\{B_1(m,n;3), \ldots, B_1(m,n;E-1)\}>0. 
\]
For any $k\in\N$, $\bfc\in\intent{2,n}^{(m-1)k}$, and $\bfb,\bff\in\intent{2,n}^{m-1}$ with $\bfb\neq\bff$, we write $\mathbf{g}:=\scB_m(\bfc,\bfe^m)$. Then, we obtain
\begin{align*}
d\left( A^{k+1}(\bfc\bfb),A^{k+1}(\bfc\bff)\right) &= d\left( \scT_{\bfg}^{km}[A^1(\bfb)],\scT_{\bfg}^{km}[A^1(\bff)]\right) \nonumber\\
&=|A^k(\bfc)| \;d\left( A^1(\bfb),A^1(\bff)\right) \nonumber\\
&\geq |A^k(\bfc)| B. \nonumber
\end{align*}
\end{proof}
	
\begin{proposition}\label{PROPO-HJGL-02}
For each $m\in\N_{\geq 2}$, $n\in\N_{\geq E}$, and $k\in\Na$ define
\[
d_k\left( \clA(m,n)\right) := \max\left\{ |A^k(\bfc)|: \bfc\in \intent{2,n}^{k(m-1)}\right\}.
\]
Then, we have
\[
\liminf_{k\to\infty} \frac{\log \left( d_k(\clA(m,n))^{-1}  \right)}{k}>0.
\]
\end{proposition}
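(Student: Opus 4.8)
The plan is to obtain an exponentially decaying upper bound for $d_k(\clA(m,n))$; since the statement only asks for a positive liminf, such a bound finishes the argument at once. The key structural observation is that each $A^k(\bfc)$ is, by construction, the closure of a \emph{fundamental interval of level $mk$}. Indeed, $\bfc\in\intent{2,n}^{k(m-1)}$ has $k(m-1)$ entries and $(e_1^m,\ldots,e_k^m)$ has $k$ entries, so by the definition of the finite shuffle $\scB_m$ in \cref{sec:symbolic} (the case $R=\{jm:j\in\N\}$) the word $\scB_m\bigl(\bfc,(e_1^m,\ldots,e_k^m)\bigr)$ belongs to $\scD^{mk}$, and hence $A^k(\bfc)=\overline{\scI}_{mk}\bigl(\scB_m(\bfc,(e_1^m,\ldots,e_k^m))\bigr)$ is the closure of a fundamental interval of level exactly $mk$.

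With this in hand, I would simply invoke \cref{Co:Luroth-02}: any fundamental interval of level $\ell$ has diameter at most $2^{-\ell}$, and passing to the closure does not change the diameter, so $|A^k(\bfc)|\le 2^{-mk}$ for every admissible $\bfc$. Maximising over $\bfc\in\intent{2,n}^{k(m-1)}$ gives $d_k(\clA(m,n))\le 2^{-mk}$, whence $\log\bigl(d_k(\clA(m,n))^{-1}\bigr)\ge mk\log 2$ and
\[
\liminf_{k\to\infty}\frac{\log\bigl(d_k(\clA(m,n))^{-1}\bigr)}{k}\ \ge\ m\log 2\ >\ 0 .
\]

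There is essentially no obstacle here: the only point that deserves a line of justification is the length count for the shuffled word, which is immediate from the definition of $\scB_m$ for finite sequences. (Should a sharper rate ever be required, one could instead split the product in \cref{Co:Luroth-02} into the $k(m-1)$ factors coming from digits in $\intent{2,n}$ and the $k$ factors coming from the digits $e_j^m\in\intent{3,E-1}$, bounding the latter by $\tfrac16$; but the crude bound $2^{-mk}$ already suffices both for this proposition and for the hypothesis of \cref{Le:GJL01} in which it is used.)
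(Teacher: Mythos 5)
Your proof is correct and follows the same route the paper takes: identify $A^k(\bfc)$ as the closure of a fundamental interval of level $mk$, invoke \cref{Co:Luroth-02} to get $|A^k(\bfc)|\le 2^{-mk}$, and then read off the liminf. The paper's version simply states the bound $d_k(\clA(m,n))\le 2^{-km}$ and says ``some direct computations lead to the desired conclusion''; you have written out those computations, which is fine.
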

\begin{proof}
Let $m,n,k$ be as in the statement and $\bfc\in\intent{2,n}^{k(m-1)}$. Then, $|A^k(\bfc)|\leq 2^{-mk}$ and
\[
d_k\left(\clA(m,n)\right)\leq \frac{1}{2^{km}}.
\]
Direct computations lead to the desired conclusion.
\end{proof}
	
\begin{proposition}\label{PROPO-05}
For each $s\in (0,1)$, there are $m\in\N_{\geq 2}$, $n\in\N_{\geq E}$ such that
\[
\dimh \Lambda[F_m^n(\bfe)]\geq s.
\]
\end{proposition}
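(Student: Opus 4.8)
The plan is to invoke Lemma \ref{Le:GJL01} applied to the family $\clA(m,n)=\bigcup_k \scA_k$ whose limit set is $\bfA_{\infty}(m,n)=\Lambda[F_m^n(\bfe)]$, for a suitable choice of $m$ and $n$ depending on the target exponent $s$. Propositions \ref{PROPO-HJGL-01} and \ref{PROPO-HJGL-02} already supply two of the three hypotheses of Lemma \ref{Le:GJL01}: Proposition \ref{PROPO-HJGL-02} gives $\liminf_k \log(d_k(\clA(m,n))^{-1})/k>0$, and Proposition \ref{PROPO-HJGL-01} produces a separation constant $B_n=B(m,n)$ independent of $k$, so that $\limsup_k \log\log(B_n^{-1})/k=0<1$ holds trivially. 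Thus the only thing left to verify is the summation condition \eqref{EcW1GJL}: for all large $k$ and every $A^k(\bfc)\in\scA_k$,
\[
\sum_{B\in D(A^k(\bfc))} |B|^s \geq |A^k(\bfc)|^s.
\]

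The key computation is to unwind this inequality using Corollary \ref{Co:Luroth-02}. A descendant of $A^k(\bfc)$ is obtained by appending a block $\bfb\in\intent{2,n}^{m-1}$ (the free coordinates) together with the forced digit $e_{k+1}^m\in\intent{3,E-1}$; passing from level $mk$ to level $m(k+1)$ multiplies the length by $\prod_{i=1}^{m-1}\frac{1}{b_i(b_i-1)}\cdot\frac{1}{e_{k+1}^m(e_{k+1}^m-1)}$. Hence \eqref{EcW1GJL} is equivalent to
\[
\frac{1}{e_{k+1}^m(e_{k+1}^m-1)^{s}}\cdot\Big(\sum_{b=2}^{n}\frac{1}{b^s(b-1)^s}\Big)^{m-1}\geq 1,
\]
up to writing it with the correct exponent $s$ on each factor; concretely one needs
\[
\Big(\sum_{b=2}^{n}\frac{1}{b^s(b-1)^s}\Big)^{m-1}\geq \big(e_{k+1}^m(e_{k+1}^m-1)\big)^{s}.
\]
Since $e_{k+1}^m\leq E-1$, the right-hand side is bounded by $(E-1)^{2s}\leq (E-1)^2$, a fixed constant. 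For the left-hand side, recall that for every $s\in(0,1)$ one has $\sum_{b\geq 2} b^{-s}(b-1)^{-s}>\sum_{b\geq 2} b^{-1}(b-1)^{-1}=1$; fix such an $s$, choose $n$ large enough that the partial sum $S_n(s):=\sum_{b=2}^n b^{-s}(b-1)^{-s}$ exceeds $1$, say $S_n(s)\geq 1+\eta$ with $\eta>0$, and then choose $m$ large enough that $(1+\eta)^{m-1}\geq (E-1)^2$. With these choices the summation condition holds for every $k$, so Lemma \ref{Le:GJL01} yields $\dimh\Lambda[F_m^n(\bfe)]=\dimh\bfA_{\infty}(m,n)\geq s$.

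The main obstacle is purely bookkeeping: one must be careful that the forced digits $e_j^m$, which lie in $\intent{3,E-1}$ and therefore contribute a uniformly bounded multiplicative factor to the lengths, do not interfere with the lower bound — they are handled simply by absorbing the constant $(E-1)^2$ into the exponent $m-1$. There is no genuine analytic difficulty beyond the elementary fact that the tail series $\sum_{b\geq 2} b^{-s}(b-1)^{-s}$ diverges away from $1$ as $s\uparrow 1$ is false — rather, it stays strictly above $1$ for all $s<1$ and its partial sums approximate it, which is all that is needed. Once Proposition \ref{PROPO-05} is established, combining it with \eqref{Ec:DL01} and the monotonicity of Hausdorff dimension under unions gives $\dimh\noD(x)\geq s$ for every $s<1$, hence $\dimh\noD(x)=1$, completing the proof of Theorem \ref{TEO-DISTAL-LUROTH}.
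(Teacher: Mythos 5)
Your proposal is correct and follows essentially the same route as the paper: invoke Lemma \ref{Le:GJL01} with the separation and decay hypotheses supplied by Propositions \ref{PROPO-HJGL-01} and \ref{PROPO-HJGL-02}, then verify the summation condition by bounding the forced digit's contribution by a fixed constant (you use $(E-1)^2$, the paper uses $E^2$) and choosing $n$ so that $\sum_{b=2}^n b^{-s}(b-1)^{-s}>1$ and $m$ large enough to overcome that constant. The paper states the two numerical conditions on $m,n$ directly rather than introducing $\eta$, but the computation is the same.
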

\begin{proof}
Take $s\in (0,1)$. Let $m\in\N_{\geq 2}$ and $n\in\N_{\geq E}$ be such that
\[
n\geq E, \quad 
\sum_{j=2}^n \frac{1}{j^s(j-1)^s} >1, \;\text{ and }\; 
\frac{1}{E^2} \left( \sum_{j=2}^n \frac{1}{j^s(j-1)^s} \right)^{m-1}>1.
\]
Let $k\in\N$ and $\bfc\in\intent{2,n}^{k(m-1)}$ be arbitrary. For every $\bfb=(b_1,\ldots, b_{m-1})\in\intent{2,n}^{m-1}$ we have
\[
\left|A^{k+1}(\bfc \bfb) \right| 
= \left| A^k(\bfc)\right| \frac{1}{e_{k+1}^m(e_{k+1}^m-1)} \prod_{j=1}^{m-1} \frac{1}{b_j (b_j-1)} 
\geq |A^k(\bfc)| \frac{1}{E^2}\prod_{j=1}^{m-1} \frac{1}{b_j (b_j-1)}.
\]
Therefore, letting $\bfb$ run along $\intent{2,n}^{m-1}$, we obtain
\begin{align*}
\sum_{\bfb} |A^{k+1}(\bfc\bfb)|^s &\geq |A^k(\bfc)|^s \frac{1}{E^{2s}}\sum_{\bfb} \prod_{j=1}^{m-1} \frac{1}{b_j^s (b_j-1)^s} \\
&= |A^k(\bfc)|^s \frac{1}{E^{2s}}\left(\sum_{j=2}^n \frac{1}{j^s (j-1)^s}\right)^{m-1} \\
&\geq |A^k(\bfc)|^s \frac{1}{E^2} \left( \sum_{j=2}^n \frac{1}{j^s(j-1)^s} \right)^{m-1} > |A^k(\bfc)|^s. \nonumber
\end{align*}
Let $B=B(m,n)$ be as in Proposition \ref{PROPO-HJGL-01}. Taking into account the constant sequence $(B_n)_{n\geq 1}$ with $B_n=B$ and Proposition \ref{PROPO-HJGL-02}, we can apply Lemma \ref{Le:GJL01} to conclude
\[
\dimh \Lambda\left[ F_m^n(\bfe)\right] \geq s. 
\]
\end{proof}
We are now in the position to estimate the Hausdorff dimension of $\noD_{\scL}(x)$. In view of \eqref{Ec:DL01} and Proposition \ref{PROPO-05}, we have
\[
\dimh \left(\bigcup_{\substack{m\geq 2 \\ n\geq E}}\Lambda\left[ F_m^n(\bfe)\right]\right) 
= \sup_{\substack{m\geq 2 \\ n\geq E}} \dimh \left(\Lambda\left[ F_m^n(\bfe)\right]\right) 
=1,
\]
so $\dimh \noD_{\scL}(x)=1$. The density of $\noD_{\scL}(x)$ follows immediately from Proposition \ref{Le:DisC} and the proof of Theorem \ref{TEO-DISTAL-LUROTH} is now complete.

\section{Devaney chaos: proof of Theorem \ref{TEOREMA-DEVCHAOS-LUROTH}}
\label{devaney}

\noindent Let us show $([0,1],\scL)$ is topologically transitive. Let $U,V$ be two non-empty open subsets of $[0,1]$. Pick a non-zero $x\in V$ and write $x=\langle a_1,a_2,a_3,\ldots\rangle$. Take a non-zero $y\in U$, write $y=\langle b_1,b_2,b_3,\ldots \rangle$, and choose $n\in\Na$ such that $\scI_n(\bfb)\subseteq U$ (it exists by Corollary \ref{Co:Luroth-02}). Then, the point 
\[
z=\langle b_1,b_2,\ldots,b_n,a_1,a_2,\ldots\rangle
\]
belongs to $\scI_n(\bfb)\subseteq U$ and $\scL^n(z)=x\in V$; that is, $\scL^n(z)\in \scL^n[U]\cap V$. Using similar ideas, it is not hard to show that $([0,1],\scL)$ is locally eventually onto; that is, for every non-empty open subset $U$ there exists some $n\in\Na$ such that $\scL^n[U]=[0,1]$.

Since $\Lambda\colon \scD^{\Na}\to [0,1]$ is continuous and onto, it maps dense subsets of $\scD^{\Na}$ onto dense subsets of $[0,1]$. In particular, $\Lambda$ maps the set of periodic sequences to a dense subset of $[0,1]$. Moreover, by $\Lambda \circ \sigma = \scL\circ \Lambda$, the function $\Lambda$ maps periodic sequences onto periodic points with respect to $\scL$ (see Proposition \ref{Teo:LuLambda}).

Finally, we show that $([0,1],\scL)$ is sensitive to initial conditions. Let $\delta>0$ be arbitrary and take $x\in [0,1]$. For $x=0$, take $N\in\Na_{\geq 2}$ such that $N^{-1}<\delta$. Then, the point
\[
y = \langle N+1,2,2,2,2,2,\ldots\rangle  = \frac{1}{N}
\]
satisfies $|x-y|<N^{-1}< \delta$ and $|\scL(x)-\scL(y)|=|0-1|=1$. If $x\in (0,1]$, write $x=\langle a_1,a_2,a_3,\ldots\rangle$. Take $n\in\Na$ such that $\scI_n(\bfa)\subseteq B(x;\delta)$ and let $z$ be the only element in the set
\[
\left\{ \scL^n(x) - \frac{1}{2}, \scL^n(x) + \frac{1}{2}\right\} \cap (0,1].
\]
Write $z=\langle c_1,c_2,c_3,\ldots\rangle$ and define 
\[
y=\langle a_1,a_2,\ldots,a_n,c_1,c_2,c_3,\ldots\rangle.
\]
Hence, we have $|x-y|<\delta$ and $|\scL^n(x) - \scL^n(y)|=\frac{1}{2}$. Therefore, the system $([0,1],\scL)$ is Devaney chaotic. 

It is not hard to adapt the argument above to show that $([0,1),\scG)$ is also Devaney chaotic.

\section{A scrambled set: proof of Theorem \ref{TEOREMA-SCRAM-LUROTH-01}}
\label{liyorke}

\noindent The proof is divided into three parts. In the first part, we construct an $\scL$-scrambled set $S$. Afterwards, we show that $S$ is in fact scrambled with respect to $\scL$. Finally, we show that $S$ has Hausdorff dimension $1$.
\subsection{Construction of $S$}
For each $\bfa=\sanu \in\clD^{\N}$, let $g(\bfa)\in\clD^{\N}$ be the limit of the finite words $(\bfA_n)_{n\geq 1}$ given by $\bfA_1=2a_1$ and $\bfA_{n+1}=\bfA_n 2^{n+1}\pref_{n+1}(\bfa)$; that is, 
\[
g(\bfa) = (2,a_1,2,2,a_1,a_2, 2,2,2,a_1,a_2,a_3,\ldots).
\]
Consider $R=\bigcup_{m\in\N} \intent{m^3+1,m^3+2m}$ and define $\clF:\clD^{\N}\to\clD^{\N}$ by
\[
\clF(\bfa)=\scB(\bfa,g(\bfa);R) 
\text{ for all } \bfa\in \clD^{\N}.
\]
Note that both functions $g$ and $\clF$ are continuous. For each $N\in\N_{\geq 3}$ define
\[
\widetilde{S}_N\colon = \clF\left[ \intent{2,N}^{\N}\right]\subseteq \scD^{\Na}, \quad
S_N \colon= \Lambda\left[ \widetilde{S}_N\right]\subseteq (0,1]\]
and put $S\colon=\bigcup_{N\geq 3} S_N$. 
\subsection{The set $S$ is $\scL$-scrambled}
We now show that $S$ is scrambled with respect to $\scL$. In other words, if $\alpha,\beta\in S$ are different, then
\[
\liminf_{n\to\infty} |\scL^n(\alpha) - \scL^n(\beta)|=0 \;\text{ and }\; 
\limsup_{n\to\infty} |\scL^n(\alpha) - \scL^n(\beta)|>0.
\]
Take $N\in\Na_{\geq 3}$. Let $\bfa=\sanu, \bfb=\sabu\in \intent{2,N}^\N$ be such that $\bfa\neq\bfb$. Define
\[
x \colon=\Lambda(\bfa),  \quad
\alpha \colon= \Lambda\circ \clF(\bfa), \quad
y \colon=\Lambda(\bfb), \quad 
\beta \colon= \Lambda\circ \clF(\bfb). 
\]
For each $m\in\N$ we have
\[
\Lu^{m^3}(\alpha) = \langle \underbrace{2,2,\ldots, 2}_{m \text{ times} },a_1,\ldots, a_m,\ldots\rangle \;\text{ and }\;
\Lu^{m^3}(\beta) = \langle \underbrace{2,2,\ldots, 2}_{m \text{ times} },b_1,\ldots, b_m,\ldots\rangle;
\]
hence,
\[
\lim_{m\to\infty}  \Lu^{m^3}(\alpha) = \lim_{m\to\infty}  \Lu^{m^3}(\beta) =1
\]
and
\[
\liminf_{n\to\infty} \left| \Lu^n(\alpha) - \Lu^n(\beta)\right| = 0.
\]
On the other hand, since
\[
\scL^{m^3+m}(\alpha) = \langle a_1,\ldots, a_m, 2, \ldots\rangle \;\text{ and } 
\scL^{m^3+m}(\beta) = \langle b_1,\ldots, b_m,2\ldots\rangle, \nonumber
\]
we have
\[
\lim_{m\to\infty} \scL^{m^3+m}(\alpha) = x \;\text{ and }\;
\lim_{m\to\infty} \scL^{m^3+m}(\beta) = y.
\]
Because $\Lambda$ is bijective, we have $x\neq y$ and 
\[
\limsup_{n\to\infty} \left| \Lu^n(\alpha) - \Lu^n(\beta)\right| 
\geq |x - y|>0.
\]
Therefore, $(\alpha,\beta)$ is an $\scL$-scrambled pair and $S$ is an $\scL$-scrambled set.

\subsection{Hausdorff dimension estimates}
Define $Q:=\N\setminus R$ and write $Q=\{q_n:n\in\Na\}$ with $q_n<q_{n+1}$ for all $n\in\Na$. Let $\pi_{Q}$ be the function
\[
\pi_Q\colon\bigcup_{N\geq 3} \widetilde{S}_N\to \bigcup_{N\geq 3} \intent{2,N}^{\N}
\]
given by
\[
\pi_Q\left(\sanu\right) = (a_{q_j})_{j\geq 1}.
\]
Recall that $F_N\colon=\Lambda[\intent{2,N}]$ for each $N\in\Na_{\geq 3}$. The map $\scY\colon=\clF^{-1}$,
\[
\scY\colon \bigcup_{N\geq 3} \widetilde{S}_N\to \bigcup_{N\geq 3} F_N,
\]
is given by $\scY = \Lambda\circ\pi_Q\circ \Lambda^{-1}$ and is continuous. Moreover, for each $N\in\Na_{\geq 3}$, the restriction of $\scY$ to $S_N$ establishes a homeomorphism between $S_N$ and $F_N$ which is, by Lemma \ref{Lem-Rev-01} below, locally Hölder-continuous.

\begin{lemma}\label{Lem-Rev-01}
Given $N\in\N_{\geq 3}$ and $\veps>0$, there exists $r=r(N,\veps)>0$ such that for any $\beta\in S_N$ and any $\alpha\in S_N\cap B(\beta;r)$
\[
\left| \scY(\alpha) - \scY(\beta) \right| \ll_N |\alpha - \beta|^{\frac{1}{1+\veps}}.
\]
\end{lemma}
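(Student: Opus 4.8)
The plan is to understand both distances, $|\alpha-\beta|$ and $|\scY(\alpha)-\scY(\beta)|$, purely in terms of the length of the longest common prefix of the Lüroth codings, and then to compare. Write $\alpha=\Lambda(\clF(\bfa))$, $\beta=\Lambda(\clF(\bfb))$ with $\bfa,\bfb\in\intent{2,N}^{\N}$, so that $\scY(\alpha)=\Lambda(\bfa)$, $\scY(\beta)=\Lambda(\bfb)$. Let $k=k(\bfa,\bfb)$ be the largest index with $\pref(\bfa,k)=\pref(\bfb,k)$ (taking $r$ small forces $k\geq 1$, since $\alpha$ and $\beta$ agree on a long prefix of their $\clF$-codings, and $\clF$ is injective with a prefix-monotone behaviour coming from the shuffle rule). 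The key point is that, because the $g(\bfa)$-part of $\clF(\bfa)$ depends only on $\bfa$ and is interleaved in the fixed positions $R$, agreement of $\bfa$ and $\bfb$ on a prefix of length $k$ is \emph{equivalent} to agreement of $\clF(\bfa)$ and $\clF(\bfb)$ on a prefix of some length $\ell=\ell(k)$, and the first place where they differ is at a position in $Q$. So if $q_{k}\leq\ell<q_{k+1}$ denotes the relevant cutoff, then $\alpha,\beta$ lie in the same fundamental interval of level $\ell$ but in different fundamental intervals of level $\ell+1$, while $\scY(\alpha),\scY(\beta)$ lie in the same fundamental interval of level $k$ but not of level $k+1$.

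\medskip

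Next I would turn these prefix statements into quantitative bounds via Corollary \ref{Co:Luroth-02}. On one hand, $\scY(\alpha)$ and $\scY(\beta)$ both lie in $\scI_{k}(\pref(\bfa,k))$, whose length is $\prod_{j=1}^{k}\frac{1}{a_j(a_j-1)}$; since $a_j\in\intent{2,N}$, this is at most $2^{-k}$, so $|\scY(\alpha)-\scY(\beta)|\leq 2^{-k}$. On the other hand, $\alpha$ and $\beta$ lie in different level-$(\ell+1)$ fundamental intervals, and using that all digits of $\clF(\bfa)$ are bounded (the $\bfa$-digits by $N$, the $g(\bfa)$-digits by $\max\{N,2\}=N$, and the literal $2$'s by $2$), the two points cannot be closer than a fixed power of the length of the ambient level-$\ell$ interval; concretely one gets a lower bound of the form $|\alpha-\beta|\geq c_N\,|\scI_{\ell}|^{C}$ for constants depending only on $N$, hence $|\alpha-\beta|\geq c_N\, N^{-C\ell}$ for a fixed $C$. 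Combining, $|\scY(\alpha)-\scY(\beta)|\leq 2^{-k}$ and $|\alpha-\beta|\geq c_N N^{-C\ell}$, so it remains to compare $2^{-k}$ with a power of $N^{-C\ell}$.

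\medskip

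The crux is therefore to bound $\ell$ linearly in $k$, with a constant that can be made arbitrarily close to $1$ after taking $k$ large (equivalently, $r$ small). Here the specific choice $R=\bigcup_{m}\intent{m^3+1,m^3+2m}$ is essential: both $R$ and its complement $Q$ have \emph{density one at infinity} in the sense that $q_k/k\to 1$ and, dually, the position $\ell$ in $\clF(\bfa)$ corresponding to the $k$-th $Q$-slot satisfies $\ell=q_k=k+o(k)$, because up to $q_k$ only $O(k^{2/3})$ indices belong to $R$. More precisely, $q_k = k + \#\{\text{$R$-indices} \le q_k\} = k + O(k^{2/3})$, so $\ell \le (1+\veps)k$ once $k$ is large enough (say $k\ge k_0(\veps)$), which we arrange by shrinking $r=r(N,\veps)$ so that $B(\beta;r)\cap S_N$ only contains points agreeing with $\beta$ on a $\clF$-prefix long enough to force $k\ge k_0(\veps)$. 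Then $|\alpha-\beta|\geq c_N N^{-C(1+\veps)k}$, i.e. $k \le \frac{\log(c_N/|\alpha-\beta|)}{C(1+\veps)\log N}$, whence $|\scY(\alpha)-\scY(\beta)|\le 2^{-k}\ll_N |\alpha-\beta|^{\theta}$ for an exponent $\theta$ that, absorbing the fixed constants $C$ and $\log N$ into the implied constant and re-choosing $\veps$ at the outset, can be taken to be $\tfrac{1}{1+\veps}$. The main obstacle is precisely this bookkeeping: one has to track carefully how the block structure of $\clF$ converts a level-$k$ prefix of $\bfa$ into a level-$\ell$ prefix of $\clF(\bfa)$ (including the fact that the first disagreement lands in $Q$, not in an $R$-block of literal $2$'s or of $g$-digits), and to verify that the cubic spacing of the $R$-blocks indeed yields $\ell/k\to 1$; the Hausdorff-dimension payoff in the next step relies on this exponent tending to $1$.
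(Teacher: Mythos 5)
Your overall plan --- compare the lengths of the longest common digit-prefix on the two sides and exploit the sparsity $t(\ell)/\ell\to 0$ of $R$ --- is the same idea as the paper's, but the quantitative bookkeeping does not produce the stated exponent. When you replace $|\scI_k(\pref(\bfa,k))|$ by the uniform bound $2^{-k}$ and $|\scI_\ell|$ by the uniform bound $N^{-C\ell}$, you decouple the two sides. With the sign corrected (the inequality $|\alpha-\beta|\geq c_N N^{-C(1+\veps)k}$ gives $k\geq \frac{\log(c_N/|\alpha-\beta|)}{C(1+\veps)\log N}$, not $k\leq\cdots$), you arrive at $|\scY(\alpha)-\scY(\beta)|\leq 2^{-k}\ll_N|\alpha-\beta|^{\theta}$ with $\theta=\frac{\log 2}{C(1+\veps)\log N}$. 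For $N\geq 3$ and $C\geq 2$ this is at most $\frac{\log 2}{2\log 3}<\frac13$ regardless of how small $\veps$ is, so it cannot be pushed to $\frac{1}{1+\veps}$. The suggestion to ``absorb the fixed constants $C$ and $\log N$ into the implied constant'' does not help: a multiplicative implied constant cannot change a H\"older exponent, and the lemma needs the exponent to be $\frac{1}{1+\veps}$, arbitrarily close to $1$.

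The repair is to compare the two cylinder diameters exactly rather than through uniform bounds. The $Q$-positions in $\pref(\clF(\bfb),\ell)$ carry precisely $\pref(\bfb,k)$ with $k=\ell-t(\ell)$, so Corollary \ref{Co:Luroth-02} gives
\[
\bigl|\scI_{\ell}\bigl(\pref(\clF(\bfb),\ell)\bigr)\bigr|
=\bigl|\scI_{k}(\pref(\bfb,k))\bigr|\prod_{j\in R\cap\intent{1,\ell}}\frac{1}{(\clF(\bfb))_j\bigl((\clF(\bfb))_j-1\bigr)},
\]
and the $R$-product lies between $N^{-2t(\ell)}$ and $2^{-t(\ell)}$. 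Hence $|\scI_{k}(\pref(\bfb,k))|\leq N^{2t(\ell)}\,|\scI_{\ell}(\pref(\clF(\bfb),\ell))|$, and the only loss to absorb is the factor $N^{2t(\ell)}$; since $|\scI_{k}(\pref(\bfb,k))|\leq 2^{-(\ell-t(\ell))}$, this factor is dominated by the $(1+\veps)$-th power as soon as $\ell$ is large enough that $2t(\ell)\log N\leq \veps(\ell-t(\ell))\log 2$, which Proposition \ref{Lem-Rev-02} guarantees and which shrinking $r$ enforces. Combined with Proposition \ref{Lem-Rev-03}, which gives $|\alpha-\beta|\geq N^{-3}|\scI_\ell(\pref(\clF(\bfb),\ell))|$, one obtains the H\"older exponent $\frac{1}{1+\veps}$ exactly. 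A further simplification in the paper: it works directly with $n=$ length of the common prefix of the $\clF$-codings and with the $Q$-count $n-t(n)$, so it never needs your claim that the first disagreement between $\clF(\bfa)$ and $\clF(\bfb)$ always lands at a $Q$-position --- a claim which is in fact true given the cubic spacing of the $R$-blocks, but which your proposal asserts without verification.
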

We need to two preliminary results for the proof of Lemma \ref{Lem-Rev-01}.
\begin{proposition}\label{Lem-Rev-02}
If $t(n)\colon=\#(R\cap\intent{1,n})$ for every $n\in\N$, then $t(n)\asymp n^{\frac{2}{3}}$.
\end{proposition}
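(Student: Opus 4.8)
The plan is to exploit the explicit block structure of $R$. Write $R=\bigcup_{m\geq 1} B_m$ with $B_m:=\intent{m^3+1,\,m^3+2m}$, a block of $|B_m|=2m$ consecutive integers sitting just above $m^3$. First I would record the elementary cubic estimate $m^3+2m\leq (m+1)^3-1$, valid for every $m\geq 1$: it guarantees that the blocks $B_m$ are pairwise disjoint and that each $B_m$ is entirely contained in the gap $\intent{m^3+1,\,(m+1)^3-1}$; in particular $B_m$ finishes strictly before $(m+1)^3$.

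Next, given $n$, I would set $M:=\lfloor n^{1/3}\rfloor$, so that $M^3\leq n<(M+1)^3$. For the lower bound on $t(n)$: by the estimate above, every block $B_m$ with $m\leq M-1$ ends at or before $M^3-1<M^3\leq n$, hence lies inside $\intent{1,n}$, and therefore $t(n)\geq\sum_{m=1}^{M-1}2m=M(M-1)$. For the upper bound: every block $B_m$ with $m\geq M+1$ begins at $m^3+1\geq(M+1)^3+1>n$, hence misses $\intent{1,n}$ entirely; so only $B_1,\dots,B_M$ can contribute and $t(n)\leq\sum_{m=1}^{M}2m=M(M+1)$. Combining gives $M(M-1)\leq t(n)\leq M(M+1)$, i.e. $t(n)\asymp M^2$.

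Finally, the defining inequalities $M\leq n^{1/3}<M+1$ yield $M\asymp n^{1/3}$ (for $n$ large, say $n\geq 8$, one has $M\geq 2$ and $M>n^{1/3}-1\geq\tfrac12 n^{1/3}$; the finitely many small values of $n$ are absorbed into the implied constants), and squaring this gives $t(n)\asymp n^{2/3}$, as claimed. I do not expect a genuine obstacle here — the only thing requiring care is the bookkeeping at the two edges, namely confirming that the last fully included block has index $M-1$ and that $B_M$ is the last block that can meet $\intent{1,n}$, and both of these follow immediately from the cubic gap estimate of the first step.
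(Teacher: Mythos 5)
Your argument is correct and follows essentially the same route as the paper: set $m=\lfloor n^{1/3}\rfloor$, bound $t(n)$ above by $\sum_{j=1}^m 2j=m(m+1)$ using that no block $B_j$ with $j\geq m+1$ meets $\intent{1,n}$, bound it below by $\sum_{j=1}^{m-1}2j=(m-1)m$ using that the blocks $B_1,\dots,B_{m-1}$ are fully contained in $\intent{1,n}$, and then translate $m\asymp n^{1/3}$ into $t(n)\asymp n^{2/3}$. The paper is just slightly more explicit about the numerical constants (e.g.\ $t(n)\geq\tfrac{2}{9}n^{2/3}$ for $n\geq 8$), whereas you absorb them into the $\asymp$; both treatments are fine.
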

\begin{proof}
Take $n\in\N_{\geq 8}$ and define $m=[ n^{\frac{1}{3}}]$. Then, $2\leq m\leq n^{\frac{1}{3}}< m+1$, so
\begin{align*}
t(n) &\leq 2\sum_{j=1}^m j = m(m+1) = m^2\left( 1+ \frac{1}{m}\right) \leq 2n^{\frac{2}{3}}, \text{ and } \nonumber\\
t(n) &\geq 2\sum_{j=1}^{m-1} j = (m-1)m = (m+1)^2 \left( 1- \frac{1}{m+1}\right)\left( 1- \frac{2}{m+1}\right) \geq \frac{2}{9} n^{\frac{2}{3}}. \nonumber
\end{align*}
\end{proof}

\begin{proposition}\label{Lem-Rev-03}
Let $\gamma=\left\langle c_1,c_2,c_3,\ldots\right\rangle$, $\delta=\left\langle h_1,h_2,h_3,\ldots\right\rangle$ belong to $F_N$ for some $N\in\N_{\geq 3}$ (see \eqref{Eq:DefFN}). If $n_0\in\N$ is such that $c_j=h_j$ for each $j\in\intent{1,n_0}$ and $c_{n_0+1}\neq h_{n_0+1}$, then 
\[
\left| \gamma - \delta\right| 
\geq \left|\scI_{n_0}(c_1,\ldots,c_{n_0}) \right| \frac{1}{N^3}.
\]
\end{proposition}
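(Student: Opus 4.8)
The plan is to get a \emph{lower} bound for $|\gamma-\delta|$ directly from the structure of the fundamental intervals, exploiting that $\gamma$ and $\delta$ agree on the first $n_0$ digits but differ at position $n_0+1$. Since $\gamma,\delta\in S_N$, both lie in the fundamental interval $\scI_{n_0}(c_1,\ldots,c_{n_0})$, and the digit at position $n_0+1$ is either a digit coming from the ``$g(\bfa)$-part'' (which lies in a bounded range, in fact equals $2$ or a power of $2$ or some $a_i\in\intent{2,N}$) or from the ``$\bfa$-part'' (which lies in $\intent{2,N}$). The key point is that in either case the relevant digits live in $\intent{2,N^3}$ or so — more precisely, every digit appearing in an element of $\widetilde S_N$ is at most $\max(N,2^{m})$ for the appropriate block index $m$, and one has to check that at the position $n_0+1$ the digit is bounded by something like $N^3$. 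Actually the cleanest route is: whatever $c_{n_0+1}$ and $h_{n_0+1}$ are, they are both at most $N^3$ (this needs a short combinatorial check using the definition of $g$ and the block lengths $m^3+2m$, which for the blocks that matter are not too long), hence, being distinct, $|\gamma-\delta|\geq d\bigl(\scI_{n_0+1}(c_1,\ldots,c_{n_0+1}),\scI_{n_0+1}(h_1,\ldots,h_{n_0+1})\bigr)$, and adjacent fundamental subintervals of $\scI_{n_0}(c_1,\ldots,c_{n_0})$ have relative length $\gg 1/N^3$.

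First I would record that $\gamma$ and $\delta$ both belong to $\scI_{n_0}(c_1,\ldots,c_{n_0})$, so that $|\gamma-\delta|$ can be estimated inside this interval. Next I would split according to the position $n_0+1$ modulo the $R$-shuffle pattern $R=\bigcup_m\intent{m^3+1,m^3+2m}$: if $n_0+1\in Q=\N\setminus R$ then $c_{n_0+1},h_{n_0+1}\in\intent{2,N}$ directly, while if $n_0+1\in R$ then $c_{n_0+1},h_{n_0+1}$ are entries of $g(\bfa)$ resp. $g(\bfb)$, which by the explicit form $g(\bfa)=(2,a_1,2,2,a_1,a_2,\ldots)$ are either $2$ or some $a_i\in\intent{2,N}$; in particular in all cases $c_{n_0+1},h_{n_0+1}\leq N$. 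Then I would invoke Lemma \ref{Le:Luroth-01}(i): since the two length-$(n_0+1)$ fundamental intervals are distinct subintervals of $\scI_{n_0}(c_1,\ldots,c_{n_0})$ and $\gamma,\delta$ lie in them respectively, $|\gamma-\delta|$ is at least the distance between them, which (being subintervals indexed by digits in $\intent{2,N}$, and at worst adjacent) is bounded below by the length of the smallest such subinterval, namely $|\scI_{n_0}(c_1,\ldots,c_{n_0})|\cdot\frac{1}{N(N-1)}\gg |\scI_{n_0}(c_1,\ldots,c_{n_0})|\cdot N^{-3}$ by Corollary \ref{Co:Luroth-02}. That would finish the proof.

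The main obstacle is the bookkeeping at position $n_0+1$: one must be careful to identify exactly \emph{which} of the two interleaved sequences contributes the digit there, and to confirm the uniform bound ($\leq N$, or at worst $\leq N^3$) on the possible digit values at that position — in particular the $g(\bfa)$-entries are harmless because they are $2$'s or entries of $\bfa$, both in $\intent{2,N}$. A secondary subtlety is whether the two length-$(n_0+1)$ intervals could be separated by other fundamental intervals (in which case the bound is even easier) versus adjacent (the worst case), so it is enough to just bound below by the length of the relevant sibling interval; here the crude estimate from Corollary \ref{Co:Luroth-02}, $|\scI_{n_0+1}(\cdot)|=|\scI_{n_0}(\cdot)|/(c_{n_0+1}(c_{n_0+1}-1))\geq |\scI_{n_0}(\cdot)|/N^2\geq |\scI_{n_0}(\cdot)|/N^3$, does the job with room to spare. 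I expect the whole argument to be short once the digit-range observation is nailed down.
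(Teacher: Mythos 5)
Your proposal contains a genuine gap at the crucial step. You reduce to bounding $d\bigl(\scI_{n_0+1}(c_1,\ldots,c_{n_0+1}),\,\scI_{n_0+1}(h_1,\ldots,h_{n_0+1})\bigr)$ from below, identify the worst case as ``adjacent'' intervals (say $c_{n_0+1}=h_{n_0+1}+1$), and assert that the distance is then bounded below by the length of a sibling fundamental interval. This is false: two adjacent level-$(n_0+1)$ fundamental intervals inside $\scI_{n_0}(c_1,\ldots,c_{n_0})$ abut --- one is of the form $(\cdot,p]$ and the other $(p,\cdot]$ for a common endpoint $p$ --- so the distance between them is $0$, and points $\gamma,\delta$ in them can be arbitrarily close. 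Knowing only that the digits at position $n_0+1$ differ and lie in $\intent{2,N}$ gives no positive lower bound on $|\gamma-\delta|$ at all.

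The missing ingredient, and the point of the paper's proof, is that membership in $S_N$ constrains \emph{all} subsequent digits, not just the $(n_0+1)$-th. Assuming WLOG $c_{n_0+1}>h_{n_0+1}$, one has $\scL^{n_0}(\gamma)\leq \tfrac{1}{c_{n_0+1}-1}\leq \tfrac{1}{h_{n_0+1}}$, and, crucially, Lemma \ref{Le:Luroth-01}($ii$) together with $h_{n_0+2}\leq N$ gives $\scL^{n_0+1}(\delta)>\tfrac{1}{N}$, whence
\[
\scL^{n_0}(\delta)
= \frac{1}{h_{n_0+1}}+\frac{\scL^{n_0+1}(\delta)}{h_{n_0+1}(h_{n_0+1}-1)}
> \frac{1}{h_{n_0+1}}+\frac{1}{h_{n_0+1}(h_{n_0+1}-1)N}.
\]
This forces $\scL^{n_0}(\delta)$ to stay a definite amount above the shared endpoint $\tfrac{1}{h_{n_0+1}}$, and taking the difference yields $\scL^{n_0}(\delta)-\scL^{n_0}(\gamma)\geq \tfrac{1}{N^2(N-1)}>\tfrac{1}{N^3}$; multiplying by $|\scI_{n_0}(c_1,\ldots,c_{n_0})|$ (via the linear contraction $\scT^{n_0}_{\bfc}$) finishes the proof. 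Your digit-range observation (all digits of elements of $S_N$ lie in $\intent{2,N}$) is correct and necessary, but you must apply it at position $n_0+2$, not just $n_0+1$, and reason about where $\scL^{n_0}(\gamma),\scL^{n_0}(\delta)$ sit within their level-$(n_0+1)$ intervals rather than about the intervals alone.
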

\begin{proof}
Without loss of generality, assume that $c_{n_0+1}>h_{n_0+1}$. From this inequality,
\[
\scL^{n_0}(\gamma) \leq \frac{1}{c_{n_0+1}-1} \;\text{, and }\;
\frac{1}{h_{n_0+1}} + \frac{1}{h_{n_0+1}(h_{n_0+1}-1) N} < \scL^{n_0}(\delta),
\]
we obtain
\[
\left| \scL^{n_0} (\gamma) -\scL^{n_0} (\delta)\right| 
=\scL^{n_0}(\delta) - \scL^{n_0}(\gamma)
\geq \frac{1}{N^2(N-1)}> \frac{1}{N^3}.
\]
Writing $\bfc=(c_j)_{j\geq 1}$, we conclude
\begin{align*}
|\gamma - \delta| &= \left| \scT^{n_0}_{\bfc}\left( \scL^{n_0}(\gamma)\right) - \scT^{n_0}_{\bfc}\left( \scL^{n_0}(\delta)\right)\right| \\
&= \left| \scI_{n_0}(\bfc)\right| \left| \scL^{n_0}(\gamma) - \scL^{n_0}(\delta)\right| \geq \left| \scI_{n_0}(\bfc)\right| \frac{1}{N^3}.
\end{align*}
\end{proof}
\begin{proof}[Proof of Lemma \ref{Lem-Rev-01}]
Assume that $\veps>0$ and $N\in\N_{\geq 3}$ are given and let $n_0\in\N$ satisfy
\begin{equation}\label{Eq:DemLem61-01}
\frac{n}{t(n)} \geq 1+ \frac{2\log N}{\veps \log 2} \text{ for all } n\in\N_{\geq n_0}
\end{equation}
(it exists by Proposition \ref{Lem-Rev-02}). Take $\bfb\in\widetilde{S}_N$ and put $\beta=\Lambda(\bfb)$. Let $r>0$ be such that the first $n_0$ Lüroth digits of every number in $S_N\cap B(\beta;r)$ coincide with those of $\beta$. It is not hard to show that such an $r$ actually exists since $S_N$ can be expressed as a countable intersection of a nested sequence compact sets, each of which is defined by restricting the first L\"uroth digits. Furthermore, $r$ can be chosen independently of $\beta$ by Corollary \ref{Co:Luroth-02}. Let $\gamma=\left\langle c_1,c_2,c_3,\ldots\right\rangle\neq \beta$ belong to $S_N\cap B(\beta;r)$, write $\bfc=(c_n)_{n\geq 1}$, and call $n\in\N$ the largest natural number satisfying $\pref(\bfc;n)=\pref(\bfb;n)$. Then, we have
\begin{align*}
|\scI_n(b_1,\ldots, b_n)| &= \prod_{j=1}^n \frac{1}{b_j(b_j-1)} \nonumber\\
&= \left( \prod_{j\in Q\cap\intent{1,n}} \frac{1}{b_j(b_j-1)} \right)\left( \prod_{j\in R\cap\intent{1,n}} \frac{1}{b_j(b_j-1)} \right) \nonumber\\
&\geq \left( \prod_{j\in Q\cap\intent{1,n}} \frac{1}{b_j(b_j-1)} \right) \frac{1}{N^{2t(n)}}\nonumber\\
&= |\scI_{n-t(n)} (\pi_Q(\bfb))| \frac{1}{N^{2t(n)}} \geq \left|\scI_{n-t(n)} (\pi_Q(\bfb))\right|^{1+\veps}.
\end{align*}
The last inequality holds by \eqref{Eq:DemLem61-01} and $|\scI_{n-t(n)}(\pi_Q(\bfb))|\leq 2^{-(n-t(n))}$. Hence, in view of Proposition \ref{Lem-Rev-03}, we conclude that
\[
\left| \scY_N(\gamma) - \scY_N(\delta)\right| 
\leq \left| \scI_{n-t(n)} \left( \pi_Q(\bfb)\right) \right| 
\leq \left| \scI_{n} (\bfb) \right|^{\frac{1}{1+\veps}} 
\leq \left| \scI_{n_0} (\bfb) \right|^{\frac{1}{1+\veps}} 
\ll_N |\gamma - \delta|^{\frac{1}{1+\veps}}.
\]
\end{proof}

\subsection{Proof of Theorem \ref{TEOREMA-SCRAM-LUROTH-01}}
Let $N\in\Na_{\geq 3}$ be arbitrary. Given $\veps>0$, take $r>0$ as in Lemma \ref{Lem-Rev-01} and let $x_1,\ldots,x_n$ be points in the compact set $S_N$ such that
\begin{equation}\label{Eq:TeoSL01}
S_N\subseteq \bigcup_{j=1}^n B(x_j;r).
\end{equation}
Lemma \ref{Lem-Rev-01} guarantees that, for each $j\in\intent{1,n}$, the function $\scY$ restricted to $B(x_0;r)\cap S_N$ is $(1+\veps)^{-1}$-Hölder continuous, so
\[
(1+\veps)^{-1} \dimh \left(\scY\left[ B(x_j;r)\cap S_N\right]\right) 
\leq 
\dimh \left( B(x_j;r)\cap S_N \right)
\]
(see Proposition 3.3 in \cite{Fal2014}). Then, by \eqref{Eq:TeoSL01} and $\scY[S_N]=F_N$, 
\begin{align*}
\dimh S_N &= \dimh \left(\bigcup_{j=1}^n B(x_j;r)\cap S_N \right) \\ 
&= \max_{1\leq j\leq n} \dimh \left(B(x_j;r)\cap S_N \right) \\
&\geq (1+\veps)^{-1} \max_{1\leq j\leq n} \dimh \left(\scY\left[B(x_j;r)\cap S_N\right]\right) \\
&= (1+\veps)^{-1} \dimh\left( \bigcup_{j=1}^n \scY\left[ B(x_0;r)\cap S_N\right]\right) \\ 
&=(1+\veps)^{-1} \dimh\left( \scY\left[S_N\right] \right)= (1+\veps)\dimh (F_N).
\end{align*}
Since $\veps>0$ was arbitrary and $S_N\subseteq F_N$, we have $\dimh (S_N)=\dimh (F_N)$ and Theorem \ref{TEOREMA-SCRAM-LUROTH-01} follows:
\[
\dimh (S) 
= \sup_{N} \dimh (S_N) 
= \lim_{N\to\infty} \dimh (S_N)
= \lim_{N\to\infty} \dimh (F_N) = 1. 
\]

\section{Further problems}
\label{problems}

\noindent Theorem \ref{TEO-ASYM-LUROTH} provides uncountably many numbers $x\in [0,1]$ such that $\dimh \AS_{\scL}(x)=0$ and uncountably many $x\in [0,1]$ such that $\dimh \AS_{\scL}(x)=\frac{1}{2}$. It also shows that $\dimh \AS_{\scL}(x)$ lies between $0$ and $\frac{1}{2}$ for any $x\in [0,1]$. However, it does not say what happens between these values. These observations prompt new questions.

\begin{problem}
What is the image of the function $x\mapsto \dimh \AS_{\scL}(x)$?
\end{problem}

Define
\begin{align*}
\Theta(\scL;0) &\colon= \left\{ x\in [0,1]\colon \dimh \textstyle\AS_{\scL}(x)=0 \displaystyle\right\}, \\
\Theta\left(\scL;\frac{1}{2}\right) &\colon= \left\{ x\in [0,1]\colon \dimh \textstyle\AS_{\scL}(x)\displaystyle=\frac{1}{2}\right\}.
\end{align*}
By Theorem \ref{TEO-ASYM-LUROTH}, $\Theta(\scL;0)$ contains the set of numbers in $[0,1]$ whose Lüroth expansion has bounded digits, hence 
\[
\dimh \Theta(\scL;0) = 1.
\]
However, Theorem \ref{TEO-ASYM-LUROTH} only allows us to conclude that $\dimh \Theta(\scL,\frac{1}{2})\geq \frac{1}{2}$, for it contains all the real numbers in $[0,1]$ whose Lüroth digits tend to infinity.
\begin{problem}
What is the Hausdorff dimension of $\Theta(\scL,\frac{1}{2})$?
\end{problem}

We can refine the previous problems if $x\mapsto \dimh \AS_{\scL}(x)$ takes more than two values.
\begin{problem}
Estimate for each $\alpha\in [0,\frac{1}{2}]$ the Hausdorff dimension of the set
\[
\Theta(\scL;\alpha) \colon= \{x\in [0,1]\colon \dimh \textstyle\AS_{\scL} \displaystyle(x)=\alpha\}.
\]
\end{problem}

Recall that Theorems \ref{TEO-ASYM-LUROTH}, \ref{TEO-DISTAL-LUROTH}, and \ref{TEOREMA-SCRAM-LUROTH-01} are full analogues of the main results in \cite{LiuLi2017}, \cite{LiuWan2019}. It is thus natural to pose problems above for $([0,1),\scG)$.

\begin{problem}
What is the image of the function $x\mapsto \dimh\AS_{\scG}(x)$?
\end{problem}
Consider the sets
\begin{align*}
\Theta(\scG;0) &\colon= \left\{ x\in [0,1)\colon \dimh \textstyle\AS_{\scG}\displaystyle(x)=0\right\}, \\
\Theta\left(\scG;\frac{1}{2}\right) &\colon= \left\{ x\in [0,1)\colon \dimh \textstyle\AS_{\scG}\displaystyle(x)=\frac{1}{2}\right\}.
\end{align*}
Since the set of badly approximable numbers-- that is, those real numbers whose regular continued fraction is bounded-- has full Hausdorff dimension (see \ref{Eq:IntroJarnik}), Theorem \ref{thm:sumarised} implies that $\dimh \Theta(\scG;0) =1$. On the other hand, by Good's Theorem \eqref{Eq:IntroGood} and Theorem \ref{thm:sumarised}, we know that $\dimh \Theta(\scG;\frac{1}{2}) \geq  \frac{1}{2}$. 
\begin{problem}
What is the Hausdorff dimension of $\Theta(\scG;\frac{1}{2})$?
\end{problem}
\begin{problem}
For each each $\alpha\in [0,\frac{1}{2}]$, estimate the Hausdorff dimension of the set
\[
\Theta(\scG;\alpha) \colon= \{x\in [0,1)\colon \dimh \textstyle\AS_{\scG}\displaystyle(x) =\alpha\}.
\]
\end{problem}

\noindent 

\section*{Acknowledgements}
\noindent Research by R. Alcaraz Barrera was funded by CONACYT-M\'exico Award 740785 through the program ``Programa de retenci\'on y repatriaci\'on''. Research by G. González Robert was partially funded by CONACYT-Mexico through the program ``Estancias posdoctorales por México. Modalidad 1: Estancia posdoctoral académica''. The paper was finished during R. Alcaraz Barrera's visit to the National Autonomous University of Mexico during October 2021. The visit was sponsored by DGAPA-PAPIIT no. 110221 ``Sistemas Din\'amicos Simb\'olicos y Combinatoria Anal\'itica, II''. We are grateful to Ricardo G\'omez Aiza for his hospitality. We thank the referees for their comments.

\bibliography{luroth}
\bibliographystyle{plain}
\end{document}